\theoremstyle{plain}
\newtheorem{thm}{Theorem}[section]
\newtheorem{claim}[thm]{Claim}
\newtheorem{corollary}[thm]{Corollary}
\newtheorem{definition}[thm]{Definition}
\newtheorem{example}[thm]{Example}
\newtheorem{lemma}[thm]{Lemma}
\newtheorem{problem}[thm]{Problem}
\newtheorem{proposition}[thm]{Proposition}
\newtheorem{theorem}[thm]{Theorem}
\newtheorem{defn}[thm]{Definition}
\numberwithin{equation}{section}
\newcommand{\N}{\mathbb{N}}
\newcommand{\R}{\mathbb{R}}
\newcommand{\Rn}{\mathbb{R}^n}
\DeclareMathOperator{\lip}{Lip\,\!}
\begin{document}

\title[Global geometry and $C^1$ convex extensions of $1$-jets]{Global geometry and $C^1$ convex extensions of $1$-jets}
\author{Daniel Azagra}
\address{ICMAT (CSIC-UAM-UC3-UCM), Departamento de An{\'a}lisis Matem{\'a}tico,
Facultad Ciencias Matem{\'a}ticas, Universidad Complutense, 28040, Madrid, Spain }
\email{azagra@mat.ucm.es}

\author{Carlos Mudarra}
\address{ICMAT (CSIC-UAM-UC3-UCM), Calle Nicol\'as Cabrera 13-15.
28049 Madrid, Spain}
\email{carlos.mudarra@icmat.es}

\date{June 29, 2017}

\keywords{convex function, $C^{1}$ function, Whitney extension theorem, global differential geometry}

\thanks{D. Azagra was partially supported by Ministerio de Educaci\'on, Cultura y Deporte, Programa Estatal de Promoci\'on del Talento y su Empleabilidad en I+D+i, Subprograma Estatal de Movilidad. C. Mudarra was supported by Programa Internacional de Doctorado Fundaci\'on La Caixa--Severo Ochoa. Both authors partially suported by grant MTM2015-65825-P}

\subjclass[2010]{26B05, 26B25, 52A20}

\begin{abstract}
Let $E$ be an arbitrary subset of $\R^n$ (not necessarily bounded), and $f:E\to\R$, $G:E\to\R^n$ be functions. We provide necessary and sufficient conditions for the $1$-jet $(f,G)$ to have an extension $(F, \nabla F)$ with $F:\R^n\to\R$ convex and $C^{1}$. Besides, if $G$ is bounded we can take $F$ so that $\textrm{Lip}(F)\lesssim \|G\|_{\infty}$. As an application we also solve a similar problem about finding convex hypersurfaces of class $C^1$ with prescribed normals at the points of an arbitrary subset of $\R^n$.
\end{abstract}

\maketitle

\section{Introduction and main results}

This paper concerns the following problem.

\begin{problem}\label{main problem}
Given $\mathcal{C}$ a differentiability class in $\R^n$, $E$ a subset of $\R^n$, and functions $f:E\to\R$ and $G:E\to\R^n$ , how can we decide whether there is a {\em convex} function $F\in\mathcal{C}$ such that $F(x)=f(x)$ and $\nabla F(x)=G(x)$ for all $x\in E$? 
\end{problem}

This is a natural question which we could solve in \cite{AzagraMudarra2017PLMS} in the case that $\mathcal{C}=C^{1, \omega}(\R^n)$, where $\omega: [0, \infty)\to [0, \infty)$ is a (strictly increasing and concave) modulus of continuity. A necessary and sufficient condition is that there exists a constant $M>0$ such that
$$
f(x) \geq f(y)+ \langle G(y), x-y \rangle + | G(x)-G(y)| \omega^{-1}\left( \frac{1}{2M} |G(x)-G(y)| \right) \,
\textrm{ for all } x, y\in E. \eqno(CW^{1,\omega})
$$
Very recently, some explicit formulas for such extensions have been found in \cite{DaniilidisLeGruyerEtAl} for the $C^{1,1}$ case, and more generally in \cite{AzagraMudarraExplicitFormulas} for the $C^{1, \omega}$ case when $\omega$ is a modulus of continuity with the additional property that $\omega(\infty)=\infty$; in particular this includes all the H\"older differentiability classes $C^{1, \alpha}$ with $\alpha\in (0, 1]$. Moreover, it can be arranged that 
$$
\sup_{x, y\in \R^n, \, x\neq y}\frac{|\nabla F(x)-\nabla F(y)|}{\omega(|x-y|)}\leq 8M
$$
(or even $\textrm{Lip}(\nabla F)\leq M$ in the $C^{1,1}$ case, that is to say when $\omega(t)=t$).

Besides the very basic character of Problem \ref{main problem}, there are other reasons for wanting to solve this kind of problem, as extension techniques for convex functions have natural applications in Analysis, Differential Geometry, PDE theory (in particular Monge-Amp\`ere equations), Economics, and  Quantum Computing. See the introductions of \cite{AzagraMudarra2017PLMS, GhomiJDG2001, MinYan} for background about convex extensions problems, and see \cite{BrudnyiShvartsman, Fefferman2005, Fefferman2006, FeffermanSurvey, FeffermanIsraelLuli2, FeffermanIsraelLuli1, Glaeser, JSSG, LeGruyer1} and the references therein for information about general Whitney extension problems.

Let $C^{1}_{\textrm{conv}}(\R^n)$ stand for the set of all functions $f:\R^n\to\R$ which are convex and of class $C^1$. In \cite{AzagraMudarra2017PLMS}, and for the class $\mathcal{C}=C^1(\R^n)$, we could only obtain a solution to Problem \ref{main problem} in the particular case that $E$ is a compact set. In this special situation the three necessary and sufficient conditions on $(f,G)$ that we obtained for $C^{1}_{\textrm{conv}}(\R^n)$ extendibility are:
$$
G \textrm{ is continuous, and }
\lim_{|z-y|\to 0^{+}}\frac{f(z)-f(y)-\langle G(y), z-y\rangle}{|z-y|}=0 \textrm{ uniformly on } E \eqno (W^1)
$$
(which is equivalent to Whitney's classical condition for $C^1$ extendibility), 
$$
f(x)-f(y)\geq \langle G(y), x-y\rangle \, \textrm{ for all } x,y\in E \eqno(C)
$$
(which ensures convexity), and
$$
f(x)-f(y)=\langle G(y), x-y\rangle \implies G(x)= G(y), \textrm{ for all } x,y\in E \eqno(CW^1)
$$
(which tells us that if two points of the graph of $f$ lie on a line segment contained in a hyperplane which we want to be tangent to the graph of an extension at one of the points, then our putative tangent hyperplanes at both points must be the same).
In fact, it is easy to see \cite[Remark 1.9]{AzagraMudarra2017PLMS} that continuity of $G$ plus conditions $(C)$ and $(CW^{1})$ imply Whitney's condition $(W^{1})$.

In \cite{AzagraMudarra2017PLMS} we also gave examples showing that the above conditions are no longer sufficient when $E$ is not compact (even if $E$ is an unbounded convex body). The reasons for this insufficiency can be mainly classified into two kinds of difficulties that only arise if the set $E$ is unbounded and $G$ is not uniformly continuous on $E$:
\begin{enumerate}
\item There may be no convex extension of the jet $(f, G)$ to the whole of $\R^n$.
\item Even when there are convex extensions of $(f, G)$ defined on all of $\R^n$, and even when some of these extensions are differentiable in some neighborhood of $E$, there may be no $C^1(\R^n)$ convex extension of $(f, G)$.
\end{enumerate}
The aim of this paper is to show how one can overcome these difficulties by adding new necessary conditions to $(W^{1}), (C), (CW^{1,1})$ in order to obtain a complete solution to Problem \ref{main problem} for the case that $\mathcal{C}=C^1(\R^n)$.

As is perhaps inevitable, our solution to Problem \ref{main problem} contains several technical conditions which may be quite difficult to grasp at a first reading. For this reason we will reverse the logical order of the exposition: we will start by providing some corollaries and examples. Only at last will the main theorem be stated.

The first kind of complication we have mentioned is well understood thanks to \cite{SchulzSchwartz}, and is not difficult to deal with: the requirement that
$$
\lim_{k\to\infty} \frac{\langle G(x_k), x_k \rangle - f(x_k)}{|G(x_k)|} = +\infty \textrm{
for every sequence } (x_k)_k \subset E \textrm{ with } \lim_{k\to\infty} | G(x_k) |=+\infty \eqno(EX)
$$
guarantees that there exist convex functions $\varphi:\R^n\to\R$ such that $\varphi_{|_E}=f$. In fact the extension $\varphi$ can be determined as the minimal convex extension of the jets, i.e., $\varphi(z)=\sup_{x\in E}\{ f(x)+\langle G(x), z-x\rangle\}$, and so that one has $G(x)\in\partial \varphi(x)$ (the subdifferential of $\varphi$ at $x$). See Lemma \ref{finitenessminimal} below.

The second kind of difficulty, however, is of a subtler geometrical character, and is related, on the one hand, to the rigid global behavior of convex functions (see Theorem \ref{decomposition theorem} below) and, on the other hand, to the fact that a differentiable (or even real-analytic) convex function $f:\R^n\to\R$ may have what one can call {\em corners at infinity}. As we indicated in $(2)$ above, there are examples of data $(E, f, G)$ which satisfy $(EX)$, $(W^1)$, $(C)$, and $(CW^{1})$ but which do not admit $C^1(\R^2)$ convex extensions. A prototypical instance is given in Example \ref{four examples}(4) below, or by the variant that we next formulate.
\begin{example}\label{example of a corner at infinity that cannot be extended}
{\em Consider $E=\{(x,y)\in\R^2 : y\leq \log |x|\}\cup \{(x,y)\in\R^2 : |x|\geq 1\}$, $f(x,y)=|x|$, $G(x,y)=(-1,0)$ if $x<0$, $G(x,y)=(1,0)$ if $x>0$.
Since the convex function $(x, y)\mapsto |x|$, together with its derivative, are smooth in $\R^{2}\setminus\{ x=0\}$ and extend the jet $(f, G)$ from $E$ to this region, it is clear that $(f, G)$ satisfies $(EX)$, $(W^1)$, $(C)$, and $(CW^{1})$ on $E$. We claim that there is no $C^1$ convex extension of $(f, G)$ to $\R^2$. The quickest and easiest way to prove this claim is just to apply Theorem \ref{MainTheoremWithThereExistsX} below. An intuitive geometrical argument that can be made rigorous is that in order to allow the {\em corner at infinity} that this jet has along the line $x=0$, any $C^1_{\textrm{conv}}(\R^2)$ extension of this datum would have to be {\em essentially coercive} in the direction of this line, but the requirement that $f(x,y)=|x|$ for $|x|\geq 1$ precludes the existence of any such extension. However, it is interesting to note that if we replace $E$ with $C=\{(x,y)\in\R^2 : y\leq \log |x|\}$ then the situation changes completely: there are $C^{1}_{\textrm{conv}}(\R^2)$ extensions of $(f, G)$ from $C$ to $\R^2$.}
\end{example}

In a short while we will be giving a precise meaning to the expressions {\em corner at infinity} and {\em essentially coercive}, but let us first ask ourselves this question:
what would appear to be a natural generalization of condition $(CW^{1})$ to the noncompact setting? In the absence of compactness it is natural to try to replace points 
by sequences, so as a first guess one is tempted to consider the following condition:
if $(x_k)_k, (z_k)_k$ are sequences in $E$ then
\begin{equation}\label{guessing conditions}
\lim_{k\to\infty}\left(f(x_k)-f(z_k)-\langle G(z_k), x_k-z_k\rangle\right)=0 \implies \lim_{k\to\infty}|G(x_k)-G(z_k)|=0.
\end{equation}
Unfortunately, if $E$ is unbounded this condition is not necessary for the existence of a convex function $F\in C^{1}(\R^n)$ such that $(F, \nabla F)=(f,G)$ on $E$, as the following example shows.
\begin{example}\label{essentially coercive corner at infinite in R2}
{\em Let $f:\R^2\to\R$ be defined by $f(x,y)=\sqrt{x^{2}+e^{-2y}}$. This is a real analytic strictly convex function on $\R^2$ (one can easily check that the Hessian $D^2 f$ is strictly positive everywhere). We have
$$
\nabla f(x,y)=\left( \frac{x}{\sqrt{x^{2}+e^{-2y}}}, -\frac{e^{-2y}}{\sqrt{x^{2}+e^{-2y}}}\right),
$$
and by considering the sequences 
$$
z_k=\left( \frac{1}{k}, k\right), \,\,\, x_k=(0, k),
$$
one easily sees that
$$
\lim_{k\to\infty}\left(f(x_k)-f(z_k)-\langle \nabla f(z_k), x_k-z_k\rangle\right)=0,
$$
and yet we have that
$$
\lim_{k\to\infty}|\nabla f(x_k)-\nabla f(z_k)|=1\neq 0
$$ }
\end{example}
(according to Definition \ref{definition of corner at infinity} below this means that the jet $(f, \nabla f)$ has a {\em corner at infinity} along the line $x=0$).
So our first guess turned out to be wrong, and we have to be more careful. In view of the above example, and at least if we are looking for extensions $(F, \nabla F)$ with $F\in C^{1}(\R^n)$ convex and essentially coercive (that is, $C^1$ convex extensions $F(x)$ which, up to a linear perturbation, tend to $\infty$ as $|x|$ goes to infinity), it could make sense to restrict condition \eqref{guessing conditions} to sequences $(x_k)_k$ which are bounded. On the other hand, if $(G(z_k))_k$ is not bounded as well, then by using condition $(EX)$, up to extracting a subsequence, we would have
$$
\lim_{k\to\infty} \frac{\langle G(z_k), z_k \rangle - f(z_k)}{|G(z_k)|} = \infty,
$$
hence
$$
\langle G(z_k), z_k \rangle - f(z_k)= M_k |G(z_k)|, \,\,\, \textrm{ with } \lim_{k\to\infty}M_k=\infty,
$$
and it follows that
\begin{eqnarray*}
& & f(x_k)-f(z_k)-\langle G(z_k), x_k-z_k\rangle= f(x_k)-f(z_k)+\langle G(z_k), z_k\rangle -\langle G(z_k), x_k\rangle \geq \\
& & f(x_k)+ \left( M_k-|x_k|\right) |G(z_k)|\to\infty
\end{eqnarray*}
(because $(f(x_k))_k$ and $(x_k)_k$ are bounded and $M_k\to\infty$). Thus we have learned that we cannot have
$$
\lim_{k\to\infty}\left(f(x_k)-f(z_k)-\langle G(z_k), x_k-z_k\rangle\right)=0
$$
unless $(G(x_k))_k$ is bounded. An educated guess for a good substitute of $(CW^1)$ could then be to require that
\begin{equation}\label{guessing conditions 2}
\lim_{k\to\infty}\left(f(x_k)-f(z_k)-\langle G(z_k), x_k-z_k\rangle\right)=0 \implies \lim_{k\to\infty}|G(x_k)-G(z_k)|=0
\end{equation}
\hspace{2.2cm} {\em for all sequences $(x_k)_k$ and $(z_k)_k$ in $E$ such that $(x_k)_k$ and $(G(z_k))_k$ are bounded.}

This new condition can be checked to be necessary for the existence of a function $F$ which solves our problem. Now, if we add \eqref{guessing conditions 2} to $(EX)$ and $(C)$, will this new set of conditions be sufficient as well? The answer to this question depends on how large the set $\textrm{span}\{G(x)-G(y) : x, y\in E\}$ is. If this set coincides with $\R^n$ then those conditions are sufficient: this is the content of the following easy\footnote{But especially useful, as for generic initial data $(E, f, G)$ one has
$\textrm{span}\{G(x)-G(y) : x, y\in E\}=\R^n$} consequence of the main result of this paper.
On the other hand, if we do not have $\textrm{span}\{G(x)-G(y) : x, y\in E\}=\R^n$ then we already know by Example \ref{example of a corner at infinity that cannot be extended} that our problem will be more difficult to tackle.

\begin{corollary}\label{MainCorollaryWithEssentiallyCoerciveExtensions}
Fix an arbitrary subset $E$ of $\Rn$ and two functions $f:E \to \R, \: G: E \to \Rn$.
Suppose that $\textrm{span}\{G(x)-G(y) : x, y\in E\}=\R^n$. Then there exists a convex function $F:\R^n\to\R$ of class $C^1$ with $F_{|_E}=f$ and $(\nabla F)_{|_E}=G$ if and only if the following conditions are satisfied:
\begin{itemize}
\item[$(i)$] $G$ is continuous and $f(x)\geq f(y)+\langle G(y), x-y\rangle$ for all $x, y\in E$.
\item[$(ii)$] If $(x_k)_k \subset E$ is a sequence for which $\lim_{k\to\infty} | G(x_k) |=+\infty,$ then
$$
\lim_{k\to\infty} \frac{\langle G(x_k), x_k \rangle - f(x_k)}{|G(x_k)|} = +\infty.
$$
\item[$(iii)$] If $(x_k)_k, (z_k)_k$ are sequences in $E$ such that $( x_k)_k$ and $(G(z_k))_k$ are bounded, and
$$
 \lim_{k\to\infty} \left( f(x_k)-f(z_k)- \langle G(z_k), x_k-z_k \rangle \right) = 0,
$$
then $\lim_{k\to\infty} | G(x_k)-G(z_k) | = 0$.
\end{itemize}
Moreover, whenever these conditions are satisfied, the extension $F$ can be taken to be {\em essentially coercive}.
\end{corollary}

Here, by saying that $F$ is {\em essentially coercive} we mean that there exists a linear function $\ell:\R^n\to\R$ such that 
$$
\lim_{|x|\to\infty}\left( F(x)-\ell(x)\right)=\infty.
$$

Let us mention that the above corollary is applied in \cite{AzagraHajlasz} to show that a convex function $f:\R^n\to\R$ has a
Lusin property of type $C^{1}_{\textrm{conv}}(\R^n)$ (meaning that for every $\varepsilon>0$ there exists a convex function $g\in C^{1}(\R^n)$ such that $\mathcal{L}^{n}\left(\{x\in\R^n : f(x)\neq g(x)\}\right)<\varepsilon$, where $\mathcal{L}^n$ denotes Lebesgue's measure) if and only if either $f$ is essentially coercive or else $f$ is already $C^1$ (in which case taking $g=f$ is the only possible option). 

In order to quickly understand Corollary \ref{MainCorollaryWithEssentiallyCoerciveExtensions}, instead of looking at the rather technical proof of Theorem \ref{MainTheorem1WithPThenP} below we recommend reading the proof of \cite[Theorem 1.10]{AzagraMudarra2017PLMS}, which can be easily adapted to produce a simpler proof of Corollary \ref{MainCorollaryWithEssentiallyCoerciveExtensions}.

\medskip

By comparing Examples \ref{example of a corner at infinity that cannot be extended} and \ref{essentially coercive corner at infinite in R2} with Corollary \ref{MainCorollaryWithEssentiallyCoerciveExtensions} we may arrive at a remarkable conclusion: our given jet $(f, G)$ may well have some {\em  corners at infinity} and, for $C^1$ convex extension purposes, that will not matter at all as long as $(f, G)$ forces all possible convex extensions to be essentially coercive (equivalently, as long as $\textrm{span}\{G(x)-G(y) : x, y\in E\}=\R^n$). But, if the given datum presents some corners at infinity and does not force essential coercive in the directions of those corners, then we will have to be more careful, as $C^{1}_{\textrm{conv}}(\R^n)$ extensions may not exist in this case.

Let us now explain what we mean by a jet having a corner at infinity. 
\begin{definition}\label{definition of corner at infinity}
{\em Let $X$ be a proper linear subspace of $\R^n$ and let us denote by $X^\perp$ its orthogonal complement. We say that a jet $(f, G):E\subset\R^n\to\R\times \R^n$ has a corner at infinity in a direction of $X^{\perp}$ provided that there exist two sequences $(x_k)_k, (z_k)_k$ in $E$ such that, if $P_{X}:\R^n\to X$ denotes the orthogonal projection, we have that
$(P_X(x_k))_k$ and $(G(z_k))_k$ are bounded, $\lim_{k\to\infty}|x_k|=\infty$, 
$$
\lim_{k\to\infty} \left( f(x_k)-f(z_k)- \langle G(z_k), x_k-z_k \rangle \right) = 0,
$$
and yet
$$
\limsup_{k\to\infty} | G(x_k)-G(z_k) | > 0.
$$
We will also say that the jet $(f, G)$ has a corner at infinity in the direction of the line $\{tv : t\in\R\}$ (where $v\in\R^{n}\setminus\{0\}$) provided that there exist sequences $(x_k)_k, (z_k)_k$ satisfying the above properties with $P_X$ being the orthogonal projection onto the hyperplane $X$ perpendicular to $v$.}
\end{definition}
For instance, the function $f(x,y)$ of Example \ref{essentially coercive corner at infinite in R2}, and its gradient, when restricted to the sequences $(x_k)_k$, $(z_k)_k$ defined there, give an instance of a jet that has a corner at infinity directed by the line $x=0$. Of course, the pair $(f, \nabla f)$, unrestricted, provides another instance. In this case it is natural to say that the function $f$ itself has a corner at infinity. More pathological examples can be given in higher dimensions: for instance, if $1\leq k \leq n$ then
\begin{equation}\label{example of corner function 2}
f(x_1, x_2, \ldots, x_n)=\sqrt{\sum_{j=1}^{k}x_{j}^2 +\sum_{j=k+1}^{n} e^{-2x_j}}, \quad x=(x_1,\ldots,x_n)\in \R^n
\end{equation}
is a convex function of class $C^\infty$ with strictly positive Hessian at every point, which has a corner at infinity in the direction of $e_j$ for every $j=k+1,\ldots,n$, and which is essentially coercive. On the other hand, if $n \geq 3$ and $2 \leq k <n$, then
\begin{equation}\label{example of corner function 3}
g(x_1, \ldots, x_n)=\sqrt{x_{1}^2 +\sum_{j=2}^{k} e^{-2x_j}} , \quad x=(x_1,\ldots,x_n)\in \R^n.
\end{equation}
is convex and of class $C^\infty,$ $g$ has a corner at infinity in the direction of $e_j,$ for every $j=2,\ldots,k,$ and $g$ is not essentially coercive. Nevertheless $g$ is {\em
essentially $k$-coercive} (meaning that $g$
can be written as $g=c\circ P$, where $P$ is the orthogonal projection onto a
$k$-dimensional subspace of $X$ of $\R^n$ and $c:X\to\R$ is essentially coercive).

In general it can be shown (and in fact this is a consequence of our main results) that the presence of a corner at infinity in the graph of a differentiable convex function $f:\R^n\to\R$ forces essential $k$-coercivity of $f$, for some $k\geq 2$, in a subspace of directions containing the directions of the corner.

We will not explicitly use the notion of corner at infinity in our proofs. Our reasons for introducing these objects are the facts that: 1) one way or another, corners at infinity will be to blame for most of the predicaments and technicalities involved in any attempt to solve Problem \ref{main problem} for $\mathcal{C}=C^1(\R^n)$; and 2) we firmly believe that the reader will be more able to understand the statements and proofs of the following results once he has been acquainted with this notion. As a matter of fact, the most technical conditions of Theorems \ref{MainTheoremWithThereExistsX} and \ref{MainTheorem1WithPThenP} below can be rephrased more intuitively in terms of corners at infinity and essential coercivity of data in the directions of those corners.

\medskip

Unfortunately Corollary \ref{MainCorollaryWithEssentiallyCoerciveExtensions} does not provide a characterization of the $1$-jets which admit essentially coercive $C^1$ convex extensions. This is due to the fact that a jet $(f, G)$ defined on a set $E$ may admit such an extension and yet $\textrm{span}\{G(x)-G(y): x, y\in E\}\neq\R^n$, as shown by the trivial example of the jet $(f_0, G_0)$ with $E_0=\{0\}\subset\R^2$, $f_0(0)=0$, $G_0(0)=0$, which admits a $C^1$ convex and coercive extension given by $(F_0, \nabla F_0)$, where $F_0(x,y)=x^2 +y^2$.

Of course, a $C^1$ convex extension problem for a given $1$-jet $(f, G)$ may have solutions which are not essentially coercive; in fact it may happen that none of its solutions are essentially coercive. A sister of Corollary \ref{MainCorollaryWithEssentiallyCoerciveExtensions} which provides a more general, but still partial  solution to Problem \ref{main problem}, is the following.

\begin{corollary}\label{corollary with Y equals span G(x)-G(y)}
Given an arbitrary subset $E$ of $\Rn$ and two functions $f:E \to \R, \: G: E \to \Rn$, assume that the following conditions are satisfied:
\begin{itemize}
\item[$(i)$] $G$ is continuous and $f(x)\geq f(y)+\langle G(y), x-y\rangle$ for all $x, y\in E$.
\item[$(ii)$] If $(x_k)_k \subset E$ is a sequence for which $\lim_{k\to\infty} | G(x_k) |=+\infty,$ then
$$
\lim_{k\to\infty} \frac{\langle G(x_k), x_k \rangle - f(x_k)}{|G(x_k)|} = +\infty.
$$
\item[$(iii)$] Let $P=P_Y:\R^n\to\R^n$ be the orthogonal projection onto $Y:=\textrm{span}\{G(x)-G(y)\: : \: x,y \in E\}$. If $(x_k)_k, (z_k)_k$ are sequences in $E$ such that $(P (x_k))_k$ and $(G(z_k))_k$ are bounded and
$$
\lim_{k\to\infty} \left( f(x_k)-f(z_k)- \langle G(z_k), x_k-z_k \rangle \right) = 0,
$$
then $\lim_{k\to\infty} | G(x_k)-G(z_k) | = 0$.
\end{itemize}
Then there exists a convex function $F:\R^n\to\R$ of class $C^1$ such that $F_{|_E}=f$ and $(\nabla F)_{|_E} =G$.
\end{corollary}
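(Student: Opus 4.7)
The plan is to reduce the statement to Corollary \ref{MainCorollaryWithEssentiallyCoerciveExtensions} by factoring through the projection $P=P_Y$. Fix any $x_0\in E$ and set $w_0:=G(x_0)-P(G(x_0))\in Y^{\perp}$. Since $G(x)-G(x_0)\in Y$ for every $x\in E$, the vector $w_0$ does not depend on $x_0$ and $G(x)-w_0\in Y$ for all $x\in E$. Replacing $(f,G)$ by $(f-\langle w_0,\cdot\rangle,\, G-w_0)$ reduces us to the case $w_0=0$, i.e.\ $G:E\to Y$. A direct expansion shows that the ``convexity defect'' $f(x)-f(y)-\langle G(y),x-y\rangle$ is invariant under this reduction, so hypotheses $(i)$ and $(iii)$ are preserved, and $(ii)$ is preserved because $|G(x)-w_0|/|G(x)|\to 1$ whenever $|G(x)|\to\infty$.

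Next I show that both $f$ and $G$ factor through $P$ on $E$. If $x_1,x_2\in E$ satisfy $P(x_1)=P(x_2)$, then $x_1-x_2\in Y^{\perp}$ and $G(x_1),G(x_2)\in Y$, so $\langle G(x_j),x_1-x_2\rangle=0$ for $j=1,2$. The two convexity inequalities in $(i)$ then force $f(x_1)=f(x_2)$, and applying $(iii)$ to the constant sequences $(x_1)$ and $(x_2)$ (all boundedness hypotheses are trivial and the defect vanishes) yields $G(x_1)=G(x_2)$. Hence a well-defined $1$-jet $(\tilde f,\tilde G):P(E)\subset Y\to\R\times Y$ is given by $\tilde f(P(x)):=f(x)$ and $\tilde G(P(x)):=G(x)$.

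The remaining task is to apply Corollary \ref{MainCorollaryWithEssentiallyCoerciveExtensions} to $(\tilde f,\tilde G)$ with $Y\cong\R^{k}$ playing the role of the ambient space. The convexity inequality and conditions $(ii)$--$(iv)$ of that corollary translate directly from our hypotheses (using $\langle G(z),x-z\rangle=\langle G(z),P(x-z)\rangle$ when $G(z)\in Y$, and noting that the span in $(iv)$ equals $Y$ by definition of $Y$). The delicate point, which I expect to be the main obstacle, is the continuity of $\tilde G$ on $P(E)$. Given $P(x_k)\to P(x_0)$ in $P(E)$, one first needs an a priori bound on $(G(x_k))$: since $\langle G(x_k),x_k\rangle=\langle G(x_k),P(x_k)\rangle$ has magnitude at most a constant times $|G(x_k)|$, while convexity yields a lower bound on $f(x_k)$, the quotient in $(ii)$ stays bounded, forcing $(G(x_k))$ itself to be bounded. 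Once this is known, the estimate
$$
0\leq f(x_k)-f(x_0)-\langle G(x_0),x_k-x_0\rangle\leq \langle G(x_k)-G(x_0),P(x_k-x_0)\rangle,
$$
which uses $G(x_k)-G(x_0)\in Y$, tends to $0$, and $(iii)$ with $z_k\equiv x_0$ gives $G(x_k)\to G(x_0)$. Corollary \ref{MainCorollaryWithEssentiallyCoerciveExtensions} then supplies a convex $C^{1}$ function $\tilde F:Y\to\R$ extending $(\tilde f,\tilde G)$, and $F(x):=\tilde F(P(x))+\langle w_0,x\rangle$ is the desired convex $C^{1}$ extension of $(f,G)$ on $\R^n$, because for $x\in E$ one has $\nabla F(x)=\nabla\tilde F(P(x))+w_0=\tilde G(P(x))+w_0=G(x)$.
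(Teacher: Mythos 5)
Your proof is correct, and it takes a genuinely different route from the paper's. The paper treats Corollary \ref{corollary with Y equals span G(x)-G(y)} as an immediate special case of Theorem \ref{MainTheorem1WithPThenP} (taking $X=Y$, so that conditions $(iii)$ and $(iv)$ there become vacuous, while $(v)$ coincides with your $(iii)$). You instead reduce the statement to Corollary \ref{MainCorollaryWithEssentiallyCoerciveExtensions} on the smaller space $Y$: after peeling off the constant $Y^{\perp}$-component $w_0$ of $G$, you check that both $f$ and $G$ are constant along fibers of $P_Y$ over $E$, push the jet down to $P_Y(E)\subset Y$, verify the hypotheses of Corollary \ref{MainCorollaryWithEssentiallyCoerciveExtensions} there (with the condition $\textrm{span}\{\tilde G(\xi)-\tilde G(\eta)\}=Y$ now automatic), and pull back the resulting extension via $P_Y$ and $w_0$. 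The delicate point you correctly identify and handle is the continuity of the pushed-down gradient $\tilde G$: the a priori boundedness of $(G(x_k))_k$ via the contrapositive of $(ii)$, followed by the sandwich estimate $0\le f(x_k)-f(x_0)-\langle G(x_0),x_k-x_0\rangle\le\langle G(x_k)-G(x_0),P_Y(x_k-x_0)\rangle\to 0$ and an appeal to $(iii)$, is exactly what is needed. Conceptually, the dimension reduction you perform is implicit in the sufficiency proof of Theorem \ref{MainTheorem1WithPThenP} (which decomposes the minimal extension as $c\circ P_X+\langle v,\cdot\rangle$ and works with $c$ on $X$), but making it explicit at the level of the data gives a cleaner and more self-contained derivation, at the cost that it only covers the case $X=Y$ and does not address the extra freedom encoded by condition $(iv)$ of the full theorem.
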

Condition $(iii)$ of the above corollary can be intuitively rephrased by saying that: 1) our jet satisfies a natural generalization of condition $(CW^1)$; and 2) $(f, G)$ cannot have corners at infinity in any direction contained in the orthogonal complement of the subspace $Y=\textrm{span}\{G(x)-G(y)\: : \: x,y \in E\}$.

It could be natural to hope for the conditions of Corollary \ref{corollary with Y equals span G(x)-G(y)} to be necessary as well, thus providing a nice characterization of those $1$-jets which admit $C^1$ convex extensions. Unfortunately the solution to Problem \ref{main problem} is necessarily more complicated, as the following example shows. 
\begin{example}
{\em Let $E_1=\{(x,y)\in\R^2 : y=\log |x|, |x|\in\N \cup\{\frac{1}{n}  : n\in\N\}\}$, $f_1(x,y)=|x|$, $G_1(x,y)=(-1,0)$ if $x<0$, $G_1(x,y)=(1,0)$ if $x>0$. In this case we have $Y_1:=\textrm{span}\{G_1(x,y)-G_1(x',y') \: : \: (x,y), (x',y') \in E_1 \}=\R\times\{0\}$, and it is easily seen that condition $(iii)$ is not satisfied. However, it is not difficult to check that, for $\varepsilon>0$ small enough, if we set $E_{1}^{*}=E_1\cup \{(0,1)\}$, $f_{1}^{*}=f_1$ on $E_1$, $f_{1}^{*}(0,1)=\varepsilon$, $G_{1}^{*}=G_1$ on $E_1$, and $G_{1}^{*}(0,1)=(0, \varepsilon)$, then Corollary \ref{MainCorollaryWithEssentiallyCoerciveExtensions} implies that the problem of finding a $C^{1}$ convex extension of the jet $(f_{1}^{*}, G_{1}^{*})$ does have a solution, and therefore the same is true of the jet $(f_1, G_1)$. }
\end{example}

This example shows that in some cases the $C^{1}$ convex extension problem for a $1$-jet $(f, G)$ may be geometrically underdetermined in the sense that we may not have been given enough differential data so as to have condition $(iii)$ of the above corollary satisfied with $Y=\textrm{span}\{G(x)-G(y) \: : \: x,y\in E \}$, and yet it may be possible to find a few more jets $(\beta_j, w_j)$ associated to finitely many points $p_j\in\R^n\setminus \overline{E}$, $j=1, \ldots, m$, so that, if we define $E^{*}=E\cup\{p_1, \ldots , p_{m}\}$ and extend the functions $f$ and $G$ from $E$ to $E^{*}$ by setting
\begin{equation}\label{definitionnewdata}
f(x_j):=\beta_j, \quad G(p_j):= w_j \quad \text{for} \quad j=1, \ldots, m,
\end{equation}
then the new extension problem for $(f, G)$ defined on $E^{*}$ does satisfy condition $(iii)$ of Corollary \ref{corollary with Y equals span G(x)-G(y)}. Notice that, the larger $Y$ grows, the weaker condition $(iii)$ of Corollary \ref{corollary with Y equals span G(x)-G(y)} becomes.

We are now prepared to state a first version of our main result.

\begin{theorem}\label{MainTheoremWithThereExistsX}
Given an arbitrary subset $E$ of $\Rn$ and two functions $f:E \to \R, \: G: E \to \Rn$, the following is true. There exists a convex function $F:\R^n\to\R$ of class $C^1$ such that $F_{|_E}=f$, and $(\nabla F)_{|_E} =G$,  if and only if the following conditions are satisfied.
\begin{itemize}
\item[$(i)$] $G$ is continuous and $f(x)\geq f(y)+\langle G(y), x-y\rangle$ for all $x, y\in E$.
\item[$(ii)$] If $(x_k)_k \subset E$ is a sequence for which $\lim_{k\to\infty} | G(x_k) |=+\infty,$ then
$$
\lim_{k\to\infty} \frac{\langle G(x_k), x_k \rangle - f(x_k)}{|G(x_k)|} = +\infty.
$$
\item[$(iii)$] Let $Y:=\textrm{span}\{G(x)-G(y) \: : \: x,y\in E\}.$ There exists a linear subspace $X\supseteq Y$ such that, either $Y=X$, or else, if we denote $k= \dim Y,\:d= \dim X$ and $P_X: \R^n \to\R^n$ is the orthogonal projection from $\R^n$ onto $X$, there exist points $p_1, \ldots, p_{d-k} \in \R^n \setminus \overline{E}$, numbers $\beta_1,\ldots, \beta_{d-k} \in \R$, and vectors $w_1, \ldots, w_{d-k} \in \R^n$  such that:
\begin{itemize}
\item[(a)] $X= \textrm{span} \left( \lbrace u-v \: : \: u,v\in G(E)\cup \lbrace w_1,\ldots,w_{d-k} \rbrace \rbrace \right).$
\item[(b)] $\beta_j > \max_{1 \leq i \neq j \leq d-k} \lbrace \beta_i + \langle w_i, p_j -p_i \rangle \rbrace$ for all $1 \leq j \leq d-k.$
\item[(c)] $\beta_j > \sup_{z\in E, \: |G(z)| \leq N} \lbrace f(z)+ \langle G(z), p_j -z \rangle \rbrace$ for all $1 \leq j \leq d-k$ and $N \in \N.$
\item[(d)] $\inf_{x\in E, \: |P_X(x)| \leq N}\lbrace f(x) - \max_{1 \leq j \leq d-k} \lbrace \beta_j + \langle w_j, x-p_j\rangle \rbrace \rbrace >0$ for all $N \in \N.$
\end{itemize}
\item[$(iv)$] If $X$ and $P_X$ are as in $(iii)$, and $(x_k)_k, (z_k)_k$ are sequences in $E$ such that $(P_X(x_k))_k$ and $(G(z_k))_k$ are bounded and
$$
\lim_{k\to\infty} \left( f(x_k)-f(z_k)- \langle G(z_k), x_k-z_k \rangle \right) = 0,
$$
then $\lim_{k\to\infty} | G(x_k)-G(z_k) | = 0$.
\end{itemize}
\end{theorem}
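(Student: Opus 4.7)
My plan is to establish necessity and sufficiency in turn, reducing sufficiency to Corollary~\ref{MainCorollaryWithEssentiallyCoerciveExtensions} after augmenting the data with the auxiliary points given by $(iii)$ and splitting off the $X^{\perp}$-direction as a linear summand.

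\textbf{Necessity.} Given a $C^{1}$ convex $F$ with $F|_{E}=f$ and $(\nabla F)|_{E}=G$, condition $(i)$ is immediate and $(ii)$ is the classical Schulz--Schwartz criterion for convex extendibility. For $(iii)$ and $(iv)$ I would invoke the decomposition theorem (Theorem~\ref{decomposition theorem}) alluded to in the introduction: there is a canonical maximal subspace $X\subseteq \R^{n}$ and a linear $\ell$ such that $F-\ell$ factors through $P_{X}$ as an essentially coercive convex function $c$ on $X$. Since $\nabla F(x)-\nabla F(y)\in X$ for all $x,y$, we have $Y\subseteq X$; if $Y\subsetneq X$, pick $p_{1},\ldots,p_{d-k}\in\R^{n}\setminus\overline{E}$ whose gradients $w_{j}:=\nabla F(p_{j})$ supplement $G(E)$ to span $X$, and set $\beta_{j}:=F(p_{j})$. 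Genericity in the choice of the $p_{j}$, together with the strict convexity of $c$ transverse to its affine directions, allows us to achieve $(a)$--$(d)$. Condition $(iv)$ then follows from $F=c\circ P_{X}+\ell$: if $P_{X}(x_{k})$ is bounded and $G(z_{k})$ is bounded, the Bregman divergence of $F$ tending to $0$ descends to the same relation for $c$ at projected points, and essential coercivity of $c$ on $X$ combined with a subsequence extraction yields $|G(x_{k})-G(z_{k})|\to 0$.

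\textbf{Sufficiency.} Augment to $E^{*}:=E\cup\{p_{1},\ldots,p_{d-k}\}$ with $\tilde f(p_{j})=\beta_{j}$ and $\tilde G(p_{j})=w_{j}$. Conditions $(a)$--$(d)$ of $(iii)$ are designed precisely so that $(\tilde f,\tilde G)$ satisfies $(i)$ and $(ii)$ on $E^{*}$, with $\textrm{span}\{\tilde G(x)-\tilde G(y):x,y\in E^{*}\}=X$ by $(a)$. Since this span sits in $X$, the vector $P_{X^{\perp}}\tilde G$ is a constant $a\in X^{\perp}$ on $E^{*}$; the subgradient inequality applied in both directions to $x,y\in E^{*}$ with $P_{X}x=P_{X}y$ then gives $\tilde f(x)-\tilde f(y)=\langle a,x-y\rangle$, while $(iv)$ applied to constant sequences at such pairs gives $\tilde G(x)=\tilde G(y)$. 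Hence $\tilde f(x)-\langle a,x\rangle$ and $\tilde G(x)-a$ descend to well-defined functions $\tilde f_{X},\tilde G_{X}$ on $P_{X}(E^{*})\subseteq X$.

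It now suffices to produce a $C^{1}$ convex extension $F_{X}:X\to\R$ of $(\tilde f_{X},\tilde G_{X})$, for then $F(x):=F_{X}(P_{X}x)+\langle a,x\rangle$ is a convex $C^{1}$ extension of $(f,G)$ on $\R^{n}$. Inside $X$ we have $\textrm{span}\{\tilde G_{X}(u)-\tilde G_{X}(v)\}=X$, so it remains to verify hypotheses $(i)$, $(ii)$, $(iii)$ of Corollary~\ref{MainCorollaryWithEssentiallyCoerciveExtensions} for $(\tilde f_{X},\tilde G_{X})$ within $X$. The first two transfer immediately; the third is exactly the content of our hypothesis $(iv)$ after quotienting by the $X^{\perp}$-direction. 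The step I expect to be the main obstacle is the verification of continuity of $\tilde G_{X}$ and of the sequential condition for $(\tilde f_{X},\tilde G_{X})$, since a bounded sequence in $X$ may be the projection of an unbounded sequence in $\R^{n}$; the rigidity forced by $P_{X^{\perp}}\tilde G\equiv a$ together with $(iv)$ is exactly what closes this gap, typically via a blow-up argument that uses $(ii)$ to rule out $|\tilde G(x_{k})|\to\infty$ and then applies $(iv)$ to a suitable lifting of the $X$-sequences back to $\R^{n}$. Once this translation is complete, Corollary~\ref{MainCorollaryWithEssentiallyCoerciveExtensions} delivers $F_{X}$, and gluing with $\langle a,x\rangle$ produces the desired $F$.
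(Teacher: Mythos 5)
Your sufficiency strategy (augment to $E^{*}$, observe that $P_{X^{\perp}}\tilde G\equiv a$ is constant, descend to $P_X(E^{*})\subset X$, and apply Corollary~\ref{MainCorollaryWithEssentiallyCoerciveExtensions}) is conceptually viable but does not match the paper's route, and as written it has a logical dependency problem within this paper: Corollary~\ref{MainCorollaryWithEssentiallyCoerciveExtensions} is derived here as a consequence of Theorem~\ref{MainTheorem1WithPThenP}, of which Theorem~\ref{MainTheoremWithThereExistsX} is itself a special case, so invoking the Corollary would be circular unless you import an independent proof of it (the paper only points to an adaptation of \cite[Theorem~1.10]{AzagraMudarra2017PLMS}). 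What the paper actually does is form the minimal convex extension $m=m(f,G,E^{*})$, show $X_m=X$, decompose $m=c\circ P_X+\langle v,\cdot\rangle$ via Theorem~\ref{decomposition theorem}, prove the delicate Lemma~\ref{differentiabiltyclosurec^*} that $c$ is differentiable on $\overline{P_X(E^{*})}$, and then extend $c$ through the classical Whitney theorem followed by the Kirchheim--Kristensen convex-envelope regularization (Lemma~\ref{extensionconvexcoercive}). Your ``blow-up argument'' for continuity of $\tilde G_X$ and for the transferred sequential condition can in fact be made precise (boundedness of $\tilde G(x_\ell)$ when $P_X(x_\ell)$ is bounded does follow from $(i)$, $(ii)$, and $\tilde G(x)-\tilde G(y)\in X$; the rest follows from $(iv)$ applied to lifted sequences), but this is exactly the content that the paper packages as Lemmas~\ref{conditioncw1forE*} and~\ref{differentiabiltyclosurec^*}, and it deserves to be written out rather than waved at.

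The genuine gap is in your necessity argument for condition $(iii)(a)$--$(d)$. You write that ``genericity in the choice of the $p_j$, together with the strict convexity of $c$ transverse to its affine directions, allows us to achieve $(a)$--$(d)$.'' This is not correct: the essentially coercive part $c=c_F$ in the decomposition of $F$ has no affine directions by definition of $X_F$, and essential coercivity does not give any kind of strict convexity (for instance a $C^1$ regularization of $(x,y)\mapsto |x|+|y|$ is coercive but far from strictly convex). Strict convexity is neither available nor what is needed. The correct mechanism, which is the one the paper uses, is this: choose $p_1,\dots,p_{d-k}$ so that $\nabla F(p_j)-\nabla F(x_0)$ together with $Y$ span $X$ and are linearly independent modulo $Y$; then $\nabla F(p_j)-\nabla F(x_0)\notin Y$, and since $Y$ is closed and $\nabla F$ continuous and $\nabla F(z)-\nabla F(x_0)\in Y$ for $z\in\overline{E}$, this forces $p_j\notin\overline{E}$ (a point you assert without argument). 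The strictness of the inequalities in $(b)$--$(d)$ then comes from the \emph{already established necessity of the sequential condition} $(iv)$ (more precisely, of condition $(v)$ of Theorem~\ref{MainTheorem1WithPThenP}) applied on $E\cup\{p_1,\dots,p_{d-k}\}$: if any of those inequalities failed to be strict, there would be sequences with Bregman gap tending to $0$ while $\nabla F(p_j)-\nabla F(x_0)$ stays at positive distance from $Y$, contradicting the sequential condition. Equivalently, one can view it as a consequence of the convex $(CW^1)$ rigidity: equality in the subgradient inequality for a $C^1$ convex function forces equality of gradients. Your ``genericity'' appeal does not deliver $(d)$, which is a uniform positivity over all $x\in E$ with $|P_X(x)|\leq N$, not just a pointwise inequality; getting the infimum to be strictly positive is exactly where the sequential condition is indispensable.
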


As we see, the difference between Theorem \ref{MainTheoremWithThereExistsX} and Corollary \ref{corollary with Y equals span G(x)-G(y)} is in the technical condition $(iii)$, which
can be informally summed up by saying that, whenever the jets $(f(x), G(x))$, $x\in E$, do not provide us with enough differential data so that condition $(iii)$ of Corollary \ref{corollary with Y equals span G(x)-G(y)} holds, there is enough room in $\R^n\setminus \overline{E}$ to add finitely many new jets $(\beta_j, w_j)$, associated to new points $p_j$, $j=1, \ldots , d-k$, in such a way that the new extension problem does satisfy the conditions of Corollary \ref{corollary with Y equals span G(x)-G(y)}. This condition also tells us that the new extension problem will be one for which, even though there may be corners at infinity, those corners will necessarily be directed by subspaces which are contained in the span of the putative derivatives, and the new data will force essential coercivity of all possible extensions in the directions of the corners.

Later on we will show that, in the particular case that $G$ is bounded (and so we may expect to find an $F$ with a bounded gradient), these complicated conditions about compatibility of the old and new data admit a much nicer geometrical reformulation, see Theorem \ref{maintheoremlipschitzc1} below.

Let us consider some examples that will hopefully offer further clarification of these comments.

\begin{example}\label{four examples}
{\em Consider the following $1$-jets $(f_j, G_j)$ defined on subsets $E_j$ of $\R^n$:
\begin{enumerate}
\item  $E_1=\{(x,y)\in\R^2 : y=\log |x|, |x|\in\N \cup\{\frac{1}{n}  : n\in\N\}\},$ $f_1(x,y)=|x|$, $G_1(x,y)=(-1,0)$ if $x<0$, $G_1(x,y)=(1,0)$ if $x>0$.
\item  $E_2=\{(x,y)\in\R^2 : y=\log |x|, |x| \in\N \cup\{\frac{1}{n} : n\in\N\}\},$ $f_2=\varphi$, $G_{2}=\nabla\varphi$, where $\varphi(x,y)=\sqrt{x^2+ e^{-2y}}$.
\item $E_3=\{(x,y, z)\in\R^3 : z=0, y=\log |x|, |x|\in\N \cup\{ \frac{1}{n} : n\in\N\}\},$  $f_3=\varphi$, $G_{3}=\nabla\varphi$, where $\varphi(x,y,z)=\sqrt{x^2+ e^{-2y}}$.
\item $E_4=E_1\cup \{(x,y)\in\R^2 : |x|\geq 1\}$, $f_4(x,y)=|x|$, $G_4(x,y)=(-1,0)$ if $x<0$, $G_4(x,y)=(1,0)$ if $x>0$.
\end{enumerate}
Then one can check that:
\begin{enumerate}
\item[(i)] For the jet $(f_1, G_1)$, and with the notation of Theorem \ref{MainTheoremWithThereExistsX}, we have $Y=\R\times\{0\}$, but the smallest possible $X$ we can take is $X=\R^2$ (and all possible extensions $F$ must be essentially coercive on $\R^2$).
\item[(ii)] For the jet $(f_2, G_2)$ we have $Y=\R^2$, and all possible extensions $F$ must be essentially coercive on $\R^2$.
\item[(iii)] For the jet  $(f_3, G_3)$ we have $Y=\R^2\times\{0\}$, and we can take either $X=Y$ or $X=\R^3$.
\item[(iv)] For the jet $(f_4, G_4)$ we have $Y=\R\times\{0\}$, but one cannot apply Theorem \ref{MainTheoremWithThereExistsX} with any $X$. There exists no $F\in C^{1}_{\textrm{conv}}(\R^2)$ such that $(F, \nabla F)$ extends $(f_4, G_4)$.
\end{enumerate}
}
\end{example}

Even though Theorem \ref{MainTheoremWithThereExistsX} fully solves Problem \ref{main problem}, an important question\footnote{Coercitivity of a convex function may be relevant or even essential to a number of possible applications, e.g. in PDE theory.} remains open: how can we characterize those $1$-jets $(f, G)$ such that there exists an essentially coercive convex function $F\in C^{1}(\R^n)$ so that $(F, \nabla F)$ extends $(f, G)$? The answer is: those jets are the jets which satisfy the conditions of Theorem \ref{MainTheoremWithThereExistsX} with $X=\R^n$. More generally, one could ask for $C^1$ convex extensions with prescribed global behavior (meaning extensions which are essentially coercive only in some directions, and affine in others). This ties in with a question which will be extremely important in our proofs: what is the global geometrical shape of the $C^1$ convex extension we are trying to build? 

In this regard, it will be convenient for us to state a refinement of Theorem \ref{MainTheoremWithThereExistsX} which characterizes the set of $1$-jets admitting $C^1$ convex extensions with a prescribed global behavior, and which requires our introducing some definitions and notation.

\begin{defn}
{\em Let $Z$ be a real vector space, and $P:Z\to X$ be the orthogonal projection onto a subspace $X\subseteq Z$.
We will say that a function $f$ defined on a subset $E$ of $Z$ is {\em essentially $P$-coercive} provided that there exists a linear function $\ell:Z\to\R$ such that for every sequence $(x_k)_k\subset E$ with $\lim_{k\to\infty}|P(x_k)|=\infty$ one has
$$
\lim_{k\to\infty}\left(f-\ell\right)(x_k)=\infty.
$$
We will say that $f$ is {\em essentially coercive} whenever $f$ is essentially $I$-coercive, where $I:Z\to Z$ is the identity mapping.

If $X$ is a linear subspace of $\R^n$, we will denote by $P_{X}:\R^n\to X$ the orthogonal projection, and we will say that $f:E\to \R$ is {\em coercive in the direction of $X$} whenever $f$ is $P_{X}$-coercive. 

We will also denote by $X^{\perp}$ the orthogonal complement of $X$ in $\R^n$. For a subset $V$ of $\R^n$, $\textrm{span}(V)$ will stand for the linear subspace spanned by the vectors of $V$. 
}
\end{defn}

In \cite{Azagra} essentially coercive convex functions were called {\em properly convex}, and some approximation results, which fail for general convex functions, were shown to be true for this class of functions.
The following result was also implicitly proved in \cite[Lemma 4.2]{Azagra}. Since this will be a very important tool in the statements and proofs of all the results of the present paper,  and because we have introduced new terminology and added conclusions, we will provide a self-contained proof in Section 2 for the readers' convenience.

\begin{theorem}\label{decomposition theorem}
For every convex function $f:\R^n\to\R$ there exist a unique linear subspace $X_f$ of $\R^n$, a unique vector $v_{f}\in X_{f}^{\perp}$, and a unique essentially coercive function $c_{f}:X_f\to\R$ such that $f$ can be written in the form
$$
f(x)=c_{f}(P_{X_{f}}(x)) +\langle v_{f}, x\rangle \textrm{ for all } x\in\R^n.
$$
Moreover, if $Y$ is a linear subspace of $\R^n$ such that $f$ is essentially coercive in the direction of $Y$, then $Y\subseteq X_f$.
\end{theorem}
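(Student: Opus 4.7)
The plan is to build $X_f$, $v_f$, and $c_f$ directly from the set of affinity directions of $f$. Define
\[
L_f := \{v \in \R^n : t \mapsto f(x + tv) \textrm{ is affine for every } x \in \R^n\}.
\]
Because convexity forces the slope $t \mapsto (f(x+tv) - f(x))/t$ to be non-decreasing in $t>0$ with limit $f^\infty(v) := \lim_{t\to\infty} f(x+tv)/t$, affinity along such a line makes this slope constant, so that $f(x+v) - f(x) = f^\infty(v)$ for every $x\in\R^n$ and every $v\in L_f$. Composing translations then shows that $L_f$ is a linear subspace and that $\lambda(v) := f^\infty(v)$ is linear on $L_f$. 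I set $X_f := L_f^\perp$, choose $v_f \in L_f$ so that $\langle v_f, w\rangle = \lambda(w)$ for every $w \in L_f$, and put $c_f := f|_{X_f}$. Writing $x = y + z$ with $y = P_{X_f}(x)$ and $z = P_{L_f}(x)$ and using that $v_f \perp X_f$, a direct computation yields $f(x) = c_f(P_{X_f}(x)) + \langle v_f, x\rangle$.

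The key technical step is to show that $c_f$ is essentially coercive on $X_f$. Any affinity direction of $c_f$ in $X_f$ is, by the decomposition, an affinity direction of $f$, hence lies in $L_f \cap X_f = \{0\}$. I then invoke the following equivalence for a convex function $c:\R^N\to\R$: its affinity subspace is trivial if and only if $c$ is essentially coercive. Triviality of the affinity subspace amounts to $c^\infty(v)+c^\infty(-v)>0$ for every $v\neq 0$, which by a support-function calculation (using that $c^\infty$ is the support function of $\partial c^\infty(0)$) is equivalent to the convex set $\partial c^\infty(0)$ having non-empty interior in $\R^N$; any $\ell$ in that interior satisfies the strict inequality $\ell(v)<c^\infty(v)$ for every $v\neq 0$. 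Coercivity of $c-\ell$ then follows by contradiction: if $(c-\ell)(x_n)$ stayed bounded along $|x_n|\to\infty$, passing to $v_0 = \lim x_n/|x_n|$ and using convexity of $c-\ell$ along the segment from $0$ to $x_n$ one would get $(c-\ell)(tv_0)\le (c-\ell)(0)$ for every $t>0$, forcing $(c-\ell)^\infty(v_0)\le 0$, a contradiction.

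For uniqueness, in any decomposition $f = c\circ P_X + \langle v,\cdot\rangle$ with $c$ essentially coercive and $v\in X^\perp$, the subspace $X^\perp$ must equal $L_f$: every element of $X^\perp$ is an affinity direction of $f$, and triviality of the affinity subspace of $c$ (guaranteed by essential coercivity) forces any affinity direction of $f$ to project to $0$ in $X$. Thus $X = L_f^\perp = X_f$, and then $v$ and $c$ are determined by restriction. For the moreover statement, assume $f$ is essentially coercive in the direction of $Y$ with witness $\ell_Y$, and suppose for contradiction that $Y\not\subseteq X_f$, so that some $w\in L_f$ has $P_Y(w)\neq 0$. Since $f(\cdot+tw)=f(\cdot)+t\langle v_f,w\rangle$, the function $t \mapsto (f-\ell_Y)(x+tw)$ is affine with slope $\langle v_f,w\rangle-\ell_Y(w)$. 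A non-zero slope produces, as $t\to\pm\infty$, a sequence along which $|P_Y|\to\infty$ while $f-\ell_Y\to-\infty$, contradicting essential $P_Y$-coercivity. If instead the slope vanishes whenever $P_Y(w)\neq 0$, then since such $w$ span $L_f$ (their complement in $L_f$ being a proper subspace), the functionals $\ell_Y|_{L_f}$ and $\langle v_f,\cdot\rangle|_{L_f}$ coincide, making $f-\ell_Y$ invariant under $L_f$; its sublevel sets then contain entire $L_f$-cosets, whose $P_Y$-projections are cosets of the non-trivial subspace $P_Y(L_f)\subseteq Y$ and are therefore unbounded, again contradicting essential $P_Y$-coercivity. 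Hence $Y\subseteq X_f$.

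The main obstacle is the essential-coercivity step for $c_f$: extracting an affine functional strictly dominated by $c_f^\infty$ on $X_f\setminus\{0\}$ via the interior of $\partial c_f^\infty(0)$, and then converting that strict pointwise bound into a genuine coercivity statement through the segment convexity argument.
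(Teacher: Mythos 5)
Your proof is correct, but it takes a genuinely different route from the paper's. The paper first treats differentiable $f$: it picks the largest $k$ such that $f$ is supported at some point by a $(k+1)$-dimensional corner function $C=\max_j\{\ell_j+b_j\}$, shows by a maximality argument that $\frac{d}{dt}(f-\ell_1)(y+tw_q)=0$ along a basis $\{w_q\}$ of $\bigcap_j\mathrm{Ker}(\ell_j-\ell_1)$, and reads off $X_f$, $v_f$, $c_f$ from this; essential coercivity of $c_f$ is then free because $C|_{X_f}$ is a full-dimensional corner on $X_f$. Non-differentiable $f$ is handled by a separate reduction via a $C^1$ convex approximant with $|f-g|\le 1$ (quoting \cite{Azagra}). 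You instead work directly with the lineality space $L_f$ of affinity directions and the recession function $f^\infty$, and your essential step is the convex-analysis equivalence that a finite convex function has trivial lineality space iff it is essentially coercive, which you establish via the support-function characterization of $\partial c^\infty(0)$ having nonempty interior together with the segment-convexity argument. This is self-contained, avoids the two-case split (no differentiability is ever needed and no approximation theorem is invoked), and is closer in spirit to Rockafellar-style convex analysis; what it gives up is the extra structural information the paper's proof yields as a byproduct, namely that $\dim X_f$ equals the largest $k$ for which $f$ admits a supporting $(k+1)$-dimensional corner. Both your uniqueness argument and your proof of the \emph{moreover} clause are sound and, again, roughly parallel to the paper's but phrased through $L_f$ rather than through the corner construction.
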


The following Proposition shows that the directions $X_f$ given by these decompositions are stable by approximation.

\begin{proposition}
With the notation of the preceding theorem, if $f, g:\R^n\to\R$ are convex functions and $A$ is a positive number such that
$
f(x)\leq g(x)+A
$ for all $x\in \R^n$,
then $X_f\subseteq X_g$.

In particular, if $|f-g|\leq A$ then $X_f=X_g$.
\end{proposition}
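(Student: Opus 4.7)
The plan is to reduce the claim to the ``moreover'' part of Theorem \ref{decomposition theorem}, which says: if $g$ is essentially coercive in the direction of a subspace $Y$, then $Y \subseteq X_g$. So it suffices to show that $g$ is essentially coercive in the direction of $X_f$.

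First, I would verify that $f$ itself is essentially coercive in the direction of $X_f$. Write $f(x) = c_f(P_{X_f}(x)) + \langle v_f, x\rangle$. Since $c_f:X_f \to \R$ is essentially coercive, there is a linear $\ell_f: X_f\to\R$ such that $(c_f-\ell_f)(y_k)\to\infty$ whenever $|y_k|\to\infty$ in $X_f$. Define the linear function $\tilde\ell:\R^n\to\R$ by
\[
\tilde\ell(x) := \ell_f(P_{X_f}(x)) + \langle v_f, x\rangle.
\]
Then for any sequence $(x_k)\subset\R^n$ with $|P_{X_f}(x_k)|\to\infty$, we have
\[
(f-\tilde\ell)(x_k) = c_f(P_{X_f}(x_k)) - \ell_f(P_{X_f}(x_k)) \longrightarrow \infty,
\]
so $f$ is essentially coercive in the direction of $X_f$ via $\tilde\ell$.

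Next, from the hypothesis $f \leq g + A$, we obtain $g - \tilde\ell \geq f - \tilde\ell - A$, so for the same sequences $(x_k)$ with $|P_{X_f}(x_k)|\to\infty$ we get $(g-\tilde\ell)(x_k)\to\infty$. Thus $g$ is essentially coercive in the direction of $X_f$ (via the same $\tilde\ell$). Applying the ``moreover'' clause of Theorem \ref{decomposition theorem} to $g$ with $Y=X_f$ yields $X_f\subseteq X_g$.

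For the final sentence, if $|f-g|\leq A$ then also $g\leq f + A$, and the argument just given with the roles of $f$ and $g$ swapped gives $X_g\subseteq X_f$; combined with the previous inclusion this yields $X_f = X_g$. There is no substantial obstacle here: once one unwinds the definition of essential $P_{X_f}$-coercivity and uses that the decomposition of $f$ automatically produces a witnessing linear functional on all of $\R^n$, the bound $f\le g+A$ transfers the coercivity directly from $f$ to $g$, and the uniqueness/maximality property of $X_g$ furnished by Theorem \ref{decomposition theorem} closes the argument.
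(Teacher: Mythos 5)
Your proof is correct and follows the same route as the paper: verify that $f$ (hence, by the bound $f\le g+A$, also $g$) is essentially coercive in the direction of $X_f$, then invoke the ``moreover'' clause of Theorem \ref{decomposition theorem}. The paper merely states these two steps without writing out the witnessing linear functional $\tilde\ell$, which you helpfully make explicit.
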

\begin{proof}
The inequality $f(x)\leq g(x)+A$ and the essential coercivity of $f$ in the direction $X_{f}$ implies that $g$ is essentially coercive in the direction $X_f$. Then $X_f\subseteq X_g$ by the last part of Theorem \ref{decomposition theorem}.
\end{proof}

We are finally ready to state the announced refinement of Theorem \ref{MainTheoremWithThereExistsX} which characterizes precisely what $1$-jets $(f,G)$ admit extensions $(F, \nabla F)$ such that $F\in C^{1}_{\textrm{conv}}(\R^n)$ and $X_{F}$ coincides with a prescribed linear subspace $X$ of $\R^n$.

\begin{theorem}\label{MainTheorem1WithPThenP}
Given an arbitrary subset $E$ of $\Rn$, a linear subspace $X\subset\R^n$, the orthogonal projection $P:=P_{X}:\R^n\to X$, and two functions $f:E \to \R, \: G: E \to \Rn$, the following is true. There exists a convex function $F:\R^n\to\R$ of class $C^1$ such that $F_{|_E}=f$, $(\nabla F)_{|_E} =G$, and $X_{F}=X$, if and only if the following conditions are satisfied.
\begin{itemize}
\item[$(i)$] $G$ is continuous and $f(x)\geq f(y)+\langle G(y), x-y\rangle$ for all $x, y\in E$.
\item[$(ii)$] If $(x_k)_k \subset E$ is a sequence for which $\lim_{k\to\infty} | G(x_k) |=+\infty,$ then
$$
\lim_{k\to\infty} \frac{\langle G(x_k), x_k \rangle - f(x_k)}{|G(x_k)|} = +\infty.
$$
\item[$(iii)$] $Y:=\textrm{span}\left(\{G(x)-G(y) : x, y\in E\}\right)\subseteq X$.
\item[$(iv)$] If $Y \neq X$ and we denote $k= \dim Y$ and $d= \dim X$, there exist points $p_1, \ldots, p_{d-k} \in \R^n \setminus \overline{E}$, numbers $\beta_1,\ldots, \beta_{d-k} \in \R$, and vectors $w_1, \ldots, w_{d-k} \in \R^n$  such that:
\begin{itemize}
\item[(a)] $X= \textrm{span} \left( \lbrace u-v \: : \: u,v\in G(E)\cup \lbrace w_1,\ldots,w_{d-k} \rbrace \rbrace \right).$
\item[(b)] $\beta_j > \max_{1 \leq i \neq j \leq d-k} \lbrace \beta_i + \langle w_i, p_j -p_i \rangle \rbrace$ for all $1 \leq j \leq d-k.$
\item[(c)] $\beta_j > \sup_{z\in E, \: |G(z)| \leq N} \lbrace f(z)+ \langle G(z), p_j -z \rangle \rbrace$ for all $1 \leq j \leq d-k$ and $N \in \N.$
\item[(d)] $\inf_{x\in E, \: |P(x)| \leq N}\lbrace f(x) - \max_{1 \leq j \leq d-k} \lbrace \beta_j + \langle w_j, x-p_j\rangle \rbrace \rbrace >0$ for all $N \in \N.$
\end{itemize}
\item[$(v)$] If $(x_k)_k, (z_k)_k$ are sequences in $E$ such that $(P (x_k))_k$ and $(G(z_k))_k$ are bounded and
$$
\lim_{k\to\infty} \left( f(x_k)-f(z_k)- \langle G(z_k), x_k-z_k \rangle \right) = 0,
$$
then $\lim_{k\to\infty} | G(x_k)-G(z_k) | = 0$.
\end{itemize}
\end{theorem}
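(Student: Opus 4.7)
Suppose such an $F$ exists. Conditions (i) and (ii) are standard consequences for a convex $F\in C^1_{\textrm{conv}}(\R^n)$ that is essentially $P_X$-coercive. The decomposition $F=c_F\circ P_X+\langle v_F,\cdot\rangle$ supplied by Theorem \ref{decomposition theorem} yields (iii), since $\nabla F(x)-v_F=\nabla c_F(P_X(x))\in X$ for every $x$. Condition (v) follows from continuity of $\nabla F$ together with the essential coercivity of $c_F$ on $X$ (which rules out non-trivial affine segments in directions of $X$). For (iv), when $Y\subsetneq X$, one chooses points $p_1,\ldots,p_{d-k}\in\R^n\setminus\overline E$ at which the gradients $w_j:=\nabla F(p_j)$ successively complement $G(E)$ so that the collection spans $X$ by differences (possible since $\textrm{span}\{\nabla F(x)-\nabla F(y):x,y\in\R^n\}=X$ by the same decomposition); placing the $p_j$ generically so that no $p_j$ lies on a common affine piece of $F$ with another $p_i$ or with any point of $E$, one sets $\beta_j:=F(p_j)$ and reads off (a)--(d) as strict versions of the subdifferential inequality.

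\textbf{Sufficiency, augmentation step.} Assume (i)--(v). If $Y\neq X$, extend $f$ and $G$ to $E^*:=E\cup\{p_1,\ldots,p_{d-k}\}$ by $f(p_j):=\beta_j$, $G(p_j):=w_j$. A direct case analysis shows that the augmented jet still satisfies (i), (ii) and (v): between two new points the subdifferential inequality is (b); with the subgradient at an old point and upper point new, it is (c) combined with (ii) (which forces $f(z)+\langle G(z),p_j-z\rangle\to-\infty$ as $|G(z)|\to\infty$, so that the supremum in (c) effectively controls all $z\in E$); with the subgradient at a new point and upper point old, it is (d). Conditions (ii) and (v) for $E^*$ reduce to those for $E$, because the $p_j$ are finitely many and the strict gaps in (b)--(d) prevent the residual $f(x_k)-f(z_k)-\langle G(z_k),x_k-z_k\rangle$ from tending to $0$ along any sequence that visits the $p_j$. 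Finally, $\textrm{span}\{G(x)-G(y):x,y\in E^*\}=X$ by (a), and $G$ remains continuous on $E^*$ because the $p_j\notin\overline E$ are isolated in $E^*$. We may therefore rename $E^*$ to $E$ and assume from now on that $\textrm{span}\{G(x)-G(y):x,y\in E\}=X$.

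\textbf{Sufficiency, reduction to a coercive problem on $X$, and assembly.} Fix $x_0\in E$ and set $v:=P_{X^\perp}(G(x_0))\in X^\perp$, so that $G(x)-v\in X$ for every $x\in E$. Define $\tilde f:=f-\langle v,\cdot\rangle$ and $\tilde G:=G-v$. If $x,y\in E$ satisfy $P_X(x)=P_X(y)$, then $x-y\in X^\perp$; applying (i) in both directions gives $\tilde f(x)=\tilde f(y)$, and applying (v) to the constant sequences $x_k\equiv x$, $z_k\equiv y$ (whose residual vanishes identically by the previous equality) gives $\tilde G(x)=\tilde G(y)$. The push-down
$$
h(P_X(x)):=\tilde f(x),\qquad H(P_X(x)):=\tilde G(x)
$$
is therefore well-defined on $P_X(E)\subset X$, and a direct computation using $v\in X^\perp$ and $\tilde G(y)\in X$ yields
$$
f(x)-f(y)-\langle G(y),x-y\rangle=h(P_X(x))-h(P_X(y))-\langle H(P_X(y)),P_X(x)-P_X(y)\rangle.
$$
This identity, together with $\textrm{span}\{H(u)-H(u'):u,u'\in P_X(E)\}=X$, transfers (i), (ii) and (v) of the theorem to conditions (i)--(iv) of Corollary \ref{MainCorollaryWithEssentiallyCoerciveExtensions} for $(h,H)$ in the ambient space $X$. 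The corollary produces an essentially coercive $c\in C^1_{\textrm{conv}}(X)$ extending $(h,H)$, and setting $F(x):=c(P_X(x))+\langle v,x\rangle$ we obtain $F\in C^1_{\textrm{conv}}(\R^n)$ with $F|_E=f$, $\nabla F|_E=G$, and $X_F=X$ by the uniqueness part of Theorem \ref{decomposition theorem}. The most delicate point is verifying continuity of $H$ on $P_X(E)$: points of $P_X(E)$ may be close while their lifts in $E$ drift to infinity in the $X^\perp$-direction, and condition (v) is tailored precisely so that applying it with $z_k\equiv x$ (after using (ii) to show that $|G(x_k)|$ must stay bounded) forces $|G(x_k)-G(x)|\to 0$, hence $|H(P_X(x_k))-H(P_X(x))|\to 0$.
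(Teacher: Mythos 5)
Your overall \emph{structure} is reasonable and the necessity part (while terse, especially for $(v)$, where the paper needs a genuine lemma about differentiable convex functions rather than just ``continuity of $\nabla F$ plus essential coercivity'') follows the same outline as the paper. The sufficiency direction, however, has a genuine logical gap.

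Your sufficiency proof reduces, after the augmentation step, to a coercive extension problem on $X$ and then invokes Corollary~\ref{MainCorollaryWithEssentiallyCoerciveExtensions}. But in this paper that corollary is \emph{deduced from} Theorem~\ref{MainTheorem1WithPThenP}: the authors explicitly say ``It is clear that Theorem~\ref{MainTheoremWithThereExistsX} and Corollaries~\ref{MainCorollaryWithEssentiallyCoerciveExtensions} and \ref{corollary with Y equals span G(x)-G(y)} are immediate consequences of the above theorem.'' So appealing to the corollary while proving the theorem is circular unless you supply an independent proof of the corollary. Note that the reduction does not decrease dimension (when $X=\R^n$ you are invoking the corollary in $\R^n$ itself), so you cannot close the loop by induction. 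The paper sidesteps this by doing the hard construction directly: it takes the minimal convex extension $m=\sup_{y\in E^*}\{f(y)+\langle G(y),\cdot-y\rangle\}$ on $\R^n$, proves $X_m=X$ (Lemma~\ref{subspaceminimal}), decomposes $m=c\circ P_X+\langle v,\cdot\rangle$, shows $c$ is differentiable on $\overline{P_X(E^*)}$ (Lemma~\ref{differentiabiltyclosurec^*}), and then runs the Whitney extension + approximation + convex envelope (Kirchheim--Kristensen) machinery encapsulated in Lemma~\ref{extensionconvexcoercive}. That lemma is the analytic engine of the whole sufficiency argument, and your proposal never produces an equivalent of it; you have transferred, not solved, the problem.

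The pieces of your argument that are genuinely your own are sound: the push-down identity
\[
f(x)-f(y)-\langle G(y),x-y\rangle=h(P_X(x))-h(P_X(y))-\langle H(P_X(y)),P_X(x)-P_X(y)\rangle
\]
is correct, the well-definedness of $(h,H)$ on $P_X(E)$ via $(i)$ and $(v)$ applied to constant sequences is correct, and the sketch for continuity of $H$ (use $(ii)$ to rule out $|H(u_k)|\to\infty$ along $u_k\to u$, then $(v)$ with $z_k\equiv$ a lift of $u$) does work after some algebra with the identity above. This push-down is cleaner than the paper's route through the minimal extension $m$ and its decomposition; the paper, by contrast, does not need the well-definedness argument because it only ever manipulates the globally defined $m$ and gets differentiability of the factor $c$ as a consequence. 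To turn your sketch into a complete proof you would need to either (a) give an independent, self-contained proof of Corollary~\ref{MainCorollaryWithEssentiallyCoerciveExtensions} (the authors indicate this can be done by adapting the proof of Theorem~1.10 in \cite{AzagraMudarra2017PLMS}, but that argument is not reproduced in the present paper), or (b) import something like Lemma~\ref{extensionconvexcoercive} and finish the construction on $X$ directly; either way, the Whitney + convex-envelope step is not optional.
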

In particular, by considering the case that $X=\R^n$, we obtain a characterization of the $1$-jets which admit $C^1$ convex extensions which are essentially coercive in $\R^n$.

It is clear that Theorem \ref{MainTheoremWithThereExistsX} and Corollaries \ref{MainCorollaryWithEssentiallyCoerciveExtensions} and \ref{corollary with Y equals span G(x)-G(y)} are immediate consequences of the above theorem.
The proof of Theorem \ref{MainTheorem1WithPThenP} will be given in Sections 3 and 4. 

In the special case that the function $G$ of the above Theorem is bounded, one should expect to find Lipschitz convex functions $F\in C^{1}(\R^n)$ such that $(F, \nabla F)$ extends $(f, G)$ and $\textrm{Lip}(F)\lesssim \|G\|_{\infty}$. Notice that this kind of control of $\textrm{Lip}(F)$ in terms of $\sup_{y\in E} |G(y)|$ solely cannot be obtained, in general, for nonconvex jets, but it is possible in the convex case, at least when $E$ is compact; see the comments after \cite[Theorem 1.10]{AzagraMudarra2017PLMS}. The next result tells us that this is indeed feasible, and moreover shows that the technical conditions of $(iv)$ in Theorem \ref{MainTheorem1WithPThenP} can be replaced (just in this Lipschitz case) by a nicer geometric condition which tells us that the complement of the closure of $E$ in $\R^n$ contains the union of a certain finite collection of cones.

\begin{theorem}\label{maintheoremlipschitzc1}
Given an arbitrary subset $E$ of $\Rn$, a linear subspace $X\subset\R^n$, the orthogonal projection $P:=P_{X}:\R^n\to X$, and two functions $f:E \to \R, \: G: E \to \Rn$, the following is true. There exists a Lipschitz convex function $F:\R^n\to\R$ of class $C^1$ such that $F_{|_E}=f$, $(\nabla F)_{|_E} =G$, and $X_{F}=X$, if and only if the following conditions are satisfied.
\begin{itemize}
\item[$(i)$] $G$ is continuous and bounded and $f(x)\geq f(y)+\langle G(y), x-y\rangle$ for all $x, y\in E$.
\item[$(ii)$] $Y:=\textrm{span}\left(\{G(x)-G(y) : x, y\in E\}\right)\subseteq X$.
\item[$(iii)$] If $Y \neq X$ and we denote $k= \dim Y$ and $d= \dim X$, there exist points $p_1, \ldots, p_{d-k} \in \R^n \setminus \overline{E}$, a number $\varepsilon \in (0,1),$ and linearly independent normalized vectors $w_1, \ldots, w_{d-k} \in X \cap Y^{\perp}$ such that, for every $j=1, \ldots , d-k,$ the cone $V_j := \lbrace x\in \R^n \: : \:  \varepsilon \langle w_j,x- p_j \rangle \geq |P_Y(x-p_j)|\rbrace$ does not contain any point of $\overline{E}.$ Here $P_Y:\R^n\to Y$ denotes the orthogonal projection onto $Y$.
\item[$(iv)$] If $(x_k)_k, (z_k)_k$ are sequences in $E$ such that $(P_X(x_k))_k$ is bounded and
$$
\lim_{k\to\infty} \left( f(x_k)-f(z_k)- \langle G(z_k), x_k-z_k \rangle \right) = 0,
$$
then $\lim_{k\to\infty} | G(x_k)-G(z_k) | = 0$.
\end{itemize}
Moreover, there exists a constant $C(n)>0$ only depending on $n$ such that, whenever these conditions are satisfied, the extension $F$ can be taken so that
$$
\lip(F) = \sup_{x\in \R^n} | \nabla F(x)| \leq C(n) \sup_{y\in E} |G(y)|.
$$
\end{theorem}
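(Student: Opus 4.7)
The plan is to reduce Theorem \ref{maintheoremlipschitzc1} to Theorem \ref{MainTheorem1WithPThenP}. When $G$ is bounded, the correspondence is almost transparent: $(i)$ covers $(i)$, the boundedness of $G$ makes $(ii)$ of the main theorem vacuous, $(ii)$ here is $(iii)$ there, and $(iv)$ here is $(v)$ there with $P=P_X$. What requires work is the exchange of the cone condition $(iii)$ for the combinatorial condition $(iv)(a)$--$(d)$, and the upgrade of the resulting $C^1$ convex extension to one with $\lip(F)\le C(n)\|G\|_\infty$.

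For necessity, I would decompose $F=c_F\circ P+\langle v_F,\cdot\rangle$ via Theorem \ref{decomposition theorem}, with $v_F\in X^{\perp}$ and $c_F:X\to\R$ $C^1$ convex, Lipschitz, and essentially coercive. Then $(i)$--$(ii)$ reduce to the observation $G(y_1)-G(y_2)=\nabla c_F(Py_1)-\nabla c_F(Py_2)\in X$, and $(iv)$ reduces to a Bregman-divergence estimate for $c_F$ that is handled by Fenchel--Young together with the continuity of $c_F^{*}$ on the image of $\nabla c_F$. For the cone condition $(iii)$, I choose an orthonormal basis $w_1,\dots,w_{d-k}$ of $X\cap Y^{\perp}$; since $Y\perp w_j$, the quantity $\alpha_j:=G(y)\cdot w_j$ is constant on $E$, and essential coercivity of $c_F$ in the direction $w_j\in X$ forces the existence of a point $q_j\in X$ with $\nabla c_F(q_j)\cdot w_j>\alpha_j+\delta$ for some $\delta>0$. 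Lifting $q_j$ to $p_j\in\R^n\setminus\overline E$ and using monotonicity of $\nabla c_F$ together with $|\nabla c_F|\le\lip(c_F)$ gives $\delta\,\langle w_j,y-p_j\rangle\le 2\lip(c_F)\,|P_Y(y-p_j)|$ for all $y\in E$; halving $\varepsilon:=\delta/(4\lip(c_F))$ upgrades this to strict inequality and produces $(iii)$.

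For sufficiency, I set $v:=P_{X^{\perp}}G(y_0)$, which is independent of $y_0\in E$ by $(ii)$. From $(i)$ and $(ii)$, the function $\tilde f:=f-\langle v,\cdot\rangle$ factors through $P$ on $E$, i.e., $\tilde f(y)=h(Py)$ for some $\|G\|_\infty$-Lipschitz convex $h:P(E)\to\R$. Taking $p_j, w_j, \varepsilon$ from $(iii)$, I set $w_j^{\mathrm{main}}:=v+\sigma w_j$ for small $\sigma>0$ and $\beta_j:=s_j+\eta$, where $s_j:=\sup_{z\in E}\{f(z)+\langle G(z),p_j-z\rangle\}\le f(y_0)+\|G\|_\infty|p_j-y_0|<\infty$ and $\eta>0$ is small. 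Then $(a)$ follows because the differences $w_j^{\mathrm{main}}-G(y_0)=\sigma w_j$ together with the $Y$-valued differences $G(y_1)-G(y_2)$ span $(X\cap Y^{\perp})+Y=X$; $(b)$ is arranged by a mild perturbation of the $\beta_j$; and $(c)$ holds by construction. For $(d)$, the identity $f(x)-\langle w_j^{\mathrm{main}},x\rangle=h(Px)-\sigma\langle w_j,Px\rangle$ on $E$ shows that the expression inside the infimum depends only on $Px$; the cone condition, applied on the slab $\{|Px|\le N\}$, bounds $\langle w_j,Px-Pp_j\rangle$ uniformly, and choosing $\sigma,\eta$ small enough (in terms of quantities depending on $h$ and the $p_j$'s coming from $(iii)$) produces the required positive infimum for every $N$.

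Having verified the hypotheses of Theorem \ref{MainTheorem1WithPThenP}, I obtain a $C^1$ convex extension $F_0$ with $X_{F_0}=X$. To secure the Lipschitz bound, I finally replace $F_0$ by the Moreau--Yosida regularization of its Lipschitz truncation at level $C(n)\|G\|_\infty$ --- equivalently, by the largest $C(n)\|G\|_\infty$-Lipschitz convex minorant of $F_0$ that still interpolates the jet on $E\cup\{p_1,\dots,p_{d-k}\}$ --- and check that this procedure preserves $C^1$ smoothness, convexity, and the decomposition $X_F=X$. The main obstacle I foresee is verifying $(d)$ cleanly: the tension between the unboundedness of $E$ in the $X^{\perp}$-direction and the slab-by-slab positivity in $(d)$ is resolved by the factorization through $P$, but care is needed to ensure $\sigma$ can be chosen so that $h-\sigma\langle w_j,\cdot\rangle$ remains bounded below on $P(E)$; quantitatively, this is where the cone condition earns its keep. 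For the Lipschitz upgrade I plan to imitate the arguments of \cite{AzagraMudarra2017PLMS} for compact $E$, adapted to the present non-compact setting via the direct-sum decomposition $\R^n=X\oplus X^{\perp}$.
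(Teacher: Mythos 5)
Your reduction strategy (trade the cone condition $(iii)$ for the combinatorial data $(iv)(a)$--$(d)$ of Theorem~\ref{MainTheorem1WithPThenP}, then secure the Lipschitz bound a posteriori) is the natural first idea, and your observations that $G$ bounded makes $(ii)$ of Theorem~\ref{MainTheorem1WithPThenP} automatic, that $f-\langle v,\cdot\rangle$ factors through $P_Y$, and that the cone condition bounds $\langle w_j,Px-Pp_j\rangle$ uniformly on slabs, are all on target. However, there are three genuine gaps.

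First, in the sufficiency direction your scheme keeps the points $p_j$ fixed and only tunes the scalars $\sigma,\eta$, but condition $(iv)(b)$ of Theorem~\ref{MainTheorem1WithPThenP} cannot be obtained this way. Indeed, if $\beta_j>\beta_i+\langle w_i^{\mathrm{main}},p_j-p_i\rangle$ and simultaneously $\beta_i>\beta_j+\langle w_j^{\mathrm{main}},p_i-p_j\rangle$, adding the two gives $\sigma\langle w_i-w_j,\,p_i-p_j\rangle>0$, a geometric condition on the $p_j$'s themselves which the points furnished by $(iii)$ need not satisfy; no choice of the $\beta_j$'s (nor shrinking $\sigma$) can repair it. The paper resolves exactly this by replacing $p_j$ with $q_j:=p_j+Tw_j$ for $T$ large, so that $\langle w_i-w_j,\,q_i-q_j\rangle=\langle w_i-w_j,\,p_i-p_j\rangle+T|w_i-w_j|^2>0$; moreover the magnitude $\varepsilon\alpha$ of $G(q_j)-v$ is tied to the coercivity constant $\alpha$ of the factor $c$ in the decomposition of the minimal extension $m$, and both this and the choice of $T$ are needed to verify $(iv)(c)$ and $(iv)(d)$. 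Your "choose $\sigma$, $\eta$ small" plan does not have the degrees of freedom to do this.

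Second, in the necessity of $(iii)$ you fix an \emph{arbitrary orthonormal} basis $w_1,\dots,w_{d-k}$ of $X\cap Y^\perp$ and then look for $p_j$ with $\nabla c_F(Pp_j)\cdot w_j>\alpha_j+\delta$. Expanding the monotonicity inequality $\langle\nabla c_F(Py)-\nabla c_F(Pp_j),\,Py-Pp_j\rangle\geq 0$ along $Y\oplus[w_1]\oplus\dots\oplus[w_{d-k}]$ produces, besides the $Y$-term you control via $\lip(c_F)$ and the favourable $w_j$-term, the cross terms $\big(\alpha_i-\langle\nabla c_F(Pp_j),w_i\rangle\big)\langle w_i,\,y-p_j\rangle$ for $i\neq j$, over which you have no control. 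The paper sidesteps this by \emph{not} fixing the $w_j$ in advance: after choosing $p_j$ with $\nabla F(p_j)-G(x_0)\in X\setminus Y_{j-1}$, it takes $w_j$ to be the normalized $Y^\perp$-component of $\nabla F(p_j)-G(x_0)$, so that $\nabla F(p_j)-G(x_0)\in Y\oplus[w_j]$ and all cross terms vanish identically; these $w_j$ are linearly independent and normalized but not orthonormal, which is precisely what the statement of $(iii)$ demands and all it needs.

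Third, the final Lipschitz upgrade is not justified as written. The Moreau--Yosida regularization of the Lipschitz truncation and the largest $L$-Lipschitz convex minorant are two different objects (and the former fails to interpolate the data at all, since Moreau--Yosida strictly decreases a function wherever it is not affine). The Lipschitz truncation $F_L(x)=\inf_y\{F_0(y)+L|x-y|\}$ does satisfy $F_L=F_0$ and $\nabla F_L=\nabla F_0$ on $E$ whenever $|\nabla F_0|\leq L$ there, and one can check $X_{F_L}=X$ using the $3K$-Lipschitz minorant $m^*$; but it is not clear that $F_L$ inherits $C^1$ smoothness from $F_0$ (the gradient map of the inf-convolution $F_0\,\square\,L|\cdot|$ is single-valued at $x$ only when all minimizers $y$ have the same $\nabla F_0(y)$, which a merely $C^1$ convex $F_0$ does not guarantee). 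The paper avoids this issue entirely by reproving the Whitney-type extension lemma (Lemma~\ref{extensionconvexcoercivelipschitz}) with explicit Lipschitz control, replacing the $d(\cdot,A)^2$ cut-off with a function $\theta_\varepsilon\circ d(\cdot,A)$ whose Lipschitz constant is controlled, and invoking Whitney's $C^\infty$ approximation with a Lipschitz bound.
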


\medskip

Finally, let us turn our attention to a geometrical problem which is closely related to our results.

\begin{problem}\label{interpolating problem for convex hypersurfaces}
Given an arbitrary subset $E$ of $\R^n$ and a unitary vector field $N:E\to\R^n$, what conditions will be necessary and sufficient in order to guarantee the existence of a convex hypersurface $M$ of class $C^1$ with the properties that $E\subset M$ and $N(x)$ is normal to $M$ at each $x\in E$?.
\end{problem}

Our solution to this problem is as follows. We say that a subset $W$ of $\R^n$ is a (possibly unbounded) convex body provided that $W$ is closed and convex, with nonempty interior. Assuming, as we may, that $0\in\textrm{int}(W)$, we will say that $W$ is of class $C^1$ provided that its Minkowski functional
$$
\mu_{W}(x)=\inf\{\lambda>0 : \tfrac{1}{\lambda}x\in W\}
$$
is of class $C^1$ on the open set $\R^n\setminus \mu_{W}^{-1}(0)$. This is equivalent to saying that $W$ can be locally parametrized as a graph $(x_1, \ldots, x_{n-1}, g(x_{1}, \ldots, x_{n-1}))$ (coordinates taken with respect to an appropriate permutation of the canonical basis of $\R^n$), where $g$ is of class $C^{1}$. We will denote
$$
n_{W}(x)=\frac{\nabla\mu_{W}(x)}{|\nabla \mu_{W}(x)|}, \quad x\in\partial W,
$$
the outer normal to $\partial W$.

\begin{theorem}\label{main geometric corollary}
Let $E$ be an arbitrary subset of $\R^n$, $N:E\to\mathbb{S}^{n-1}$ a continuous mapping, $X$ a linear subspace of $\R^n$, and $P:\R^n\to X$ the orthogonal projection. Then there exists a (possibly unbounded) convex body $W$ of class $C^1$ such that $E\subset \partial W$, $0\in\textrm{int}(W)$, $N(x)=n_{W}(x)$ for all $x\in E$, and $X=\textrm{span}\left( n_{W}(\partial W)\right)$, if and only if the following conditions are satisfied:
\begin{enumerate}
\item $\langle N(y), x-y\rangle\leq 0$ for all $x, y\in E$.
\item For all sequences $(x_k)_k$, $(z_k)_k$ contained in $E$ with $(P(x_k))_k$ bounded, we have that
$$
\lim_{k\to\infty}\langle N(z_k), x_k-z_k\rangle=0 \implies \lim_{k\to\infty}|N(z_k)-N(x_k)|=0.
$$
\item $0<\inf_{y\in E}\langle N(y), y\rangle$.
\item Denoting $d=\textrm{dim}(X)$, $Y=\textrm{span} (N(E))$, $\ell=\textrm{dim}(Y)$, we have that $Y\subseteq  X$, and if $Y\neq X$ and $P_Y:\R^n\to Y$ is the orthogonal projection then there exist linearly independent normalized vectors $w_1, \ldots, w_{d-k} \in X \cap Y^{\perp}$, points $p_1, \ldots, p_{d-\ell} \in \R^n$, and a number $\varepsilon\in (0, 1)$ such that
$$
\left( \overline{E}\cup \{0\}\right)\cap \left( \bigcup_{j=1}^{d-\ell} V_j\right)=\emptyset,
$$
where $V_{j}:=\lbrace x\in \R^n \: : \:  \varepsilon \langle w_j,x- p_j \rangle \geq |P_Y(x-p_j)|\rbrace$ for every $j=1, \ldots, d- \ell.$
\end{enumerate}
\end{theorem}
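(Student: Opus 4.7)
\emph{Strategy.} The plan is to reduce Problem \ref{interpolating problem for convex hypersurfaces} to the $1$-jet extension Theorem \ref{maintheoremlipschitzc1} by representing the putative convex body as $W=\{F\leq 1\}$ for a $C^{1}$ convex $F$. Motivated by Euler's identity for the Minkowski functional $\mu_{W}$, which forces $\nabla\mu_W(y)=N(y)/\langle N(y),y\rangle$ at $y\in\partial W$, one takes $f\equiv 1$ and $G(y):=N(y)/\langle N(y),y\rangle$ on $E$; by (3) the map $G$ is continuous and bounded by $1/\inf_{E}\langle N(\cdot),\cdot\rangle$.

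\emph{Necessity.} Suppose such a $W$ exists. The assumption $\textrm{span}(n_W(\partial W))=X$ forces $n_W(\partial W)\subset X$, so every supporting hyperplane of $W$ is constant along $X^{\perp}$; hence $W=W'\oplus X^{\perp}$ is a cylinder over a $C^{1}$ convex body $W'\subset X$ with $0\in\textrm{int}(W')$. Condition (1) is the supporting hyperplane inequality; (3) follows from $B(0,r)\subseteq W'\Rightarrow \langle N(y),y\rangle\geq r$; and (2) follows from the uniform continuity of the Gauss map of $\partial W'$ after one checks that $\langle N(z_k),z_k\rangle$ is bounded (using $N(z_k)\in X$ together with the boundedness of $P_X(x_k)$). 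For (4), when $Y:=\textrm{span}(N(E))\subsetneq X$, one uses that the image of the Gauss map of $\partial W'$ spans $X$: pick $p_j\in\partial W'\setminus\overline{E}$ so that the vectors $w_j:=n_{W'}(p_j)$ form a basis of $X\cap Y^{\perp}$; for $\varepsilon>0$ small enough the thin cones $V_j$ miss $\overline{E}$ (by transversality, since $\partial W$ is tangent to a $Y$-hyperplane at every $y\in E$) and miss $0$ (since $\langle w_j,p_j\rangle\geq r>0$).

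\emph{Sufficiency.} Assume (1)--(4). The decisive trick to guarantee $F(0)<1$ is to augment the jet: by (3) we have $0\notin\overline{E}$, so set $E^{*}:=E\cup\{0\}$, $f^{*}(0):=1-\delta$ for a fixed $\delta\in(0,1)$, and $G^{*}(0):=0$. We verify the hypotheses of Theorem \ref{maintheoremlipschitzc1} for $(f^{*},G^{*})$ and the given $X$. The convexity inequality reduces to (1) on pairs in $E$, and to $1\geq 1-\delta$, $1-\delta\geq 0$, $0\geq 0$ on pairs involving $0$. The jet span $Y^{*}:=\textrm{span}\{G^{*}(x)-G^{*}(y):x,y\in E^{*}\}$ equals $\textrm{span}(G(E))=\textrm{span}(N(E))=Y\subseteq X$ (the differences $G(y)-G^{*}(0)=G(y)$ already contribute all of $G(E)$). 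When $Y\neq X$, condition (iii) of Theorem \ref{maintheoremlipschitzc1} coincides with condition (4) of the present theorem because $\overline{E^{*}}=\overline{E}\cup\{0\}$, so the same $p_j,w_j,\varepsilon$ serve both. Condition (iv) on sequences is trivially satisfied for pairs hitting $0$ (the expression $f^{*}(x_k)-f^{*}(z_k)-\langle G^{*}(z_k),x_k-z_k\rangle$ is identically $\pm\delta$ or $0$, so the hypothesis either fails or is automatic), while for pairs in $E$ it reduces to (2) after dividing by $\langle N(z_k),z_k\rangle$; here $\langle N(z_k),z_k\rangle$ is forced to lie in a bounded interval $[c_0,B]$ because $c_0:=\inf_{E}\langle N(\cdot),\cdot\rangle>0$, $N(z_k)\in X$, $|P_X(x_k)|$ is bounded, and the ratio $\langle N(z_k),x_k-z_k\rangle/\langle N(z_k),z_k\rangle$ would tend to $-1$ otherwise.

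\emph{Conclusion and main obstacle.} Theorem \ref{maintheoremlipschitzc1} now produces a Lipschitz convex $F\in C^{1}(\R^n)$ with $F|_{E^{*}}=f^{*}$, $\nabla F|_{E^{*}}=G^{*}$, and $X_{F}=X$. Set $W:=\{F\leq 1\}$: since $F(0)=1-\delta<1$ one has $0\in\textrm{int}(W)$, and $E\subset\partial W$ because $F|_{E}=1$. Any zero of $\nabla F$ would be a global minimum of $F$ with value at most $F(0)<1$, so $\nabla F\neq 0$ on $\partial W=\{F=1\}$, which makes $W$ a convex body of class $C^{1}$ with $n_W=\nabla F/|\nabla F|=N$ on $E$. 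Theorem \ref{decomposition theorem} combined with $\nabla F|_{E}\subset X$ forces $v_{F}=0$ and $F=c_{F}\circ P_X$ with $X_{c_F}=X$; since $X_{c_F}=X$ prevents $c_F$ from being constant on any line, $\{c_F\leq 1\}$ is a line-free convex body in $X$ whose normal cone at the boundary spans $X$, giving $\textrm{span}(n_W(\partial W))=X$. The chief obstacle throughout is producing the strict inequality $F(0)<1$; this is exactly what the augmentation trick resolves, and its feasibility rests precisely on the extra requirement in (4) that the cones $V_j$ avoid $\{0\}$ (and not merely $\overline{E}$).
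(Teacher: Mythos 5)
Your overall strategy matches the paper's: augment the jet with a jet at the origin, apply Theorem \ref{maintheoremlipschitzc1} to get a Lipschitz $C^{1}$ convex $F$, and take $W$ as a sublevel set. The paper takes $f(0)=0$, $f|_{E}=1$, $G(0)=0$, $G(y)=\tfrac{2}{r}N(y)$ on $E$ for a fixed $r<\inf_{E}\langle N(\cdot),\cdot\rangle$, and then $W=F^{-1}(-\infty,1]$; you take $f^{*}(0)=1-\delta$, $G^{*}(y)=N(y)/\langle N(y),y\rangle$. Your choice is natural (Euler's identity), but it creates a gap you do not address.

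\textbf{Gap in the verification of condition $(iv)$.} With $G^{*}(y)=N(y)/\langle N(y),y\rangle$, condition $(iv)$ of Theorem \ref{maintheoremlipschitzc1} does \emph{not} ``reduce to $(2)$ after dividing.'' Your argument correctly forces $\langle N(z_k),z_k\rangle$ into a bounded interval and then obtains $\langle N(z_k),x_k-z_k\rangle\to 0$, whence $|N(x_k)-N(z_k)|\to 0$ by $(2)$. But the required conclusion is $|G^{*}(x_k)-G^{*}(z_k)|\to 0$, and $G^{*}$ depends on $N$ \emph{and} on the scalar $\langle N(\cdot),\cdot\rangle$. Writing $a_k=\langle N(x_k),x_k\rangle$, $b_k=\langle N(z_k),z_k\rangle$, one still needs $a_k-b_k\to 0$, which does hold (since $a_k-b_k=\langle N(x_k)-N(z_k),P_X(x_k)\rangle+\langle N(z_k),x_k-z_k\rangle$, both summands tending to $0$ by the boundedness of $P_X(x_k)$), but this is an extra step that your sketch omits. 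The paper's normalization $G(y)=\tfrac{2}{r}N(y)$ makes $|G(x_k)-G(z_k)|$ a fixed multiple of $|N(x_k)-N(z_k)|$, so $(iv)$ literally \emph{is} $(2)$; this is exactly what the constant $\tfrac{2}{r}$ is designed to buy.

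\textbf{Necessity.} The paper derives all four conditions by applying the (already established) necessity of $(i)$--$(iv)$ of Theorem \ref{maintheoremlipschitzc1} to $F=\theta\circ\mu_W$ (a Lipschitz $C^1$ convex truncation of a power of the Minkowski functional) on $E^{*}=E\cup\{0\}$. Your cylinder decomposition $W=W'\oplus X^{\perp}$ is a correct and attractive alternative viewpoint, and your direct arguments for $(1)$ and $(3)$ are fine. However, the step for $(4)$ is wrong as written: you ``pick $p_j\in\partial W'\setminus\overline{E}$ so that the vectors $w_j:=n_{W'}(p_j)$ form a basis of $X\cap Y^{\perp}$,'' but the normals $n_{W'}(p_j)$ lie in $X$ and need not lie in $Y^{\perp}$. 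You must pass to the normalized orthogonal projections onto $Y^{\perp}$ of the new normal directions, as the paper does in the proof of necessity of condition $(iii)$ of Theorem \ref{maintheoremlipschitzc1}, and the ``by transversality'' assertion that the thin cones miss $\overline{E}$ needs the quantitative argument of that proof (or an equivalent one) rather than a slogan. Finally, a slip: for $x_k=0$, $z_k\in E$ the quantity $f^{*}(x_k)-f^{*}(z_k)-\langle G^{*}(z_k),x_k-z_k\rangle$ equals $1-\delta$, not $-\delta$; your conclusion still stands since $1-\delta\neq 0$.
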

As before, in the case that $X=\textrm{span} (N(E))$, the above result is much easier to use.

\begin{corollary}\label{geometric corollary}
Let $E$ be an arbitrary subset of $\R^n$, $N:E\to\mathbb{S}^{n-1}$ a continuous mapping, $X$ a linear subspace of $\R^n$ such that $X=\textrm{span}(N(E))$, and $P:\R^n\to X$ the orthogonal projection. Then there exists a (possibly unbounded) convex body $W$ of class $C^1$ such that $E\subset \partial W$, $0\in\textrm{int}(W)$, $N(x)=n_{W}(x)$ for all $x\in E$, and $X=\textrm{span}\left( n_{W}(\partial W)\right)$, if and only if the following conditions are satisfied:
\begin{enumerate}
\item $\langle N(y), x-y\rangle\leq 0$ for all $x, y\in E$.
\item For all sequences $(x_k)_k$, $(z_k)_k$ contained in $E$ with $(P(x_k))_k$ bounded, we have that
$$
\lim_{k\to\infty}\langle N(z_k), x_k-z_k\rangle=0 \implies \lim_{k\to\infty}|N(z_k)-N(x_k)|=0.
$$
\item $0<\inf_{y\in E}\langle N(y), y\rangle$.
\end{enumerate}
\end{corollary}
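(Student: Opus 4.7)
My plan is to derive Corollary \ref{geometric corollary} as an immediate specialization of Theorem \ref{main geometric corollary}, taking the subspace $X$ in that theorem to be $\textrm{span}(N(E))$. With this choice, conditions (1), (2), (3) of the theorem coincide verbatim with conditions (1), (2), (3) of the corollary, so those require no separate verification.

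The only point worth checking is that condition (4) of the theorem becomes automatic. In its notation, $Y := \textrm{span}(N(E))$ equals $X$ by hypothesis, so $Y \subseteq X$ holds with equality. Consequently the clause beginning ``if $Y \neq X$'' --- the only place where the auxiliary points $p_j$, unit vectors $w_j \in X \cap Y^{\perp}$, and cones $V_j$ enter --- is vacuously satisfied and imposes no further constraint on $E$ or $N$. Thus the hypotheses of Theorem \ref{main geometric corollary} are equivalent, in this setting, to conditions (1), (2), (3) of the corollary. The sufficiency direction of the corollary then follows: the theorem produces a (possibly unbounded) convex body $W$ of class $C^1$ with $E \subset \partial W$, $0 \in \textrm{int}(W)$, $n_W(x) = N(x)$ for every $x\in E$, and $\textrm{span}(n_W(\partial W)) = X = \textrm{span}(N(E))$. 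For necessity, if such a $W$ exists then the hypothesis $X = \textrm{span}(n_W(\partial W)) = \textrm{span}(N(E))$ is met with this same $X$, and the necessity half of Theorem \ref{main geometric corollary} delivers conditions (1)--(4); the first three are exactly the conditions of the corollary.

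No step is a real obstacle at this level: all of the substantive analytic and geometric content --- the reformulation of the geometric interpolation problem as a $C^1$ convex $1$-jet extension problem, the appeal to Theorem \ref{MainTheorem1WithPThenP}, and the control of the global shape of $W$ via the decomposition of Theorem \ref{decomposition theorem} --- is already absorbed into the proof of Theorem \ref{main geometric corollary}. The corollary simply records the clean special case in which the directional data carried by $N$ on $E$ already spans the ambient normal subspace, so the auxiliary cone construction of condition (4) of the theorem is never triggered.
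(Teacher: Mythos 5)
Your proof is correct and coincides with the paper's intended derivation: after Theorem \ref{main geometric corollary} the paper simply remarks ``in the case that $X=\textrm{span}(N(E))$, the above result is much easier to use,'' i.e.\ condition (4) degenerates because $Y=X$, which is precisely the observation you make. No further comment is needed.
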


\section{Proof of Theorem \ref{decomposition theorem}.}

Let us first recall some terminology from \cite{Azagra}.
We say that a function $C:\R^n\to\R$ is a $k$-dimensional {\em corner function} on $\R^n$ if it is of the
form
    $$
C(x)=\max\{\, \ell_1 +b_1, \, \ell_2 +b_2, \, \ldots, \, \ell_k +b_k
\, \},
    $$
where the $\ell_j:\R^n\to\R$ are linear functions such that the
functions $L_{j}:\R^{n+1}=\R^n\times \R\to\R$ defined by $L_{j}(x,
x_{n+1})=x_{n+1}-\ell_j(x)$, $1\leq j\leq k$, are linearly
independent in $(\R^{n+1})^*$, and the $b_j\in\R$. This is equivalent to saying that the functions $\{\ell_2-\ell_1, \ldots, \ell_k -\ell_1\}$ are linearly independent in $(\R^n)^*$. 

We also say that a convex function $f:\R^n\to\R$ is supported by $C$ at a point $x\in \R^n$
provided we have $C\leq f$ and $C(x)=f(x)$.

Now let us prove Theorem \ref{decomposition theorem}. 

{\bf Case 1.} We will first assume that $f$ is differentiable (and therefore of class $C^1$, since $f$ is convex). If $f$ is affine, say $f(x)=a\langle u, x\rangle +b$, then the result is trivially true with $X=\{0\}$, $c(0)=b$, and $v=a u$. On the other hand, if $f$ is essentially coercive then the result also holds obviously with $X=\R^n$, $v=0$, and $c=f$. So we may assume that $f$ is neither affine nor essentially coercive. In particular there exist $x_0, y_0\in\R^n$ with $Df(x_0)\neq Df(y_0)$. It is then clear
that $L_1(x, x_{n+1})=x_{n+1}-Df(x_0)(x)$ and $L_2(x,
x_{n+1})=x_{n+1}-Df(y_0)(x)$ are two linearly independent linear
functions on $(\R^{n+1})^*$, hence $f$ is supported at $x_0$ by the
two-dimensional corner $x\mapsto \max\{f(x_0)+Df(x_0)(x-x_0),
f(y_0)+Df(y_0)(x-y_0)\}$. 

Let us then define $k$ as the greatest
integer number so that $f$ is supported at $x_0$ by a
$(k+1)$-dimensional corner. By assumption we have $1\leq k< n$.
Then we also have that there exist $\ell_1, \ldots, \ell_{k+1}\in
(\R^n)^{*}$ with $L_{j}(x, x_{n+1})=x_{n+1}-\ell_j(x)$, $j=1, \ldots,
k+1$, linearly independent in $(\R^{n+1})^{*}$, and $b_{1}, \ldots,
b_{k+1}\in\R$, so that $C=\max_{1\leq j\leq k+1}\{\ell_j +b_j\}$
supports $f$ at $x_0$.

Observe that the $\{L_{j}-L_1\}_{j=2}^{k+1}$ are linearly
independent in $(\R^{n+1})^{*}$, hence so are the
$\{\ell_{j}-\ell_1\}_{j=2}^{k+1}$ in $(\R^n)^{*}$, and therefore
$\bigcap_{j=2}^{k+1}\textrm{Ker}\, (\ell_{j}-\ell_1)$ has
dimension $n-k$. Then we can find linearly independent vectors
$w_1, \ldots, w_{n-k}$ such that $\bigcap_{j=2}^{k+1}\textrm{Ker}\,
(\ell_{j}-\ell_1)=\textrm{span}\{w_1, \ldots, w_{n-k}\}$.

Now, given any $y\in \R^n$, if $\frac{d}{dt} (f-\ell_1)(y+t
w_q)|_{t=t_{0}}\neq 0$ for some $t_0$ then
$Df(y+t_0 w_q)-\ell_1$ is linearly independent with
$\{\ell_{j}-\ell_1\}_{j=2}^{k+1}$, which implies that $(x,
x_{n+1})\mapsto x_{n+1}-Df(y+t_0w_q)$ is linearly independent with
$L_1, \ldots, L_{k+1}$, and therefore the function
$$x\mapsto \max\{\ell_1(x)+ b_1, \ldots, \ell_{k+1}(x)+b_{k+1},
Df(y+t_0 w_q)(x-y-t_0 w_q)+f(y+t_0 w_q)\}$$ is a $(k+2)$-dimensional corner supporting $f$ at $x_0$, which
contradicts the choice of $k$. Thus we must have
\begin{equation}\label{f' is 0 in directions parallel to X perp}
\frac{d}{dt}(f-\ell_1)(y+tw_q)=0 \,\,\,
\textrm{ for all } \, y\in \R^n, t\in\R \, \textrm{ with } \, y+tw_q\in \R^n,  \, q=1, ..., n-k.
\end{equation}\label{f is constant in directions parallel to X perp}
This implies that
\begin{equation}
(f-\ell_1)(y+\sum_{j=1}^{n-k}t_{j}w_j)=(f-\ell_1)(y)
\end{equation}
if $y\in \R^n$ and $t_1,\ldots, t_{n-k} \in \R$.
Let $P$ be the orthogonal projection of $\R^n$ onto the subspace
$X:=\textrm{span}\{w_1, \ldots, w_{n-k}\}^{\bot}$. For each $z\in X $ we may define
$$
\widetilde{c}(z)=(f-\ell_1)(z+\sum_{j=1}^{n-k}t_j w_j)
$$
if $z+\sum_{j=1}^{n-k}t_j w_j \in \R^n$ for some $t_1, \ldots, t_{n-k}$. It is clear that $\widetilde{c}:X\to\R$ is well defined and
convex, and satisfies
$$
f-\ell_1=\widetilde{c}\circ P.
$$
Now let us write $$\ell_1(x)=\langle u, x \rangle + \langle v, x\rangle,$$ where $u\in X$ and $v\in X^{\perp}$. We then have
$$
f(x)=c(P(x))+\langle v,x\rangle,
$$
where $c:X\to \R$ is defined by $$c(x)=\widetilde{c}(x)+\langle u, x\rangle.$$

Moreover, since $\bigcap_{j=2}^{k+1}\textrm{Ker}\,
(\ell_{j}-\ell_1) =X^{\perp}$, it is clear that the restriction of the corner function $C=\max_{1\leq j\leq k+1}\{\ell_j +b_j\}$ to $X$ is a $(k+1)$-dimensional corner function on $X$, which has dimension $k$, and it is obvious that $(k+1)$-dimensional corner functions on $k$-dimensional spaces are essentially coercive; therefore, because $c(x)\geq C(x)$ for all $x\in X$, we deduce that $c$ is essentially coercive.

Now let us see that $X$ is the only linear subspace of $\R^n$ for which $f$ admits a decomposition of the form
\begin{equation}\label{X decomposition}
f(x)=c(P_{X}(x))+\langle v, x\rangle,
\end{equation}
with $c$ essentially coercive and $v\in X^{\perp}$. Assume that we have two subspaces $Z_1, Z_2$ for which \eqref{X decomposition} holds, say
\begin{equation}\label{Z1 decomposition}
f(x)=\varphi_1(P_{Z_1}(x))+\langle \xi_1, x\rangle,
\end{equation}
and
\begin{equation}\label{Z2 decomposition}
f(x)=\varphi_2(P_{Z_2}(x))+\langle \xi_2, x\rangle,
\end{equation}
with $\varphi_j$ essentially coercive and $\xi_j\in X_{j}^{\perp}$.
In order to show that $Z_1=Z_2$ it is enough to check that $Z_1^{\perp}=Z_{2}^{\perp}$. Suppose this equality does not hold; then, either $\in Z_{1}^{\perp}\setminus Z_{2}^{\perp}\neq\emptyset$ or $\in Z_{2}^{\perp}\setminus Z_{1}^{\perp}\neq\emptyset$. Assume for instance that there exists $\xi_0\in Z_{1}^{\perp}\setminus Z_{2}^{\perp}$. Then, on the one hand \eqref{Z1 decomposition} implies that the function $t\mapsto f(t\xi_0)=\varphi_1(0)+t\langle \xi_1, \xi_0\rangle$ is linear, and on the other hand \eqref{Z2 decomposition} implies that the same function $t\mapsto f(t\xi_0)=\varphi_2(P_{Z_2}(t\xi_0))+t\langle \xi_2, \xi_0\rangle$ is essentially coercive (indeed, we have $\lim_{|t|\to\infty}|P_{Z_2}(t\xi_0)|=\infty$ because $\xi_0\notin Z_2^{\perp}$). This is absurd, so we must have $Z_1^{\perp}\subset Z_{2}^{\perp}$. By a similar argument, just changing the roles of $Z_1$ and $Z_2$, we also obtain that $Z_2^{\perp}\subset Z_{1}^{\perp}$. Therefore $Z_1^{\perp}=Z_{2}^{\perp}$, as
  we
  wanted to check.
\footnote{It is worth noting that the preceding argument also shows that the dimension of $X_f$ is $k$, the largest integer such that $f$ is supported at some point by a $(k+1)$-dimensional corner function.
In particular, it follows that a function is essentially coercive in $\R^n$ if and only if it is supported by an $(n+1)$-dimensional corner function.}

Next, let us see that $\xi_1=\xi_2$. For every $v\in Z_{1}^{\perp}$ we have
$$\varphi_1(0)+\langle \xi_1, v\rangle=f(v)=\varphi_2(0)+\langle\xi_2, v\rangle.
$$
Since the equality of two affine function imply the equality of their linear parts, we have that
$$
\langle \xi_1, v\rangle=\langle\xi_2, v\rangle
$$
for all $v\in Z_{1}^{\perp}$, and because $\xi_1, \xi_2\in Z_1^{\perp}$ this shows that $\xi_1=\xi_2$.

Once we know that $X_1=X_2$ and $\xi_1=\xi_2$, it immediately follows from \eqref{Z1 decomposition} and \eqref{Z2 decomposition} that $\varphi_1=\varphi_2$.
This shows that the decomposition is unique.

Finally let us prove that if $f$ is essentially coercive in the direction of  a subspace $Y$ (say that there exists a linear form $\ell$ on $\R^n$ such that $|f(x)-\ell(x)|\to\infty$ as $|P_{Y}(x)|\to\infty$), then $Y\subseteq X_f$. Indeed, otherwise there would exist a vector $\xi\in X^{\perp}\setminus Y^{\perp}$, and the function
$$
\R\ni t\mapsto f(t\xi)=c(P_{X}(t\xi))+t\langle v, \xi\rangle= c(0)+t\langle v, \xi\rangle
$$ 
would be affine, hence so would be the function
$$
\R\ni t\mapsto f(t\xi)-\ell(t\xi).
$$
But this function cannot be affine, because $\xi\notin Y^{\perp}$ implies that $|P_{Y}(t\xi)|\to\infty$ as $|t|\to\infty$, and we have $|f(x)-\ell(x)|\to\infty$ as $|P_{Y}(x)|\to\infty$. This completes the proof of Theorem \ref{decomposition theorem} in the case that $f$ is everywhere differentiable.

{\bf Case 2.} In the case that $f:\R^n\to\R$ is convex but not everywhere differentiable, we can use \cite[Theorem 1.1]{Azagra} in order to find a $C^1$ (or even real analytic) convex function $g:\R^n\to\R$ such that $f-1\leq g\leq f$. Then we may apply Case 1 in order to find a unique subspace $X\subseteq \R^n$, an essentially coercive convex function $C:X\to\R$ and a vector $v\in X^{\perp}$ such that
$$
g(z)=c(P(z))+\langle v, z\rangle
$$
for all $z\in\R^n$. Now take $x\in X$ and $\xi\in X^{\perp}$. The function $\R\ni t\mapsto g(t\xi)$, is affine, and because $f\leq g+1$ and $f$ is convex, so must be the function $\R\ni t\mapsto f(t\xi)$, and with the same linear part (this immediately follows form the fact that the only convex functions which are bounded above on $\R$ are constants). This shows that
$$
f(x+t\xi)=f(x)+t\langle v, \xi \rangle
$$ 
for every $x\in X$, $\xi\in X^{\perp}$, $t\in\R$. Equivalently, we can write
$$
f(z)=\varphi(P(z))+\langle v, z\rangle \, \textrm{ for all } z\in\R^n,
$$
where $\varphi:X\to \R$ is defined by $\varphi(x)=f(x)$ for all $x\in X$. Moreover, $\varphi$ is essentially coercive because so is $g_{|_X}$ and we have $|f-g|\leq 1$.
This shows the existence of the decomposition in the statement. The uniqueness of the decomposition, as well as the last part of the statement of Theorem \ref{decomposition theorem}, follows by the same arguments as in Case 1, because that part of the proof does not use the differentiability of $f$.
The proof of Theorem \ref{decomposition theorem} is thus complete. \qed

\section{Necessity of Theorem \ref{MainTheorem1WithPThenP}} 

Let $F$ be a convex function of class $C^1(\R^n)$ such that $(F,\nabla F)$ extends $(f,G)$ from $E$, and $X_F=X.$ 

\subsection{Condition $(i)$} The inequality $f(x)-f(y)-\langle G(y), x-y \rangle \geq 0$ for all $x,y\in E$ follows from the fact that $F$ is convex and differentiable with $(F, \nabla F)=(f,G)$ on $E.$

\subsection{Condition $(ii)$} Assume that $\left( |\nabla F (x_k) | \right)_k$ tends to $+\infty$ for a sequence $(x_k)_k  \subset \Rn$ but
$$
\frac{\langle \nabla F (x_k), x_k \rangle - F(x_k)}{|\nabla F(x_k)|}
$$
does not go to $+\infty.$ Then, passing to a subsequence, we may assume that there exists $M>0$ such that $\langle \nabla F (x_k), x_k \rangle - F(x_k) \leq M | \nabla F (x_k) |$ for all $k.$ We denote $z_k = 2 M \frac{\nabla F (x_k)}{|\nabla F (x_k)|}.$ By convexity, we have, for all $k,$ that
$$
0 \leq F(z_k)-F(x_k)-\langle \nabla F(x_k), z_k-x_k \rangle \leq F(z_k)-M | \nabla F (x_k) |,
$$
which contradicts the assumption that $|\nabla F (x_k) | \to \infty.$

\subsection{Condition $(iii)$}
Making use of Theorem \ref{decomposition theorem} and bearing in mind that $X_F=X,$ we can write $F =c \circ P_X + \langle v, \cdot \rangle,$ where $P_X: \Rn \to X$ is the orthogonal projection onto the subspace $X,$ the function $c: X \to \R$ is convex and essentially coercive on $X$, and $v \perp X.$ It is easy to see that $c$ is differentiable on $X$ and that $\nabla F(x)= \nabla c (P_X(x)) + v$ for all $x\in \Rn.$ Since $F=G$ on $E,$ we easily get $G(x)-G(y) \in X$ for all $x,y\in E.$ 

\subsection{Condition $(v)$}
Let us consider sequences $(x_k)_k, (z_k)_k$ on $E$ such that $(P_X(x_k))_k$ and $(\nabla F(z_k))_k$ are bounded and
\begin{equation}\label{limitnecessity}
\lim_{k\to\infty} \left( F(x_k)-F(z_k)- \langle \nabla F(z_k), x_k-z_k \rangle \right)=0.
\end{equation}

Suppose that $ | \nabla F(x_k)-\nabla F(z_k) |$ does not converge to $0.$ Then, using that $(P_X(x_k))_k$ is bounded, there exist some $x_0 \in X$ and $\varepsilon>0$ for which, possibly after passing to a subsequence, $P_X(x_k)$ converges to $x_0$ and $ | \nabla F(x_k)-\nabla F(z_k) | \geq \varepsilon$ for every $k$. By using the decomposition $F= c \circ P_X + \langle v, \cdot \rangle$ and some elementary properties of orthogonal projections with \eqref{limitnecessity} we obtain
$$
\lim_{k\to\infty} \left( c ( P_X(x_k))-c (P_X( z_k))- \langle \nabla c (P_X(z_k)) , P_X(x_k)-P_X(z_k) \rangle \right) = 0. 
$$
Since $ \nabla F (y)-v = \nabla c (P_X( y))$ for all $y\in \Rn$ we have that $ (\nabla c (P_X(z_k)))_k$ is bounded and $$| \nabla c (P_X(x_k))-\nabla c (P_X(z_k)) | \geq \varepsilon$$ for every $k.$ Besides
$$
\lim_{k\to\infty} \left( c ( x_0)-c (P_X( z_k))- \langle \nabla c (P_X(z_k)) , x_0-P_X(z_k) \rangle  \right) =0.
$$
The contradiction follows from the following Lemma.

\begin{lemma}
Let $h: X \to \R$ be a differentiable convex function, $x_0\in X$, and $(y_k)_k$ be a sequence in $X$ such that $( \nabla h (y_k) )_k$ is bounded and
$$
 \lim_{k\to\infty}  \left( h(x_0)-h(y_k)-\langle \nabla h(y_k), x_0-y_k \rangle \right)= 0.
$$
Then $\lim_{k\to\infty} | \nabla h(x_0)- \nabla h(y_k) | = 0.$
\end{lemma}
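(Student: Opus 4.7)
I would argue by contradiction. Suppose the conclusion fails; then after passing to a subsequence I may assume there exists $\varepsilon>0$ with $|\nabla h(x_0)-\nabla h(y_k)|\geq \varepsilon$ for every $k$. Since $(\nabla h(y_k))_k$ is bounded, Bolzano--Weierstrass allows me to pass to a further subsequence along which $\nabla h(y_k)\to \xi$ for some $\xi\in X$ with $|\xi-\nabla h(x_0)|\geq \varepsilon$. The goal is to contradict this last inequality by showing $\xi=\nabla h(x_0)$.

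The key device is the family of affine supports
\[
\ell_k(x):=h(y_k)+\langle \nabla h(y_k),\, x-y_k\rangle,\qquad x\in X.
\]
Convexity and differentiability of $h$ give $\ell_k\leq h$ pointwise, with equality at $y_k$. The hypothesis on the Bregman-type differences is precisely $\ell_k(x_0)\to h(x_0)$. The useful reformulation is
\[
\ell_k(x)=\ell_k(x_0)+\langle \nabla h(y_k),\, x-x_0\rangle,
\]
which expresses $\ell_k$ intrinsically in terms of quantities that already converge, independently of whether the $y_k$ themselves stay bounded or drift to infinity.

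From this formula I get the pointwise limit
\[
\ell_k(x)\;\longrightarrow\;\ell_\infty(x):=h(x_0)+\langle \xi,\, x-x_0\rangle
\]
for every $x\in X$. Passing to the limit in $\ell_k\leq h$ yields $\ell_\infty\leq h$ on $X$, and by construction $\ell_\infty(x_0)=h(x_0)$. Thus $\ell_\infty$ is an affine minorant of $h$ that touches $h$ at $x_0$, i.e.\ a supporting hyperplane to the graph of the convex function $h$ at the point $x_0$. Since $h$ is differentiable at $x_0$, the only such supporting affine function is the tangent $x\mapsto h(x_0)+\langle \nabla h(x_0),x-x_0\rangle$, so comparison of linear parts forces $\xi=\nabla h(x_0)$, contradicting $|\xi-\nabla h(x_0)|\geq \varepsilon$.

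I do not expect any serious obstacle: once the affine supports $\ell_k$ are written in the $x_0$-centred form, everything reduces to the standard fact that a pointwise limit of affine minorants that still touches the convex function at a point of differentiability must coincide with the tangent hyperplane there. The only thing to be slightly careful about is the case where $(y_k)_k$ might be unbounded (so that the values $h(y_k)$ and the slopes at $y_k$ are not directly controlled), and the reparametrisation above is precisely what sidesteps that issue.
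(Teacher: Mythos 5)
Your proof is correct, but it proceeds along a genuinely different route than the paper's. The paper argues directly: writing $\alpha_k$ for the Bregman difference and $v_k$ for the unit vector in the direction of $\nabla h(y_k)-\nabla h(x_0)$, it compares the supporting hyperplane at $y_k$ with the value of $h$ at the perturbed point $x_0+\sqrt{\alpha_k}\,v_k$ to deduce
$\langle \nabla h(x_0+\sqrt{\alpha_k}v_k)-\nabla h(x_0),\,v_k\rangle \geq -\sqrt{\alpha_k}+\varepsilon$,
which is impossible since $\nabla h$ is continuous and $\alpha_k\to 0$. You instead recast the supporting hyperplanes at $y_k$ in $x_0$-centred form, extract a convergent subsequence of slopes $\nabla h(y_k)\to\xi$ using the boundedness hypothesis, pass to the limit of affine minorants, and invoke the uniqueness of the subgradient at a point of differentiability to force $\xi=\nabla h(x_0)$. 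Both arguments are valid; the trade-offs are as follows. Your argument is more conceptual, reducing the matter to the standard fact that $\partial h(x_0)=\{\nabla h(x_0)\}$ when $h$ is differentiable at $x_0$ (which is essentially Lemma 4.8 of the paper, applied with the affine $\ell_\infty$ below and $h$ above), and avoids the degenerate case $\alpha_k=0$ that the paper must dispose of separately via a cited lemma. On the other hand, your argument relies essentially on the boundedness of $(\nabla h(y_k))_k$ to get compactness of the slopes, whereas the paper's direct computation never actually uses this hypothesis --- so the paper's proof in fact establishes a marginally stronger statement without the boundedness assumption, although that assumption is stated in the lemma and thus available to you.
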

\begin{proof}
Suppose not. Then, up to extracting a subsequence, we would have $| \nabla h(x_0)- \nabla h(y_k) |\geq \varepsilon,$ for some positive $\varepsilon$ and for every $k.$ Now, for every $k,$ we set
$$
\alpha_k:=  h(x_0)-h(y_k)-\langle \nabla h(y_k), x_0-y_k \rangle, \quad v_k:= \frac{\nabla h(y_k)- \nabla h(x_0) }{|\nabla h(y_k)- \nabla h(x_0) |}.
$$
In \cite[Lemma 2.1]{AzagraMudarra2017PLMS} it is proved that $\alpha_k=0$ implies $| \nabla h(x_0)- \nabla h(y_k) |=0,$ which is absurd. Thus we must have $\alpha_k >0$ for every $k.$ By convexity we have
\begin{align*}
 \sqrt{\alpha_k} \langle \nabla h (x_0 & + \sqrt{\alpha_k} v_k),  v_k \rangle \geq  h(x_0+\sqrt{\alpha_k} v_k ) - h(x_0) \\
 &  \geq h(y_k)+ \langle \nabla h(y_k), x_0+\sqrt{\alpha_k} v_k -y_k \rangle - h(x_0) \\
& = -\alpha_k + \sqrt{\alpha_k} \langle \nabla h(y_k),  v_k \rangle 
\end{align*}
for all $k$.
Hence, we obtain
$$
\langle \nabla h (x_0 + \sqrt{\alpha_k} v_k)- \nabla h(x_0),  v_k \rangle \geq - \sqrt{\alpha_k}+ |\nabla h(y_k)- \nabla h(x_0) | \geq - \sqrt{\alpha_k}+ \varepsilon.
$$
But the above inequality is impossible, as $\nabla h$ is continuous and $\alpha_k \to 0$. 
\end{proof}

\subsection{Condition $(iv)$}

By applying Theorem \ref{decomposition theorem} we may write
$$
F(x)=c(P_{X}(x))+\langle v, x\rangle,
$$
with $c:X\to\R$ convex and essentially coercive, and  $v\perp X$. This implies that 
$$
X=\textrm{span}\{\nabla c(x)-\nabla c(y) : x, y\in X\},
$$
and because $\nabla F=\nabla (c\circ P_X) +v$, also that
$$
X=\textrm{span}\{\nabla F(x)-\nabla F(y) : x, y\in \R^n\}.
$$
Let us denote $Y:=\textrm{span}\{\nabla F(x)-\nabla F(y) : x, y\in E\}\subset X$ and assume that $Y\neq X.$ Let $k$ and $d$ denote the dimensions of $Y$ and $X$ respectively. We can find points $x_0,x_1, \ldots, x_k \in E$ such that $Y =\textrm{span} \lbrace \nabla F(x_j)-\nabla F(x_0) \: : \: j=1, \ldots, k \rbrace.$ We claim that there exists $p_1 \in \R^n$ such that $\nabla F(p_1)-\nabla F(x_0) \notin Y.$ Indeed, otherwise we would have that $\nabla F(p)-\nabla F(x_0) \in Y$ for all $p\in \R^n,$ which implies that
$$
\nabla F(p)-\nabla F(q)=(\nabla F(p)-\nabla F(x_0)) - (\nabla F(q)-\nabla F(x_0)) \in Y, \quad \text{for all} \quad p,q \in \R^n.
$$
This is a contradiction since $X \neq Y.$ Then the subspace $Y_1$ spanned by $Y$ and the vector $\nabla F(p_1)-\nabla F(x_0) $ has dimension $k+1.$ If $d=k+1,$ we are done. If $d>k+1,$ using the same argument as above, we can find a point $p_2 \in \R^n$ such that $\nabla F(p_2)-\nabla F(x_0) \notin Y_1.$ By induction, we obtain points $p_1, \ldots ,p_{d-k} \in \R^n$ such that the set $ \lbrace \nabla F(p_j)- \nabla F(x_0) \rbrace_{j=1}^{d-k}$ is linearly independent and $X = Y \oplus \textrm{span}\lbrace \nabla F(p_j)- \nabla F(x_0) \: : \: j=1, \ldots ,d-k \rbrace,$ which shows that
$$
X=\textrm{span}\left\{u-w : u, w\in \nabla F(E)\cup\{\nabla F(p_1), \ldots, \nabla F(p_{d-k})\}\, \right\}.
$$
This shows the necessity of $(iv)(a)$. Obviously we have $\nabla F(p_j)-\nabla F(x_0)\in X\setminus Y$ for all $j=1, \ldots, d-k$, and we claim that
$$
p_{j}\in\R^n\setminus \overline{E} \quad \textrm{for all} \quad j= 1, \ldots, d-k.
$$
Indeed, if there exists a sequence $(q_\ell)_\ell \subset E$ with $(q_\ell)_\ell \to p_j$ for some $j=1,\ldots,d-k,$ then, because $Y$ is closed and $ \nabla F$ is continuous, $\nabla F(p_j)-\nabla F(x_0) = \lim_\ell \left( \nabla F(q_\ell)-\nabla F(x_0) \right) \in Y,$ which is a contradiction. By the (already shown) necessity of condition $(v)$, applied with $E^{*}=E\cup\{p_1, \ldots, p_{d-k}\}$ in place of $E$, we have that
\begin{equation}\label{differences of derivatives must go to 0}
\lim_{\ell\to\infty} | \nabla F(x_\ell)- \nabla F(z_\ell) | = 0
\end{equation}
whenever $(x_\ell)_\ell, (z_\ell)_\ell$ are sequences in $E^{*}$ such that $(P_{X} (x_\ell))_\ell$ and $(\nabla F(z_\ell))_\ell$ are bounded and
$$
\lim_{\ell\to\infty} \left( F(x_\ell)-F(z_\ell)- \langle \nabla F(z_\ell), x_\ell-z_\ell \rangle \right)= 0.
$$
But the fact that $\textrm{dist}(\nabla F(p_j)-\nabla F(x_0), Y)>0$ for each $j=1, \ldots, d-k$ prevents
the limiting condition \eqref{differences of derivatives must go to 0} from holding true with $(z_\ell)_\ell \subset\{p_1, \ldots, p_{d-k}\}$ and $(x_\ell)_\ell\subset E$. This implies that the inequalities
\begin{eqnarray*}
& & F(p_j)\geq F(p_i)+\langle\nabla F(p_i), p_j-p_i\rangle, \quad 1 \leq i,j \leq d-k, \: i \neq j,\\
& & F(p_j)\geq\sup_{z\in E, |\nabla F(z)|\leq N}\{F(z)+\langle \nabla F(z), p_j-z\rangle\},\quad  1 \leq j \leq d-k, \: N \in \N,   \: \textrm{ and }\\
& &  F(x) \geq F(p_j)+\langle \nabla F(p_j), x-p_j\rangle, \quad 1 \leq j \leq d-k, \: x\in \R^n,
\end{eqnarray*}
which generally hold by convexity of $F$, must all be strict. Moreover, the last of these inequalities, together with \eqref{differences of derivatives must go to 0}, also implies that
$$
\inf_{x\in E, \: |P_X(x)| \leq N}\lbrace F(x) - \max_{1 \leq j \leq d-k} \lbrace F(p_j) + \langle \nabla F(p_j), x-p_j\rangle  \rbrace  \rbrace >0
$$ 
for all $N \in \N$. Setting $w_j=\nabla F(p_j)$ and $\beta_j=F(p_j)$, $j=1, \ldots, d-k$, this shows the necessity of $(iv) (b)-(d)$.

\section{Sufficiency of Theorem \ref{MainTheorem1WithPThenP}}

First of all, with the notation of condition $(iv)$, if $Y\neq X$, we define $$E^{*}=E\cup\{p_1, \ldots , p_{d-k}\}$$ and extend the functions $f$ and $G$ to $E^{*}$ by setting
\begin{equation}\label{definitionnewdata}
f(p_j):=\beta_j, \quad G(p_j):= w_j \quad \text{for} \quad j=1, \ldots, d-k.
\end{equation}
If $Y=X$, we just set $E^{*}=E$ and ignore any reference to the points $p_j$ and their companions $w_j$ and $\beta_j$ in what follows.

\begin{lemma}\label{proofcompatiblenewdata}
We have:
\item[(a)]$X= \textrm{span} \left( \lbrace G(x)-G(y) \: : \: x,y \in E^* \rbrace \right).$  
\item[(b)] There exists $r>0$ such that $f(p_i)-f(p_j)-\langle G(p_j), p_i-p_j \rangle \geq r$ for all $1 \leq i \neq j \leq d-k.$
\item[(c)] For every $N \in \N,$ there exists $r_N>0$ with $f(p_i)-f(z)-\langle G(z), p_i-z \rangle \geq r_N$ for all $z\in E$ with $|G(z)| \leq N$ and all $1 \leq i \leq d-k.$ 
\item[(d)] For every $N \in \N,$ there exists $r_N>0$ with $f(x)-f(p_i)-\langle G(p_i), x-p_i \rangle \geq r_N$ for all $x\in E$ with $|P_X(x)| \leq N$ and all $1 \leq i \leq d-k.$ 
\end{lemma}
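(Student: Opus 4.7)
The plan is to observe that each of the four assertions is a near-transparent restatement of the corresponding clause of condition $(iv)$ in Theorem \ref{MainTheorem1WithPThenP}, once one unpacks the definitions $f(p_j):=\beta_j$ and $G(p_j):=w_j$. The case $Y=X$ is vacuous — (a) then reduces to the hypothesis $Y=X$ and (b)-(d) are empty — so I will focus on $Y\subsetneq X$ with the associated points $p_1,\ldots,p_{d-k}$.

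Parts (a), (b), and (d) will be essentially immediate. For (a), the extension rule gives $G(E^*)=G(E)\cup\{w_1,\ldots,w_{d-k}\}$, and consequently the set of differences $\{G(x)-G(y):x,y\in E^*\}$ coincides with $\{u-v:u,v\in G(E)\cup\{w_1,\ldots,w_{d-k}\}\}$; condition $(iv)(a)$ then finishes the job. For (b), one applies $(iv)(b)$ with the index $i$ in the role of the outer variable (and the inner index set to $j$) to obtain $\beta_i - \beta_j - \langle w_j, p_i-p_j\rangle > 0$ for each $i\neq j$, and since the index set is finite the minimum of these quantities is the required $r>0$. For (d), the infimum in $(iv)(d)$ is itself a uniform positive number $r_N$, and replacing $\max_j\{\beta_j+\langle w_j,x-p_j\rangle\}$ by the single term $\beta_i+\langle w_i,x-p_i\rangle$ only decreases the quantity, so the lower bound $r_N$ is preserved.

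The only clause that requires a short additional argument is (c), and that is where I expect the main (small) obstacle to lie. Condition $(iv)(c)$ is the strict inequality
$$
\beta_i\;>\;\sup_{z\in E,\ |G(z)|\leq N}\bigl\{f(z)+\langle G(z),p_i-z\rangle\bigr\},
$$
and to convert strictness into a uniform positive gap I must first verify that this supremum is \emph{finite}. The plan is to deduce finiteness from one application of $(i)$: fix an arbitrary $z_0\in E$ and apply the convexity inequality to the pair $(z_0,z)$ to get $f(z)-\langle G(z),z\rangle\leq f(z_0)-\langle G(z),z_0\rangle$, whence
$$
f(z)+\langle G(z),p_i-z\rangle\;\leq\;f(z_0)+\langle G(z),p_i-z_0\rangle\;\leq\;f(z_0)+N|p_i-z_0|
$$
whenever $|G(z)|\leq N$. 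Once finiteness is in hand, $r_N^{(i)}:=\beta_i-\sup>0$ for each $i$, and $r_N:=\min_{1\leq i\leq d-k}r_N^{(i)}>0$ provides the required uniform positive lower bound, completing the proof.
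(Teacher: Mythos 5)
Your proposal is correct and is exactly the approach the paper takes — the paper's proof is the one-liner ``This follows immediately from condition $(iv)$ and the definitions of \eqref{definitionnewdata},'' and your argument is simply the careful unpacking of that statement (with the observation for (c) that finiteness of the supremum, needed to turn a strict inequality into a positive gap, is in fact already forced by $(iv)(c)$ itself, since $\beta_j>+\infty$ is impossible; your direct bound via condition $(i)$ is a fine alternative way to see it).
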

\begin{proof}
This follows immediately from condition $(iv)$ and the definitions of \eqref{definitionnewdata}.
\end{proof}

\begin{lemma}\label{conditioncw1forE*}
The jet $(f,G)$ defined on $E^{*}$ satisfies the inequalities of the assumption $(i)$ on $E^*$. Moreover,  if $(x_k)_k, (z_k)_k$ are sequences in $E^*$ such that $(P_X( x_k))_k$ and $(G(z_k))_k$ are bounded, then
$$
\lim_{k\to\infty} \left( f(x_k)-f(z_k)- \langle G(z_k), x_k-z_k \rangle \right)=0 \: \implies \lim_{k\to\infty} | G(x_k)-G(z_k) | = 0.
$$
\end{lemma}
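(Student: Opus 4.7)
The plan is to verify the convexity inequalities and the $(CW^1)$-type condition on $E^*$ separately, in each case reducing to condition $(i)$, condition $(v)$, or one of the strict positivity estimates collected in Lemma~\ref{proofcompatiblenewdata}.

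For the convexity inequalities on $E^*$ I would argue case by case. If $x,y\in E$ this is exactly assumption $(i)$. If $x=p_i$ and $y=p_j$ with $i\neq j$, Lemma~\ref{proofcompatiblenewdata}(b) gives $f(p_i)-f(p_j)-\langle G(p_j),p_i-p_j\rangle\geq r>0$ (and the case $i=j$ is trivial). If $x=p_j$ and $y\in E$, then $|G(y)|\leq N$ for some $N\in\N$, and condition $(iv)(c)$ of Theorem~\ref{MainTheorem1WithPThenP} yields $f(p_j)=\beta_j>f(y)+\langle G(y),p_j-y\rangle$, hence the required inequality. Finally, if $x\in E$ and $y=p_j$, then for any $N\in\N$ with $|P_X(x)|\leq N$, Lemma~\ref{proofcompatiblenewdata}(d) gives $f(x)-f(p_j)-\langle G(p_j),x-p_j\rangle\geq r_N>0$, and since every $x\in E$ satisfies $|P_X(x)|\leq N$ for some $N$, this completes the first assertion.

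For the limiting condition, suppose $(x_k)_k,(z_k)_k\subset E^*$ satisfy the hypotheses, and write $Q:=\{p_1,\dots,p_{d-k}\}$, a finite set. After passing to a subsequence I may assume that each of the sequences $(x_k)_k,(z_k)_k$ either lies entirely in $E$ or is constantly equal to some point of $Q$. I then distinguish four cases. \textbf{Case 1:} $(x_k),(z_k)\subset E$. Here the conclusion is exactly condition $(v)$ of Theorem~\ref{MainTheorem1WithPThenP}. \textbf{Case 2:} $x_k\equiv p_i\in Q$ and $(z_k)\subset E$. Since $(G(z_k))$ is bounded by some $N\in\N$, Lemma~\ref{proofcompatiblenewdata}(c) yields $f(p_i)-f(z_k)-\langle G(z_k),p_i-z_k\rangle\geq r_N>0$ for all $k$, contradicting the vanishing of this limit; thus this case does not occur. \textbf{Case 3:} $(x_k)\subset E$ and $z_k\equiv p_j\in Q$. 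Since $(P_X(x_k))$ is bounded by some $N\in\N$, Lemma~\ref{proofcompatiblenewdata}(d) gives $f(x_k)-f(p_j)-\langle G(p_j),x_k-p_j\rangle\geq r_N>0$, again contradicting the hypothesis. \textbf{Case 4:} $x_k\equiv p_i$, $z_k\equiv p_j$. If $i=j$ the conclusion is trivial; if $i\neq j$ Lemma~\ref{proofcompatiblenewdata}(b) gives a uniform positive lower bound, so this subcase cannot occur either.

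The main obstacle is essentially cosmetic rather than mathematical: organizing the case split in a way that ensures the strict positivity estimates from Lemma~\ref{proofcompatiblenewdata} (which are uniform only for bounded $|G(z)|$ in part (c) or bounded $|P_X(x)|$ in part (d)) match the two boundedness hypotheses on $(x_k),(z_k)$. The finiteness of $E^*\setminus E$ is what makes the reduction by passing to a subsequence clean, because it turns the cross-cases into contradictions with a single uniform constant $r$ or $r_N$.
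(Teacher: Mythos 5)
Your proof is correct and follows essentially the same route as the paper: reduce everything to the strict positivity estimates of Lemma~\ref{proofcompatiblenewdata} and condition $(v)$, using finiteness of $E^*\setminus E$ to pigeonhole. One small thing to tighten: after "passing to a subsequence" you establish the conclusion only along that subsequence, so you should close the loop by invoking the standard criterion (every subsequence of $(|G(x_k)-G(z_k)|)_k$ has a further subsequence tending to $0$, hence the whole sequence does) or, equivalently, argue by contradiction from a subsequence on which $|G(x_k)-G(z_k)|\geq\varepsilon$.
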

\begin{proof}
Suppose that $(x_k)_k, (z_k)_k$ are sequences in $E^*$ such that $(P_X( x_k))_k$ and $(G(z_k))_k$ are bounded and $\lim_{k\to\infty} \left( f(x_k)-f(z_k)- \langle G(z_k), x_k-z_k \rangle \right)=0.$ In view of Lemma \ref{proofcompatiblenewdata} (b), (c) and (d), it is immediate that there exists $k_0$ such that either there is some $1\leq i \leq d-k$ with $x_k=z_k =p_i$ for all $k \geq k_0$ or else $x_k,z_k \in E$ for all $k \geq k_0.$ In the first case, the conclusion is trivial. In the second case, $\lim_{k\to\infty} |G(x_k)-G(z_k)|=0$ follows from condition $(v)$ of Theorem \ref{MainTheorem1WithPThenP}.
\end{proof}

We now consider the minimal convex extension of the jet $(f,G)$ from $E^*$, defined by
$$
m(x)=m(f,G, E^{*})(x) := \sup_{y\in E^*} \lbrace f(y)+\langle G(y),x-y \rangle \rbrace, \quad x\in \R^n.
$$

It is clear that $m,$ being the supremum of a family of affine functions, is a convex function on $\Rn.$ In fact, we have the following.

\begin{lemma}\label{finitenessminimal}
$m(x)$ is finite for every $x\in \Rn.$ In addition, $m=f$ on $E^*$ and $G(x) \in \partial m (x)$ for all $x\in E^*.$ 
\end{lemma}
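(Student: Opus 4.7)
The plan is to verify the three assertions in order, all by relatively direct arguments using condition $(ii)$ of Theorem \ref{MainTheorem1WithPThenP} together with Lemma \ref{conditioncw1forE*}. Before anything else I would note that, since $E^{*}=E\cup\{p_1,\ldots,p_{d-k}\}$ is obtained from $E$ by adding finitely many points with bounded $G$-values $w_j$, condition $(ii)$ carries over verbatim from $E$ to $E^{*}$: any sequence in $E^{*}$ with $|G(\cdot)|\to\infty$ must eventually lie inside $E$.

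For the finiteness claim, fix $x\in\R^n$ and any reference point $z_0\in E^{*}$, and split the index set. Set $M:=|x|+1$. By the extended condition $(ii)$ there is some $N>0$ such that for all $y\in E^{*}$ with $|G(y)|\geq N$ one has $\langle G(y),y\rangle -f(y)\geq M|G(y)|$, whence
\[
f(y)+\langle G(y),x-y\rangle \;\leq\; \langle G(y),x\rangle - M|G(y)| \;\leq\; (|x|-M)|G(y)| \;\leq\; 0.
\]
For the complementary range $|G(y)|<N$ I would use Lemma \ref{conditioncw1forE*}, which gives $f(y)\leq f(z_0)+\langle G(y),y-z_0\rangle$; rearranging,
\[
f(y)+\langle G(y),x-y\rangle \;=\; f(y)+\langle G(y),y-z_0\rangle +\langle G(y),x-y\rangle - \langle G(y),y-z_0\rangle
\]
simplifies to at most $f(z_0)+\langle G(y),x-z_0\rangle \leq f(z_0)+N|x-z_0|$. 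Taking the supremum over $y\in E^{*}$ therefore yields $m(x)\leq \max\{0,\,f(z_0)+N|x-z_0|\}<\infty$, proving finiteness.

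For the identity $m=f$ on $E^{*}$ and the subdifferential inclusion, I would argue as follows. Pick $x\in E^{*}$. The inequality $m(x)\geq f(x)$ is obtained by choosing $y=x$ in the defining supremum. The reverse inequality $m(x)\leq f(x)$ is exactly the content of the convexity inequalities $f(x)\geq f(y)+\langle G(y),x-y\rangle$ for every $y\in E^{*}$, which is the first part of Lemma \ref{conditioncw1forE*}. Finally, for any $z\in\R^n$ the definition of $m$ gives $m(z)\geq f(x)+\langle G(x),z-x\rangle = m(x)+\langle G(x),z-x\rangle$, which is precisely the statement that $G(x)$ is a subgradient of $m$ at $x$.

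No serious obstacle is anticipated: the only delicate point is the finiteness, and it only requires the routine separation of $E^{*}$ into the region where $|G|$ is large (where condition $(ii)$ makes the affine contribution go to $-\infty$) and the region where $|G|$ is bounded (where condition $(i)$ supplies a uniform linear bound via a fixed reference point).
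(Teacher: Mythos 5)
Your proof is correct, and it rests on exactly the same two ingredients the paper uses: the coercivity condition $(ii)$, transferred from $E$ to $E^*$, controls the terms with large $|G(y)|$, while the convexity inequality of Lemma~\ref{conditioncw1forE*} together with a fixed reference point $z_0$ controls the terms with bounded $|G(y)|$. The organization is slightly different: the paper extracts a near-maximizing sequence $(y_k)_k$ for the supremum defining $m(x)$, observes that boundedness of $(G(y_k))_k$ immediately gives $m(x)<\infty$, and then rules out the unbounded case by a contradiction with condition $(ii)$. You instead first \emph{quantify} condition $(ii)$ -- for $M=|x|+1$ there is a threshold $N$ with $\langle G(y),y\rangle-f(y)\geq M|G(y)|$ whenever $|G(y)|\geq N$ (a standard negation-of-limit argument which you should, strictly speaking, spell out in one line) -- and then bound the supremum directly by splitting $E^*$ into $\{|G|\geq N\}$, where the affine contribution is $\leq 0$, and $\{|G|<N\}$, where it is $\leq f(z_0)+N|x-z_0|$. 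This yields an explicit upper bound $m(x)\leq\max\{0,\,f(z_0)+N|x-z_0|\}$ and avoids any appeal to maximizing sequences, which is arguably cleaner; but it is a reorganization of the same argument rather than a genuinely different method. The second and third assertions ($m=f$ on $E^*$ and $G(x)\in\partial m(x)$) you handle exactly as the paper does, via condition $(i)$ on $E^*$ and the definition of $m$ as a supremum.
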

Here $\partial m(x):=\{\xi\in\R^n : m(y)\geq m(x)+\langle \xi, y-x\rangle \textrm{ for all } y\in\R^n\}$ is the subdifferential of $f$ at $x$.
\begin{proof}
Fix a point $z_0\in E^*$. For any given point $x\in \Rn$ it is clear that there exists a sequence $(y_k)_k$ (possibly stationary) in $E^*$ such that 
$$
f(z_0)+ \langle G(z_0), x-z_0 \rangle \leq f(y_k)+ \langle G(y_k), x-y_k \rangle \quad \text{for all} \quad k, \quad  $$
and $f(y_k)+ \langle G(y_k), x-y_k \rangle \to m(x)$ as $ k \to \infty.$ On the other hand, by the first statement of Lemma \ref{conditioncw1forE*}, we have
\begin{align*}
f(y_k)+ \langle G(y_k), x-y_k \rangle \leq f(z_0)+ \langle G(y_k), x-z_0 \rangle.
\end{align*}
Then it is clear that $m(x)<+\infty$ when $ ( G(y_k) )_k$ is a bounded sequence. We next show that this sequence can never be unbounded. Indeed, in such case, by the condition $(ii)$ in Theorem \ref{MainTheorem1WithPThenP} (which obviously holds with $E^*$ in place of $E$), we would have a subsequence for which $\lim_{k\to\infty} |G(y_k)|= + \infty$ which in turn implies
$$
\lim_{k\to\infty} \frac{\langle G(y_k), y_k \rangle - f(y_k)}{|G(y_k)|} = +\infty.
$$
Hence, by the assumption on $( y_k )_k$ we would have
$$
\frac{f(y_k)-\langle G(y_k), y_k \rangle}{|G(y_k)|} \geq \frac{f(z_0)+\langle G(z_0), x-z_0 \rangle}{|G(y_k)|}- \Big \langle \frac{ G(y_k)}{|G(y_k)|}, x \Big \rangle.
$$
Since $\lim_{k\to\infty} |G(y_k)|= + \infty,$ the right-hand term is bounded below, and this leads to a contradiction. Therefore $m(x)< + \infty$ for all $x\in \Rn.$ In addition, by using the definition of $m$ and the first statement of Lemma \ref{conditioncw1forE*} for the jet $(f,G),$ we easily obtain that $m=f$ on $E^*$ and that $G(x)$ belongs to $\partial m(x)$ for all $x\in E^*$. 
\end{proof}

\begin{lemma}\label{subspaceminimal}
The function $m$ is essentially coercive in the direction of $X$, and in fact, with the notation of Theorem \ref{decomposition theorem} we have that
$$
X_{m}=X.
$$
\end{lemma}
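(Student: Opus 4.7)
The plan is to verify that $m$ admits a decomposition of the form $m(x)=c(P_X(x))+\langle v,x\rangle$ with $v\in X^{\perp}$ and $c:X\to\R$ convex and essentially coercive, and then invoke the uniqueness in Theorem \ref{decomposition theorem} to conclude $X_m=X$.

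First I will show that $m$ is affine along every direction in $X^{\perp}$. Fix any reference point $y_0\in E^{*}$. By Lemma \ref{proofcompatiblenewdata}(a), $G(y)-G(y_0)\in X$ for every $y\in E^{*}$, so for every $z\in X^{\perp}$ we have $\langle G(y),z\rangle=\langle G(y_0),z\rangle$ for all $y\in E^{*}$. Consequently, for any $x\in\R^n$ and $z\in X^{\perp}$,
$$
m(x+z)=\sup_{y\in E^{*}}\bigl\{f(y)+\langle G(y),x-y\rangle\bigr\}+\langle G(y_0),z\rangle=m(x)+\langle G(y_0),z\rangle.
$$
Setting $v:=P_{X^{\perp}}(G(y_0))\in X^{\perp}$ (and noting that this is independent of $y_0$) and $c(u):=m(u)$ for $u\in X$, a direct computation using that $\langle v,P_X(x)\rangle=0$ gives $m(x)=c(P_X(x))+\langle v,x\rangle$ for every $x\in\R^n$; in particular $c$ is convex on $X$.

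Next I will produce enough affine minorants of $c$ to force essential coercivity on $X$. Write $d=\dim X$. By Lemma \ref{proofcompatiblenewdata}(a) I can select $q_0, q_1,\ldots, q_d\in E^{*}$ such that $\{G(q_i)-G(q_0)\}_{i=1}^{d}$ is a basis of $X$. The definition of $m$ yields, for every $u\in X$,
$$
c(u)=m(u)\geq C_0(u):=\max_{0\leq i\leq d}\bigl\{f(q_i)+\langle G(q_i),u-q_i\rangle\bigr\}.
$$
The differences of the slopes (restricted to $X$) of the $d+1$ affine pieces of $C_0|_X$ are precisely the forms $\langle G(q_i)-G(q_0),\cdot\rangle|_X$, $i=1,\ldots,d$, which are linearly independent in $X^{*}$ via the Riesz identification (since the vectors $G(q_i)-G(q_0)$ already lie in $X$ and form a basis there). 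Hence $C_0|_X$ is a $(d+1)$-dimensional corner function on the $d$-dimensional space $X$, and is therefore essentially coercive on $X$ by the footnote in Case 1 of the proof of Theorem \ref{decomposition theorem}. The inequality $c\geq C_0$ then shows that $c$ is essentially coercive on $X$.

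Having written $m=c\circ P_X+\langle v,\cdot\rangle$ with $c$ essentially coercive on $X$ and $v\in X^{\perp}$, the uniqueness of the decomposition in Theorem \ref{decomposition theorem} forces $X_m=X$. The step I expect to require the most care is the verification that $C_0|_X$ is genuinely a $(d+1)$-dimensional corner function on $X$, i.e., the linear independence of the restricted tilts in $X^{*}$; but as noted above, this reduces immediately to the fact that the vectors $\{G(q_i)-G(q_0)\}_{i=1}^{d}$ form a basis of $X$, which is ensured by Lemma \ref{proofcompatiblenewdata}(a).
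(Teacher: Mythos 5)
Your proof is correct, and it takes a somewhat different route from the paper's. The paper establishes $X\subseteq X_m$ and $X_m\subseteq X$ as two separate inclusions: first it builds a corner function $C\le m$ whose tilts span $X$ and invokes the ``moreover'' clause of Theorem \ref{decomposition theorem} to get $X\subseteq X_m$; then, assuming $X_m\ne X$, it picks $w\in X_m$ with $w\perp X$, shows via the formula for $m$ that $t\mapsto m(x_0+tw)$ is affine, and derives a contradiction. You instead construct the canonical decomposition $m=c\circ P_X+\langle v,\cdot\rangle$ directly: you use Lemma \ref{proofcompatiblenewdata}(a) to see that $m$ is affine along every $X^\perp$-direction with a uniform linear part $v=P_{X^\perp}(G(y_0))$, define $c=m|_X$, and then verify essential coercivity of $c$ by an intrinsic $(d+1)$-dimensional corner function $C_0|_X$ on $X$; the conclusion $X_m=X$ then drops out of the uniqueness clause of Theorem \ref{decomposition theorem}. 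The underlying ingredients (the corner function built from a basis $\{G(q_i)-G(q_0)\}$ of $X$, and the affineness of $m$ perpendicular to $X$) are the same, but your packaging is cleaner and bypasses the two-inclusion argument, using only the existence-and-uniqueness statement rather than the ``coercive direction implies subspace containment'' refinement. One small remark: your selection of $q_0,\ldots,q_d$ with $\{G(q_i)-G(q_0)\}_{i=1}^{d}$ a basis of $X$ is legitimate because $\mathrm{span}\{G(x)-G(y):x,y\in E^*\}=\mathrm{span}\{G(x)-G(q_0):x\in E^*\}$ for any fixed $q_0\in E^*$; and the degenerate case $X=\{0\}$ (where no $q_i$ with $i\ge 1$ exist) is trivially handled since then $m$ is affine.
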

\begin{proof}
By Lemma \ref{proofcompatiblenewdata} (a), we have $X= \textrm{span} \left( \lbrace G(x)-G(y) \: : \: x,y \in E^* \rbrace \right).$ Let us first see that $m$ is essentially coercive in the direction of $X$. If $X=\{0\}$ then $m$ is affine and the result is obvious. Therefore we can assume $\textrm{dim}(X)\geq 1$ and take points $x_0, x_1, \ldots, x_k\in E$ such that $\{v_1, \ldots, v_k\}$ is a basis of $X$, where
$$
v_j=G(x_j)-G(x_0), \,\,\, j= 1, \ldots, k.
$$
Then $$C(x)=\max\{ f(x_0)+\langle G(x_0), x-x_0\rangle, \, f(x_1)+\langle G(x_1), x-x_1\rangle, \ldots, \, f(x_k)+\langle G(x_k), x-x_k\rangle\}
$$
defines a $k$-dimensional corner function such that $$C(x)\leq m(x) \textrm{ for all } x\in\R^n,$$ and it is not difficult to see that $C$ is essentially coercive in the direction of $X$, hence so is $m$.

In particular, by Theorem \ref{decomposition theorem}, it follows that $X\subseteq X_m$.
 
Now, if $X_m\neq X$, we can take a vector $w\in X_m\setminus\{0\}$ such that $w\perp X$, and then we obtain, for all $t\in\R$, that
\begin{eqnarray*}
& & m(x_0+tw)-f(x_0)-\langle G(x_0), tw\rangle=\\
& & \sup_{z\in E}\{ f(z)-f(x_0)+\langle G(z)-G(x_0), tw\rangle +\langle G(z), x_0-z\}=\\
& & \sup_{z\in E}\{ f(z)-f(x_0) +\langle G(z), x_0-z\}\leq 0.
\end{eqnarray*}
By convexity, this implies that
$$
m(x_0+tw)=f(x_0)+\langle G(x_0), tw\rangle
$$
for all $t\in\R$, and in particular the function $\R\ni t\mapsto m(x_0+tw)$ cannot be essentially coercive, contradicting the assumption that $w\in X_m$. Therefore we must have $X_m=X$.
\end{proof}

Making use of Theorem \ref{decomposition theorem} in combination with Lemma \ref{subspaceminimal}, we can write 
\begin{equation}\label{formuladecompositionminimal}
m= c \circ P_X + \langle v, \cdot \rangle \quad \text{on} \quad \R^n,
\end{equation}
where $c: X \to \R$ is convex and essentially coercive on $X$ and $v \perp X.$ In addition, the subdifferential mappings of $m$ and $c$ satisfy the following.

\begin{claim}
Given $x\in \R^n$ and $\eta \in \partial m(x)$, we have $ \eta-v\in X$ and $\eta - v \in \partial c ( P_X(x)).$
\end{claim}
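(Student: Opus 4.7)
The plan is to unpack the subgradient inequality $m(y)\geq m(x)+\langle \eta, y-x\rangle$ by testing it along two complementary families of directions, one in $X^\perp$ and one in $X$, using the decomposition $m=c\circ P_X +\langle v,\cdot\rangle$.

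First, to show $\eta-v\in X$, I would test the subgradient inequality along lines parallel to $X^\perp$. For any $w\in X^\perp$ and $t\in\R$, setting $y=x+tw$ gives $P_X(y)=P_X(x)$, so $m(y)=c(P_X(x))+\langle v,x\rangle+t\langle v,w\rangle=m(x)+t\langle v,w\rangle$. Plugging into the subgradient inequality yields $t\langle v,w\rangle\geq t\langle\eta,w\rangle$ for every $t\in\R$, forcing $\langle\eta-v,w\rangle=0$ for all $w\in X^\perp$. Thus $\eta-v\in (X^\perp)^\perp=X$.

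Second, to show $\eta-v\in\partial c(P_X(x))$, I would test the subgradient inequality along directions in $X$. For any $z\in X$, pick $y=z+(x-P_X(x))$, so that $P_X(y)=z$ and $y-x=z-P_X(x)\in X$. Then $m(y)=c(z)+\langle v,y\rangle$, and the subgradient inequality for $m$ becomes
\[
c(z)+\langle v,y\rangle \;\geq\; c(P_X(x))+\langle v,x\rangle+\langle\eta,y-x\rangle,
\]
which rearranges to $c(z)\geq c(P_X(x))+\langle \eta-v, z-P_X(x)\rangle$ for every $z\in X$. Since we have already established $\eta-v\in X$, this is exactly the defining inequality for $\eta-v\in\partial c(P_X(x))$.

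I do not anticipate any genuine obstacle here: the claim is essentially an algebraic consequence of the orthogonal decomposition together with the subgradient definition. The only mild care needed is making sure we choose the test point $y$ so that $P_X(y)$ realizes an arbitrary element $z\in X$ while keeping the $X^\perp$-component fixed, which is automatic by the splitting $y=z+(x-P_X(x))$.
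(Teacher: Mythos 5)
Your proof is correct and follows essentially the same strategy as the paper: test the subgradient inequality for $m$ along $X^\perp$ directions to get $\eta-v\in X$, then along $X$-directions to get the subgradient inequality for $c$. The only cosmetic differences are that you use all $t\in\R$ to deduce equality directly rather than arguing by contradiction with $t=1$, and you shift the test point $y$ so that $y-x$ lies in $X$ rather than taking $y=z\in X$ and then using $\eta-v\in X$ to replace $x$ by $P_X(x)$ in the inner product; these are equivalent.
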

\begin{proof}
Suppose that $x \in \R^n$ and $\eta \in \partial m(x)$ but $\eta-v \notin X.$ Then we can find $w \in X^\perp$ with $\langle \eta -v , w \rangle =1.$ Using \eqref{formuladecompositionminimal} we get that
$$
\langle \eta , w \rangle \leq m(x+w)-m(x) = c(P_X(x+w))+\langle v, x+w \rangle - c(P_X(x))-\langle v,x \rangle = \langle v,w \rangle.
$$
This implies that $\langle \eta -v , w \rangle \leq 0$, a contradiction. This shows that $\eta -v \in X.$ Now, let $z\in X$ and $x\in \R^n.$ We have
\begin{align*}
c(z)-c(P_X(x))= m(z)-\langle v,z \rangle -m(x)+\langle v,x \rangle \geq  \langle \eta -v,z-x \rangle = \langle \eta -v,z-P_X(x) \rangle.
\end{align*}
Therefore, $\eta -v \in \partial c (P_X(x)).$ 
\end{proof}

By combining the previous Claim with the second part of Lemma \ref{finitenessminimal} we obtain that
\begin{equation}\label{derivativesc*inX}
G(x)-v \in \partial c (P_X(x))\subset X \quad \text{for all} \quad x\in E^*.
\end{equation}

\begin{lemma} \label{differentiabiltyclosurec^*}
The function $c$ is differentiable on $\overline{P_X(E^*)}$, and, if $y\in P_X(E^*)$, then $ \nabla c (y)= G(x)-v$, where $x\in E^*$ is such that $P_X( x) = y_.$
\end{lemma}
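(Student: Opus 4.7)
The plan is to deduce differentiability of $c$ on $\overline{P_X(E^*)}$ from differentiability of $m$ on $\overline{E^*+X^\perp}$, using the decomposition $m=c\circ P_X+\langle v,\cdot\rangle$ and the fact that $m$ is affine in the $X^\perp$ direction. First I check that the putative map $\tilde G(y):=G(x)-v$ (for $y=P_X(x)$, $x\in E^*$) is well-defined on $P_X(E^*)$: if $P_X(x_1)=P_X(x_2)$, then $x_1-x_2\in X^\perp$ while $G(x_i)-v\in X$ by \eqref{derivativesc*inX}, so $\langle G(x_i),x_1-x_2\rangle=\langle v,x_1-x_2\rangle$ and both convexity inequalities between $(x_1,G(x_1))$ and $(x_2,G(x_2))$ collapse to equalities; in particular $f(x_1)-f(x_2)-\langle G(x_2),x_1-x_2\rangle=0$, and Lemma~\ref{conditioncw1forE*} applied to the constant sequences $(x_1)$ and $(x_2)$ yields $G(x_1)=G(x_2)$.

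Next I show that $m$ is differentiable at every $x_0\in\overline{E^*+X^\perp}$. Write $x_0=\lim_k(x_k+w_k)$ with $x_k\in E^*$ and $w_k\in X^\perp$. Local boundedness of the subdifferential of $c$ on bounded subsets of $X$, together with $\tilde G(P_X(x_k))\in\partial c(P_X(x_k))$, makes $(G(x_k))$ bounded; after extracting we may assume $G(x_k)\to\eta_0$. Because $\nabla m$ is constant in the $X^\perp$ direction (which is visible from the decomposition), the affine pieces satisfy $\ell_{x_k}=\ell_{x_k+w_k}$, so letting $k\to\infty$ in $\ell_{x_k}\leq m$ gives $\eta_0\in\partial m(x_0)$. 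For uniqueness, fix a direction $d\in\R^n$ and choose $y_t\in E^*$ almost attaining $m(x_0+td)$ as $t\to 0^+$; condition $(ii)$ of Theorem~\ref{MainTheorem1WithPThenP} (which holds on $E^*$ as well) forces $(G(y_t))$ to remain bounded, and a direct computation rewriting the quantity $f(x_k)-f(y_t)-\langle G(y_t),x_k-y_t\rangle$ in terms of $c$ and $\tilde G$ (collapsing the $X^\perp$ contributions and using continuity of $c$) shows that along a suitably diagonalized pairing it tends to $0$. Lemma~\ref{conditioncw1forE*} then yields $G(y_t)\to\eta_0$, and hence $m'(x_0;d)=\langle\eta_0,d\rangle$ for every $d$, i.e.\ $m$ is differentiable at $x_0$ with gradient $\eta_0$.

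To conclude, every $y\in\overline{P_X(E^*)}$ lies in $\overline{E^*+X^\perp}$, because $y=\lim_k P_X(x_k)=\lim_k(x_k-P_{X^\perp}(x_k))$ with $x_k\in E^*$ and $-P_{X^\perp}(x_k)\in X^\perp$. The previous step then makes $m$ differentiable at $y\in X\subset\R^n$, and the decomposition, being affine in the $X^\perp$ direction, transfers this to differentiability of $c$ at $y$ with $\nabla c(y)=\nabla m(y)-v$. For $y=P_X(x)\in P_X(E^*)$, taking the constant choice $x_k\equiv x$, $w_k\equiv -P_{X^\perp}(x)$ in the previous step forces $\eta_0=G(x)$, whence $\nabla c(y)=G(x)-v$. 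The main obstacle is the directional-derivative step: careful diagonalization is required to pair the almost-maximizing sequence $(y_t)$ for $m(x_0+td)$ with the approximating sequence $(x_k+w_k)\to x_0$ in such a way that the hypotheses of Lemma~\ref{conditioncw1forE*} are satisfied simultaneously, so that the boundedness of $(P_X(x_k))$ and $(G(y_t))$ translates into the desired asymptotic collapse of $G(y_t)-G(x_k)$.
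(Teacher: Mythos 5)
Your proposal is correct, but it takes a genuinely different route from the paper's. The paper argues by contradiction: if $c$ fails to be differentiable at $y_0\in\overline{P_X(E^*)}$, there exist $h_k\searrow 0$ in $X$ and $\varepsilon>0$ with $\varepsilon\leq\bigl(c(y_0+h_k)+c(y_0-h_k)-2c(y_0)\bigr)/|h_k|$; rewriting via $m=c\circ P_X+\langle v,\cdot\rangle$, it picks near-maximizers $z_k,\widetilde{z}_k\in E^*$ for $m(y_0\pm h_k)$ (to within $|h_k|/2^k$), shows $(G(z_k))$ and $(G(\widetilde{z}_k))$ are bounded via condition $(ii)$, applies Lemma~\ref{conditioncw1forE*} twice (pairing each with a sequence $(x_k)\subset E^*$ with $P_X(x_k)\to y_0$) to get $G(z_k)-G(\widetilde{z}_k)\to 0$, and then the second-difference inequality collapses to $\varepsilon\leq|G(z_k)-G(\widetilde{z}_k)|+2^{1-k}$, a contradiction. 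You instead run a forward argument: you first secure well-definedness of $\widetilde G$ (which the paper never needs, since $\nabla c(y)=G(x)-v$ drops out automatically from \eqref{derivativesc*inX} once differentiability is known), then produce a candidate gradient $\eta_0=\lim_k G(x_k)$ as a subgradient of $m$ at $x_0\in\overline{E^*+X^\perp}$, and then show it is the unique one by computing each one-sided directional derivative $m'(x_0;d)$ via near-maximizers $y_t$ of $m(x_0+td)$, again invoking condition $(ii)$ for boundedness of $(G(y_t))$ and Lemma~\ref{conditioncw1forE*} on a diagonal subsequence. Both proofs ride on the same two engines — $(ii)$ to bound slopes of approximately supporting affine pieces and Lemma~\ref{conditioncw1forE*} to collapse their differences — but yours establishes the stronger intermediate fact that $m$ is differentiable on the whole set $\overline{E^*+X^\perp}$, at the cost of a diagonalization (pairing $t\to 0^+$ with $k\to\infty$) that the paper sidesteps by building the $|h_k|/2^k$ error directly into the choice of $z_k,\widetilde z_k$. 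Two small places to tighten: (a) "letting $k\to\infty$ in $\ell_{x_k}\leq m$" needs the accompanying observation that $\ell_{x_k}(x_0)\to m(x_0)$, which requires noting $f(x_k)-\langle G(x_k),x_k\rangle=c(P_X(x_k))-\langle G(x_k)-v,P_X(x_k)\rangle\to c(P_X(x_0))-\langle\eta_0-v,P_X(x_0)\rangle$; and (b) the diagonalization is legitimate because, after subtracting the $X^\perp$ contributions, the quantity $f(x_k)-f(y_t)-\langle G(y_t),x_k-y_t\rangle$ equals $c(P_X(x_k))-c(P_X(y_t))-\langle G(y_t)-v,P_X(x_k)-P_X(y_t)\rangle$, and the $k$-convergence of $P_X(x_k)\to P_X(x_0)$ is uniform over $t$ once $(G(y_t))$ is known to be bounded — this is worth stating explicitly.
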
 

\begin{proof}
Let us suppose that $c$ is not differentiable at some $y_0 \in \overline{P_X(E^*)}.$ Then, by the convexity of $c$ on $X,$ we may assume that there exist a sequence $(h_k)_k \subset X$ with $|h_k| \searrow 0$ and a number $\varepsilon >0$ such that 
$$
\varepsilon \leq \frac{ c(y_0+h_k)+c(y_0-h_k)-2c(y_0)}{|h_k|} \quad \text{for all} \quad k.
$$
We now consider sequences $(y_k)_k \subset P_X(E^*)$ and $ (x_k)_k \subset E^*$ with
$$
P_X (x_k) = y_k  \quad \text{and} \quad y_k \to y_0.
$$ 
In particular, the sequence $(P_X(x_k))_k$ is bounded. Since each $h_k$ belongs to $X,$ we can use \eqref{formuladecompositionminimal} to rewrite the last inequality as
\begin{equation}\label{inequalitydifferentiabilityc^*}
\varepsilon \leq \frac{ m(y_0+h_k)+m(y_0-h_k)-2m(y_0)}{|h_k|} \quad \text{for all} \quad k.
\end{equation}
 
By the definition of $m$ we can pick two sequences $(z_k)_k, ( \widetilde{z_k})_k \subset E^*$ with the following properties:
\begin{align*}
m(y_0+h_k) \geq f(z_k)+ \langle G(z_k), y_0 + h_k-z_k \rangle \geq m(y_0 + h_k)- \frac{|h_k|}{2^k}, \\
m(y_0- h_k) \geq f(\widetilde{z_k})+ \langle G(\widetilde{z_k}), y_0 - h_k-\widetilde{z_k} \rangle \geq m(y_0 - h_k)- \frac{|h_k|}{2^k}
\end{align*}
for every $k.$ We claim that $( G(z_k))_k$ must be bounded. Indeed, otherwise, possibly after passing to a subsequence and using the condition $(ii)$ of Theorem \ref{MainTheorem1WithPThenP}, we would obtain that
$$
\lim_{k\to\infty} |G(z_k)|= \lim_{k\to\infty} \frac{\langle G(z_k), z_k \rangle - f(z_k)}{|G(z_k)|} = +\infty.
$$
Due to the choice of $(z_k)_k$ we must have
\begin{align*}
m(y_0)& = \lim_{k\to\infty} (f(z_k)+ \langle G(z_k), x_0+h_k-z_k\rangle)\\
&= \lim_{k\to\infty} |G(z_k)| \left( \frac{f(z_k)-\langle G(z_k), z_k \rangle}{|G(z_k)|} + \Big \langle \frac{G(z_k)}{|G(z_k)|} , x_0+ h_k \Big \rangle \right) = - \infty,
\end{align*}
which is absurd. Similarly one can show that $(G(\tilde{z_k}))_k$ is bounded. Now we write
\begin{align*}
f(x_k) - & f(z_k) - \langle G(z_k), x_k -z_k \rangle \\
& = f(x_k)- \langle v, x_k \rangle - \left( m(y_0+k_k)-\langle v,y_0+h_k \rangle \right)\\
& \quad + m(y_0+h_k)-f(z_k)-\langle G(z_k), y_0+h_k-z_k \rangle \\
& \quad + \langle G(z_k)-v, y_0 +h_k-x_k \rangle. 
\end{align*}
By \eqref{formuladecompositionminimal}, the first term in the sum equals $c(P_X( x_k))- c ( y_0 +h_k ),$ which converges to $0$ because $P_X (x_k) \to  y_0$ and $c$ is continuous. Thanks to the choice of the sequence $(z_k)_k,$ the second term also converges to $0.$ From \eqref{derivativesc*inX}, we have $G(z_k)-v \in X$ for all $k,$ and then the third term in the sum is actually $\langle G(z_k)-v, y_0-P_X(x_k) + h_k \rangle,$ which converges to $0$, as $(G(z_k))_k$ is bounded and $P_X (x_k) \to y_0.$ We then have
$$
\lim_{k\to\infty} \left( f(x_k) - f(z_k) - \langle G(z_k), x_k -z_k \rangle \right)=0,
$$
where $(P_X (x_k))_k$ and $(G(z_k))_k$ are bounded sequences. We obtain from Lemma \ref{conditioncw1forE*} that $\lim_{k\to\infty} | G(x_k)-G(z_k)| = 0$, and similarly one can show that $\lim_{k\to\infty} | G(x_k)-G(\widetilde{z_k})| = 0.$ This obviously implies 
\begin{equation} \label{limitdifferencegradients}
\lim_{k\to\infty} | G(z_k)-G( \widetilde{z_k}) | =0.
\end{equation}
By the choice of the sequence $(z_k)_k, \: ( \widetilde{z_k} )_k$ and by inequality \eqref{inequalitydifferentiabilityc^*} we have, for every $k,$
\begin{align*}
\varepsilon & \leq  \frac{f(z_k)+ \langle G(z_k), y_0 + h_k-z_k \rangle}{|h_k|} + \frac{f(\widetilde{z_k})+ \langle G(\widetilde{z_k}), y_0 - h_k-\widetilde{z_k} \rangle}{|h_k|} \\
& \quad - \frac{f(z_k)+ \langle G(z_k), y_0-z_k \rangle + f(\widetilde{z_k})+ \langle G(\widetilde{z_k}), y_0 -\widetilde{z_k} \rangle}{|h_k|} \\
& = \Big \langle G(z_k)-G(\widetilde{z_k}), \frac{h_k}{|h_k|} \Big \rangle + \frac{1}{2^{k-1}} \leq | G(z_k)-G(\widetilde{z_k}) |  + \frac{1}{2^{k-1}} .
\end{align*}
Then \eqref{limitdifferencegradients} leads us to a contradiction. We conclude that $c$ is differentiable on $ \overline{P_X(E^*)}$. 

We now prove the second part of the Lemma. Consider $y\in P_X(E^*)$ and $x \in E^*$ with $P_X(x)=y.$ Using \eqref{derivativesc*inX}, we have $G(x)-v \in \partial c( y).$ Because $c$ is differentiable at $y,$ we further obtain that $G(x)-v = \nabla c(y)$. 
\end{proof}

In order to complete the proof of Theorem \ref{MainTheorem1WithPThenP}, we will need the following Lemma.

\begin{lemma}\label{extensionconvexcoercive}
Let $h:X \to \R$ be a convex and coercive function such that $h$ is differentiable on a closed subset $A$ of $X.$ There exists $H \in C^1(X)$ convex and coercive such that $H=h$ and $\nabla H = \nabla h$ on $A.$ 
\end{lemma}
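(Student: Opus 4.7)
The plan is to reduce Lemma \ref{extensionconvexcoercive} to an application of Corollary \ref{MainCorollaryWithEssentiallyCoerciveExtensions}, invoking the independent proof of that corollary mentioned in the remark following its statement (adapted from \cite[Theorem 1.10]{AzagraMudarra2017PLMS}), so that no circularity arises with the ongoing use of this lemma in the proof of Theorem \ref{MainTheorem1WithPThenP}. Coercivity of $h$ will enter in two places: to enable verification of the corollary's span hypothesis after adjoining finitely many auxiliary points, and to upgrade the resulting essentially coercive extension to a genuinely coercive one.

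First I would verify hypotheses $(i)$--$(iii)$ of Corollary \ref{MainCorollaryWithEssentiallyCoerciveExtensions} for the jet $(h|_A, \nabla h|_A)$. Hypothesis $(i)$ is immediate: the inequality is convexity of $h$, and continuity of $\nabla h$ on $A$ is the standard fact that differentiability of a convex function on a set forces continuity of its gradient on that set. Hypothesis $(ii)$ follows from convexity alone, by dividing the subgradient inequality $\langle \nabla h(x_k), x_k \rangle - h(x_k) \geq \langle \nabla h(x_k), y \rangle - h(y)$ by $|\nabla h(x_k)| \to \infty$ and letting $y \in X$ vary, which forces the quotient to diverge to $+\infty$. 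Hypothesis $(iii)$ is established exactly as in the proof of necessity of condition $(v)$ already given in Section~3: passing to subsequences with $x_k \to x_0 \in A$ (which is closed) and $\nabla h(z_k) \to \xi_0$, taking limits in the hypothesis forces $\xi_0 \in \partial h(x_0) = \{\nabla h(x_0)\}$ by differentiability of $h$ at $x_0 \in A$, and continuity of $\nabla h$ on $A$ then yields $|\nabla h(x_k) - \nabla h(z_k)| \to 0$.

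Next I would enlarge $A$ to $A^{\#} := A \cup \{p_1, \ldots, p_m\}$ by adjoining finitely many points at which $h$ is differentiable (a dense subset of $X$), chosen so that the following two properties hold simultaneously: (a) $\operatorname{span}\{\nabla h(p_i) - \nabla h(p_j) : 1 \leq i, j \leq m\} = X$, and (b) $0 \in \operatorname{int}\operatorname{conv}\{\nabla h(p_1), \ldots, \nabla h(p_m)\}$. Property (a) is achievable because coercivity of $h$ yields $X_h = X$ via Theorem \ref{decomposition theorem}, which in turn forces the affine span of $\nabla h$ over its differentiability points to equal $X$. Property (b) is achievable because coercivity of $h$ on the finite-dimensional space $X$ makes $0$ an interior point of $\operatorname{dom}(h^*)$, so a whole neighborhood of $0$ consists of subgradients of $h$, which can be approximated by gradients at nearby differentiability points. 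I would then check routinely that the enlarged jet on $A^{\#}$ still satisfies hypotheses $(i)$--$(iii)$ — the added $p_j$ are isolated from $A$ by construction, and hypothesis $(iii)$ for sequences landing on a fixed $p_j$ reduces to differentiability of $h$ at $p_j$. Applying Corollary \ref{MainCorollaryWithEssentiallyCoerciveExtensions} then produces a $C^1$ convex essentially coercive $H : X \to \R$ extending $(h|_{A^{\#}}, \nabla h|_{A^{\#}})$.

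For the final upgrade from essentially coercive to coercive, convexity of $H$ together with $H(p_j) = h(p_j)$ and $\nabla H(p_j) = \nabla h(p_j)$ gives the supporting lower bound $H \geq C$ on $X$, where $C(y) := \max_{1 \leq j \leq m}\{h(p_j) + \langle \nabla h(p_j), y - p_j \rangle\}$. Property (b), combined with compactness of the unit sphere in $X$, yields a uniform $\varepsilon > 0$ such that $\max_j \langle \nabla h(p_j), u \rangle \geq \varepsilon$ for every unit $u \in X$, whence $C(x) \geq \varepsilon |x| - O(1) \to +\infty$ as $|x| \to \infty$, making $H$ coercive. The main obstacle I anticipate is a clean execution of step (b) — establishing, from coercivity of $h$ and density of its differentiability points alone, that finitely many gradients of $h$ can be arranged to enclose $0$ in the interior of their convex hull — together with the careful bookkeeping needed to ensure that hypothesis $(iii)$ of the corollary survives the augmentation for sequences that mix points of $A$ and the added auxiliary points $p_j$.
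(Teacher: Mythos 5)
Your proposal reduces the lemma to an application of Corollary~\ref{MainCorollaryWithEssentiallyCoerciveExtensions}, but this is circular in substance. In the paper's logical order that corollary is derived from Theorem~\ref{MainTheorem1WithPThenP}, whose proof uses precisely the lemma you are trying to establish; and the ``independent proof'' of the corollary that the paper alludes to (adapting \cite[Theorem~1.10]{AzagraMudarra2017PLMS}) is never written out, and would in any case have to contain the very construction the lemma encapsulates. Whatever route one takes, one must at some point upgrade a convex function that is merely differentiable on a closed set to a globally $C^1$ convex function with the prescribed $1$-jet there. The paper proves the lemma directly by doing exactly this: apply the classical Whitney extension theorem to get a (non-convex) $\widetilde{h}\in C^1(X)$ with the right jet on $A$; modify it to a differentiable $g\geq h$ that still agrees with $(h,\nabla h)$ on $A$ (via $|h-\widetilde{h}|+2\,d(\cdot,A)^2$ and a Whitney-type smooth approximation away from $A$); and set $H=\textrm{conv}(g)$, which is $C^1$ by the Kirchheim--Kristensen theorem on smoothness of convex envelopes, and is sandwiched $h\leq H\leq g$ so that it inherits coercivity from $h$ and the jet of $h$ on $A$. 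Those two tools --- Whitney extension and regularity of convex envelopes --- are the mathematical content of the lemma and appear nowhere in your write-up: the work has been offloaded rather than done.

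That said, the surrounding bookkeeping in your proposal is mostly correct and your instincts are sound: the verification of hypotheses $(i)$--$(iii)$ of the corollary for the jet $(h|_A,\nabla h|_A)$ is fine, and your plan to recover genuine (rather than merely essential) coercivity by adjoining finitely many differentiability points $p_j$ with $0\in\textrm{int}\,\textrm{conv}\{\nabla h(p_j)\}$ does work, since coercivity gives $h(y)\geq\alpha|y|+\beta$ with $\alpha>0$, hence a ball of subgradients around $0$, and gradients of $h$ at nearby differentiability points approximate any such subgradient. But all of that is ancillary machinery that the paper's direct proof avoids entirely: once $g\geq h$ and $H=\textrm{conv}(g)\geq h$, coercivity of $H$ is automatic and no auxiliary data need be engineered. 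If you want to pursue this route you must at minimum supply the claimed independent proof of the corollary and verify that it does not invoke the lemma; as it stands there is a genuine gap.
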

\begin{proof}
Since $h$ is convex, its gradient $ \nabla h$ is continuous on $A$ (see \cite[Corollary 24.5.1]{Rockafellar} for instance). Then, for all $x,y\in A,$ we have
$$
0 \leq \frac{h(x)-h(y)-\langle \nabla h(y), x-y \rangle}{|x-y|} \leq \Big \langle \nabla h(x)-\nabla h(y), \frac{x-y}{|x-y|} \Big \rangle \leq |\nabla h(x)-\nabla h(y)| ,
$$
where the last term tends to $0$ as $|x-y| \to 0$ uniformly on $x,y\in K$ for every compact subset $K$ of $A.$ This shows that the pair $( h, \nabla h)$ defined on $A$ satisfies the conditions of the classical Whitney Extension Theorem for $C^1$ functions. Therefore, there exists a function $\widetilde{h} \in C^1(X)$ such that $\widetilde{h}= h$ and $\nabla \widetilde{h} = \nabla h$ on $A.$ We now define

\begin{equation}\label{definitionh}
\phi(x):=|h(x)-\widetilde{h}(x)| + 2 d(x,A)^2, \quad x\in X.
\end{equation}

\begin{claim}\label{claimdifferentiabilityphi}
$\phi$ is differentiable on $A$, with $\nabla \phi(x_0)=0$ for every $x_0\in A$.
\end{claim}
\begin{proof}
The function $d(\cdot, A)^{2}$ is obviously differentiable, with a null gradient, at $x_0$, hence we only have to see that $|h-\widetilde{h}|$ is differentiable, with a null  gradient, at $x_0$. Since $\nabla \widetilde{h}(x_0)=\nabla h(x_0),$ the Claim boils down to the following easy exercise: if two functions $h_1, h_2$ are differentiable at $x_0$, with $\nabla h_1(x_0)=\nabla h_2 (x_0)$, then $|h_1-h_2|$ is differentiable, with a null gradient, at $x_0$.
\end{proof}

Now, because $d(\cdot, A)^2$ is continuous and positive on $X \setminus A$, according to Whitney's approximation theorem \cite{Whitney} we can find a function $\varphi \in  C^{\infty}(X \setminus A)$ such that
\begin{equation}\label{approximationc}
|\varphi(x)-\phi(x)|\leq d(x, A)^2 \,\,\, \textrm{ for every } x\in X\setminus A,
\end{equation}

Let us define $\widetilde{\varphi}:X\to\R$ by $\widetilde{\varphi}=\varphi$ on $X\setminus A$ and $\widetilde{\varphi}=0$ on $A$.
\begin{claim}\label{claimdifferentiabilityvarphi}
The function $\widetilde{\varphi}$ is differentiable on $X$ and $\nabla \widetilde{\varphi} = 0$ on $A$.
\end{claim}
\begin{proof}
It is obvious that $\widetilde{\varphi}$ is differentiable on $\textrm{int}(A)\cup \left(X\setminus A\right)$ and $\nabla \widetilde{\varphi} = 0$ on $\textrm{int}(A)$. We only have to check that $\widetilde{\varphi}$ is differentiable on $\partial A$. If $x_0\in \partial A$ we have
$$
\frac{|\widetilde{\varphi}(x)-\widetilde{\varphi}(x_0)|}{|x-x_0|}=
\frac{|\widetilde{\varphi}(x)|}{|x-x_0|}\leq \frac{|\phi(x)|+d(x, A)^2}{|x-x_0|}\to 0
$$
as $|x-x_0|\to 0^{+}$, because both $\phi$ and $d(\cdot, A)^2$ vanish at $x_0$ and are differentiable, with null gradients, at $x_0$. Therefore $\widetilde{\varphi}$ is differentiable at $x_0$, with $\nabla \widetilde{\varphi}(x_0)=0$.
\end{proof}

Now we set $$g:=\tilde{h}+ \tilde{\varphi}$$ on $X$. It is clear that $g=h$ on $A.$ Also, by Claim \ref{claimdifferentiabilityvarphi}, $g$ is differentiable on $X$ with $\nabla g = \nabla h$ on $A.$ By combining \eqref{definitionh} and \eqref{approximationc} we easily obtain that 
$$
g(x) \geq \tilde{h}(x) + \phi(x) - d(x,A)^2 \geq h(x) \quad x\in X \setminus A.
$$
Therefore $g \geq h$ on $X$ and in particular $g$ is coercive on $X$, because so is $h$, by assumption.
 
We next consider the \textit{convex envelope} of $g.$ Recall that, for a function $\psi:X \to\R,$ the convex envelope of $\psi$ is defined by
$$
\textrm{conv}(\psi)(x)=\sup\{ \Phi(x) \, : \, \Phi \textrm{ is convex }, \Phi\leq \psi\}
$$
(another expression for  $\textrm{conv}(\psi)$, which follows from Carath\'eodory's Theorem, is
$$
\textrm{conv}(\psi)(x)=\inf\left\lbrace \sum_{j=1}^{n+1}\lambda_{j} \psi(x_j) \, : \, \lambda_j\geq 0,
\sum_{j=1}^{n+1}\lambda_j =1, x=\sum_{j=1}^{n+1}\lambda_j x_j \right\rbrace;
$$
see \cite[Corollary 17.1.5]{Rockafellar} for instance). The following result is a restatement of a particular case of the main theorem in \cite{KirchheimKristensen}; see also \cite{GriewankRabier}.

\begin{theorem}[Kirchheim-Kristensen]
If $\psi:X \to\R$ is differentiable and $\lim_{|x|\to\infty}\psi(x)=\infty$, then $\textrm{conv}(\psi)\in C^1(X)$.
\end{theorem}

If we define $$H=\textrm{conv}(g)$$ we immediately get that $H$ is convex on $X$ and $H\in C^{1}(X).$ By definition of $H$ we have that $h \leq H \leq g$ on $X,$ which implies that $H$ is coercive. Also, because $g=h$ on $A,$ we have that $H=h$ on $A.$ In order to show that $\nabla H = \nabla h$ on $A,$ we use the following well known criterion for differentiability of convex functions, whose proof is straightforward.
\begin{lemma}\label{differentiability criterion for convex functions}
If $\psi$ is convex, $\Phi$ is differentiable at $x$, $\psi\leq \Phi$, and $\psi(x)=\Phi(x)$, then $\psi$ is differentiable at $x$, with $\nabla\psi(x)=\nabla\Phi(x)$.
\end{lemma}
\noindent (This fact can also be phrased as: a convex function $\psi$ is differentiable at $x$ if and only if $\psi$ is superdifferentiable at $x$.)

Since $h$ is convex and $H$ is differentiable on $X$ with $h=H$ on $A$ and $h\leq H$ on $X,$ the preceding Lemma shows that $\nabla H = \nabla h$ on $A$. 

This completes the proof of Lemma \ref{extensionconvexcoercive}.
\end{proof}

Now we are able to finish the proof of Theorem \ref{MainTheorem1WithPThenP}. Setting $A: = \overline{P_X(E^*)},$ we see from Lemma \ref{differentiabiltyclosurec^*} that $c$ is differentiable on $A.$ Moreover, since $c :X \to \R$ is convex and essentially coercive on $X,$ there exists $\eta \in X$ such that $h:=c- \langle \eta, \cdot \rangle$ is convex, differentiable on $A$ and coercive on $X.$ Applying Lemma \ref{extensionconvexcoercive} to $h,$ we obtain $H \in C^1(X)$ convex and coercive on $X$ with $(H, \nabla H)=(h, \nabla h)$ on $A.$ Thus, the function $\varphi := H+ \langle \eta, \cdot \rangle$ is convex, essentially coercive on $X$ and of class $C^1(X)$ with $(\varphi, \nabla \varphi)=(c, \nabla c)$ on $A.$ We next show that $F:= \varphi \circ P_X + \langle v, \cdot \rangle$ is the desired extension of $(f,G).$ Since $\varphi$ is $C^1(X)$ and convex, it is clear that $F$ is $C^1(\R^n)$ and convex as well. Bearing in mind Theorem \ref{decomposition theorem} and the fact that $
 \varphi$ is essentially coercive, it follows that $X_F=X.$ Also, since $\varphi(y)=c(y)$ for $y\in P_X(E),$ we obtain from \eqref{formuladecompositionminimal} and Lemma \ref{finitenessminimal} that
$$
F(x)= \varphi(P_X(x))+\langle v, x \rangle = c (P_X(x))+\langle v, x \rangle=m(x)=f(x).
$$
Finally, from the second part of Lemma \ref{differentiabiltyclosurec^*}, we have, for all $x\in E$, that
$$
\nabla F(x)= \nabla \varphi (P_X(x)) + v = G(x)-v + v = G(x).
$$
The proof of Theorem \ref{MainTheorem1WithPThenP} is complete. \qed

\section{Necessity of Theorem \ref{maintheoremlipschitzc1}}

We already know that conditions $(i), (ii)$ and $(iv)$ are necessary for the existence of a convex function $F\in C^1(\R^n)$ with $(f,G)=(F, \nabla F)$ on $E$ and $X_F=X.$ Let us assume that $F$ is also Lipschitz, and let us prove that in this case condition $(iii)$ is satisfied as well. If $\textrm{Lip}(F)=0$ then $F$ is constant, so we have $X=X_{F}=\{0\}=Y$, and condition $(iii)$ is trivially satisfied. Otherwise we have $X=X_{F}\neq\{0\}$, and assuming that $Y\neq X$ we may find points $x_0, x_1, \ldots, x_k \in E$ and $p_1, \ldots, p_{d-k} \in \R^n \setminus \overline{E}$ such that 
$$
Y= \textrm{span} \lbrace G(x_j)-G(x_0) \: : \: j=1, \ldots, k \rbrace,
$$
$$
\nabla F(p_j)-G(x_0) \in X \setminus Y \quad \text{for every} \quad j=1, \ldots, d-k
$$
and the set $\lbrace \nabla  F(p_j)-G(x_0) \: : \: j=1, \ldots, d-k \rbrace$ is linearly independent. Now we define, for each $j=1,\ldots, d-k,$ the subspace $Y_j$ spanned by $Y$ and the vector $\nabla F(p_j)-G(x_0).$ Obviously we can find $w_j \in Y_j \cap Y^\perp$ with $|w_j|=1$ and $Y_j = Y \oplus [w_j],$ for every $j=1, \ldots, d-k.$ Moreover, $w_j$ can be taken so that 
$$
\mu_j : = \langle  \nabla F(p_j)-G(x_0), w_j \rangle >0, \quad \text{for all} \quad j=1, \ldots, d-k.
$$
Let us take $\varepsilon >0$ small enough so that
$$
\varepsilon < \frac{\mu_j}{2\lip(F)+2 \| G\|_\infty} \quad \text{for all} \quad j=1, \ldots, d-k.
$$
Note that, because $\mu_j \leq 2 \lip(F)$ for each $j,$ we have that $\varepsilon \leq 1.$ Now, assume that there exists some $x\in \overline{E}$ with $x\in V_j:= \lbrace x\in \R^n \: : \:  \varepsilon \langle w_j,x- p_j \rangle \geq |P_Y(x-p_j)|\rbrace$ for some $j=1,\ldots, d-k$ Using the convexity of $F$ we can easily write
\begin{align*}
F(x)& -F(p_j)-\langle \nabla F(p_j), x-p_j \rangle \\
& \leq \langle \nabla F(x)-\nabla F(p_j), x-p_j \rangle = \langle \nabla F(x)-G(x_0), x-p_j \rangle + \langle  G(x_0)-\nabla F(p_j), x-p_j \rangle \\
& = \langle \nabla F(x)-G(x_0), x-p_j \rangle - \mu_j \langle w_j,x-p_j \rangle + \langle P_Y ( G(x_0)-\nabla F(p_j) ), x-p_j \rangle.
\end{align*}
Since we are assuming that $x\in \overline{E},$ the continuity of $\nabla F$ yields $\nabla F(x)-G(x_0) \in Y.$ Then, the last term coincides with
\begin{align*}
\langle \nabla F(x) & -G(x_0), P_Y(x-p_j) \rangle - \mu_j \langle w_j,x-p_j \rangle + \langle P_Y ( G(x_0)-\nabla F(p_j) ), P_Y(x-p_j) \rangle \\
& \leq \left(  2 \| G \|_\infty + 2 \lip(F) \right) |P_Y(x-p_j)|- \mu_j \langle w_j,x-p_j \rangle \leq 0,
\end{align*}
where the last inequality follows from the definition of $\varepsilon$ and the fact that $x \in V_j.$ We have thus shown that
$$
F(x)-F(p_j)-\langle \nabla F(p_j), x-p_j \rangle =0,
$$ which implies, by condition $(CW^1),$ that $\nabla F(p_j)=\nabla F(x),$ where $x\in \overline{E}.$ It follows that $\nabla F(p_j) -G(x_0) = \nabla F(x)-G(x_0) \in Y,$ which contradicts the choice of $p_j.$ Therefore $\overline{E}$ and $\bigcup_{j=1}^{d-k} V_j$ are disjoint.

\section{Sufficiency of Theorem \ref{maintheoremlipschitzc1}. Keeping control of the Lipschitz constant}

If $m$ denotes the \textit{minimal convex extension} of the jet $(f,G)$ from $E,$  we can write
$$
m = c \circ P_{X_m} + \langle v, \cdot \rangle,
$$
where $v\in \R^n$ and $c:X_m \to \R$ is a coercive convex function. Moreover, we know that $X_m=Y= \textrm{span} \lbrace G(x)-G(y) \: : \: x,y \in E \rbrace
$ and therefore
\begin{equation}\label{firstdecompositionminimal}
m = c \circ P_{Y} + \langle v, \cdot \rangle.
\end{equation}

Let us prove some properties of $m,c$ and $v.$ 

\begin{lemma}\label{lemmalipschitzconstantdecomposition}
Let us denote by $K= \| G \|_\infty= \sup_{y\in E} |G(y)|$. We have that:
\begin{enumerate} 
\item[$(1)$] The function $m$ is $K$-Lipschitz on $\R^n.$
\item[$(2)$] The vector $v$ belongs to the subdifferential of $m$ at some point $y_0 \in Y$, and $|v| \leq K.$
\item[$(3)$] There exists points $x_1, \ldots, x_k \in E$ such that $\lbrace G(x_j)-v \rbrace_{j=1}^k$ is a basis of $Y.$
\item[$(4)$] The function $c$ is $2K$-Lipschitz on $Y.$
\item[$(5)$] There exists numbers $0 < \alpha \leq 2K$ and $\beta \in \R$ such that $c(y) \geq \alpha | y| + \beta$ for every $y\in Y$.
\end{enumerate}
\end{lemma}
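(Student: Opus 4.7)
The plan is as follows. For $(1)$, I would note that $m$ is by definition the supremum of the affine functions $z\mapsto f(y)+\langle G(y),z-y\rangle$ with $y\in E$, each of which is $K$-Lipschitz because $|G(y)|\leq K$; since $m$ has already been shown to be finite-valued in the proof for the $C^1$ case, this supremum is $K$-Lipschitz on all of $\R^n$. In everything that follows I would freely use the decomposition $m=c\circ P_Y+\langle v,\cdot\rangle$ of \eqref{firstdecompositionminimal}, together with the fact that by Theorem \ref{decomposition theorem} we have $v\in Y^{\perp}$, so that $\langle v,y\rangle=0$ for every $y\in Y$ and consequently $m(y)=c(y)$ on $Y$. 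For $(2)$, since $c:Y\to\R$ is convex, continuous and coercive on a finite dimensional space, it attains a minimum at some point $y_0\in Y$, and then for every $x\in\R^n$
\[
m(x)=c(P_Y(x))+\langle v,x\rangle\;\geq\; c(y_0)+\langle v,x\rangle \;=\; m(y_0)+\langle v,x-y_0\rangle,
\]
which shows $v\in\partial m(y_0)$; the bound $|v|\leq K$ is then immediate from $(1)$.

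For $(3)$ I would invoke the Claim proved in Section $4$ (its proof uses only the decomposition $m=c\circ P_X+\langle v,\cdot\rangle$ and hence applies verbatim to the present situation with $X=X_m=Y$): every $\eta\in\partial m(x)$ satisfies $\eta-v\in Y$. Combined with Lemma \ref{finitenessminimal}, which says $G(x)\in\partial m(x)$ for every $x\in E$, this gives $G(x)-v\in Y$ for all $x\in E$. Since $G(x)-G(y)=(G(x)-v)-(G(y)-v)$, the inclusion $Y=\mathrm{span}\{G(x)-G(y):x,y\in E\}\subseteq\mathrm{span}\{G(x)-v:x\in E\}\subseteq Y$ is actually an equality, and one may extract a basis $\{G(x_j)-v\}_{j=1}^{k}$ of $Y$ from $E$.

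For $(4)$, the identity $c(y)=m(y)-\langle v,y\rangle$ valid on $Y$ (because $v\perp Y$) yields, for $y,y'\in Y$,
\[
|c(y)-c(y')|\;\leq\;|m(y)-m(y')|+|v|\,|y-y'|\;\leq\;(K+|v|)|y-y'|\;\leq\;2K|y-y'|.
\]
Finally, for $(5)$ (assuming $K>0$; the case $K=0$ forces $Y=\{0\}$ and is trivial), I would use the coercivity of $c$ on the finite dimensional space $Y$: the sublevel set $\{c\leq c(y_0)+1\}$ is closed and bounded, hence contained in a ball about the origin of some radius, so I can choose $R\geq\max(1/(2K),\,|y_0|)$ large enough that $c(y)\geq c(y_0)+1$ whenever $|y-y_0|=R$. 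Convexity of $c$ along every ray emanating from $y_0$ then yields $c(y)\geq c(y_0)+|y-y_0|/R$ for all $|y-y_0|\geq R$; after a reverse triangle inequality and absorbing the compact region $|y-y_0|<R$ into the additive constant, this becomes $c(y)\geq\alpha|y|+\beta$ with $\alpha=1/R\in(0,2K]$ and a suitable $\beta\in\R$.

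None of the steps is really hard, but the one where care is most needed is $(3)$: one must remember to apply the Claim of Section $4$ with the \emph{actual} essential coercivity subspace $Y=X_m$ of the minimal extension, rather than the prescribed subspace $X$ appearing in the statement of Theorem \ref{maintheoremlipschitzc1}; this is what forces $G(x)-v$ into $Y$ and makes the basis construction possible.
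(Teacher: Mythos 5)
Your overall plan tracks the paper's closely, and your computations for each item are essentially the ones the paper gives. But there is one assertion at the outset that is incorrect and, if taken seriously, would actually break parts $(2)$ and $(5)$: namely, the claim that ``by Theorem \ref{decomposition theorem} we have $v\in Y^{\perp}$, so that $\langle v,y\rangle=0$ for every $y\in Y$ and consequently $m(y)=c(y)$ on $Y$.''

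The decomposition \eqref{firstdecompositionminimal} is \emph{not} the canonical one of Theorem \ref{decomposition theorem}. Theorem \ref{decomposition theorem} would give $m=c_m\circ P_Y+\langle v_m,\cdot\rangle$ with $c_m$ only \emph{essentially} coercive and $v_m\in Y^{\perp}$; but then $c_m=m|_Y$, and a $K$-Lipschitz convex function whose $X_m$ equals $Y$ need not be coercive on $Y$ (take $E=\{0,1\}\subset\R$, $f(0)=0$, $f(1)=1$, $G\equiv$ identity gradient, so $m(t)=\max(0,t)$: here $Y=\R$ and $m|_Y=\max(0,t)$ is essentially coercive but not coercive). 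In that case $c_m$ need not attain a minimum and its sublevel sets need not be bounded, so both your argument for $(2)$ (existence of the minimizer $y_0$) and your argument for $(5)$ (boundedness of the sublevel set) would fail. What the paper does instead in the paragraph preceding the lemma is to absorb the linear part of $c_m$ into the affine term: it writes $c=c_m-\langle w,\cdot\rangle$ for a suitable $w\in Y$ and $v=v_m+w$, so that $c$ is genuinely \emph{coercive}, at the price that $v$ is merely some vector in $\R^n$, not generally orthogonal to $Y$. Fortunately, none of your actual computations uses $v\perp Y$: your chain of inequalities in $(2)$ only uses $m(y_0)=c(y_0)+\langle v,y_0\rangle$, your bound in $(4)$ only uses $c(y)=m(y)-\langle v,y\rangle$ together with $|v|\le K$, and your $(5)$ only uses the coercivity of $c$. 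So the proof is correct after deleting that one sentence and reading $v$ as the paper's (not necessarily orthogonal) vector. Your $(1)$ and $(3)$ are exactly the paper's arguments (in $(3)$ you are right that the Claim of Section $4$ applies verbatim with $X=X_m=Y$, and it does not rely on $v$ being orthogonal to $X$). In $(5)$ you give a self-contained hands-on proof where the paper invokes the well-known fact; both are fine, and the bound $\alpha\le 2K$ comes out the same way.
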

\begin{proof}
\item[$(1)$] The function $m$ is a supremum of $K$-Lipschitz affine functions on $\R^n$ and therefore $m$ is $K$-Lipschitz as well.
\item[$(2)$] Since $c$ is coercive on $Y,$ there exists a point $y_0 \in Y$ with $c(y) \geq c(y_0)$ for every $y\in Y.$ We then have, for every $x\in \R^n$ that
$$
m(x) = c(P_Y(x)) + \langle v,x \rangle \geq c(y_0)+ \langle v,x \rangle = c(y_0)+ \langle v,y_0 \rangle + \langle v, x-y_0 \rangle = m(y_0) + \langle v, x-y_0 \rangle,
$$
which implies that $v\in \partial m (y_0).$ Since $m$ is $K$-Lipschitz, we obtain, for every $x\in \R^n,$
$$
K|x-y_0| + m(y_0) \geq m(x) \geq m(y_0) + \langle v, x-y_0 \rangle,
$$
which implies that $\langle v, \frac{x-y_0}{|x-y_0|} \rangle \leq K,$ for every $x \in \R^n \setminus \lbrace y_0 \rbrace.$ This shows that $|v| \leq K.$

\item[$(3)$] Recall that $\eta - v\in Y$ for every $\eta \in \partial m(x)$.  In particular we have $G(x)-v\in Y$ for every $x\in E.$ Let us take some $x_1 \in Y$ with $G(x_1)-v \neq 0.$ If $\dim(Y)=1,$ there is nothing to say. If $\dim(Y)>1,$ we claim that there exists some $x_2 \in E$ such that $G(x_2)-v$ and $G(x_1)-v$ are linearly independent. Indeed, assume that $G(x)-v$ and $G(x_1)-v$ are proportional for every $x\in E$. Then we would have for every $x,y \in E$ that
$$
G(x)-G(y) = (G(x)-v) + (v-G(y))
$$
is proportional to $G(x_1)-v$, hence $\dim(Y)=1$, a contradiction. Using an inductive argument we easily obtain $(3)$.
\item[$(4)$] follows at once from $(1), (2)$, and the fact that $c = m- \langle v, \cdot \rangle$ on $Y$.
\item[$(5)$] It is well known and easy to show that for every coercive convex function $c$  there exist numbers $\alpha >0$ and $\beta \in \R$ such that $c(y) \geq \alpha | y| + \beta$ for every $y\in Y.$ Now, because $c$ is $2K$-Lipschitz, we have that
$$
c(0)+ 2K |y| \geq c(y) \geq \alpha |y| + \beta, \quad y\in Y.
$$
This clearly implies that $\alpha \leq 2K.$
\end{proof}

\subsection{Defining new data}

Let us consider $w_1,\ldots, w_{d-k} \in Y ^{\perp} \cap X, \: \varepsilon \in (0,1), \:p_1, \ldots, p_{d-k}$ and $V_1,\ldots, V_{d-k}$ as in condition $(iii)$ of Theorem \ref{maintheoremlipschitzc1}. Using Lemma \ref{lemmalipschitzconstantdecomposition}(5), we consider a positive $T> 0$ large enough so that
$$
 (\varepsilon \: \alpha) T \geq 2 - \beta- \max_{j=1,\ldots, d-k} \lbrace \alpha |P_Y(p_j)| + m(p_j)-\langle v, p_j \rangle \rbrace
$$
and
$$
(\varepsilon \: \alpha) T \min_{1 \leq i \neq j \leq d-k} \lbrace 1- \langle w_i,w_j\rangle \rbrace \geq 1 + \max_{1 \leq i,j \leq d-k} \lbrace c(P_Y(p_j))-c(P_Y(p_i)) + \varepsilon \alpha \langle w_j, p_i-p_j \rangle \rbrace.
$$
Note that, since the vectors $\lbrace w_i \rbrace_{i=1}^{d-k}$ have norm equal to $1,$ then $\langle w_i, w_j \rangle =1$ if and only if $w_i=w_j,$ which is equivalent (as the vectors $\{w_1, \ldots , w_{d-k}\}$ are linearly independent) to $i=j.$ So it is clear that we can find a positive $T>0$ satisfying both inequalities. We define the following new data:
\begin{equation}\label{definitionnewjets}
q_j= p_j + T w_j, \quad f(q_j) = m(q_j)+1, \quad G(q_j)= v + \varepsilon \alpha w_j, \quad j=1, \ldots, d-k .
\end{equation}
Note that $q_i=q_j$ if and only if $p_i-p_j= T (w_j-w_i).$ Since $w_i\neq w_j$ whenever $i \neq j,$ it is clear that we can take $T$ large enough so that the points $q_i$ and $q_j$ are distinct if $i \neq j.$ On the other hand, because each $w_j$ is orthogonal to $Y,$ we immediately see that $q_j \in V_j$ and, in particular, $q_j \notin \overline{E}$ for every $j=1,\ldots, d-k.$ 

\begin{lemma} \label{lemmadatasarecompatible}
The following inequalities are satisfied.
\item[$(1)$] $f(q_j)-f(x)-\langle G(x), q_j-x \rangle \geq 1$ for every $x\in E,\: j=1,\ldots, d-k.$
\item[$(2)$] $f(x)-f(q_j)-\langle G(q_j), x-q_j \rangle \geq 1$ for every $x\in E,\: j=1,\ldots, d-k.$
\item[$(3)$] $f(q_i)-f(q_j)-\langle G(q_j), q_i-q_j \rangle \geq 1$ for every $1 \leq i \neq j\leq  d-k.$
\end{lemma}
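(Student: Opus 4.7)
The plan is to verify the three inequalities directly, using the decomposition $m = c\circ P_Y + \langle v,\cdot\rangle$ from \eqref{firstdecompositionminimal} together with the two defining inequalities on $T$ and the cone-exclusion condition $\overline{E}\cap V_j = \emptyset$ from hypothesis $(iii)$.

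Inequality (1) is essentially tautological: since $x\in E$, the definition of $m$ as the supremum of the affine functions $y\mapsto f(z)+\langle G(z),y-z\rangle$ for $z\in E$ already yields $m(q_j)\geq f(x)+\langle G(x), q_j-x\rangle$, and substituting $f(q_j)=m(q_j)+1$ into the left-hand side gives the claim at once.

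For (2), I would start from $m|_E = f$ (so $f(x)=c(P_Y(x))+\langle v,x\rangle$ for $x\in E$), observe that $w_j\in Y^\perp$ forces $P_Y(q_j)=P_Y(p_j)$, and use $|w_j|=1$ to rewrite $\langle w_j,x-q_j\rangle=\langle w_j,x-p_j\rangle-T$. Expanding $f(x)-f(q_j)-\langle G(q_j),x-q_j\rangle$, the $\langle v,\cdot\rangle$ terms cancel and what remains is
\[
c(P_Y(x))-c(P_Y(p_j))-\varepsilon\alpha\langle w_j,x-p_j\rangle+\varepsilon\alpha T-1.
\]
Now $x\in\overline{E}\cap V_j=\emptyset$ forces $\varepsilon\langle w_j,x-p_j\rangle<|P_Y(x-p_j)|$, so $-\varepsilon\alpha\langle w_j,x-p_j\rangle\geq-\alpha|P_Y(x-p_j)|$, and the coercivity estimate $c(y)\geq\alpha|y|+\beta$ from Lemma \ref{lemmalipschitzconstantdecomposition}(5) combined with the reverse triangle inequality $|P_Y(x)|-|P_Y(x-p_j)|\geq-|P_Y(p_j)|$ gives a lower bound of
\[
\varepsilon\alpha T+\beta-\alpha|P_Y(p_j)|-c(P_Y(p_j))-1.
\]
Rewriting $c(P_Y(p_j))=m(p_j)-\langle v,p_j\rangle$ and invoking the first defining inequality for $T$ makes this quantity at least $2-1=1$.

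For (3) the calculation is more direct: expanding with $q_i-q_j=(p_i-p_j)+T(w_i-w_j)$, cancelling the $\langle v,\cdot\rangle$ contributions and using $P_Y(q_i)=P_Y(p_i)$, $P_Y(q_j)=P_Y(p_j)$, $|w_j|=1$, I obtain
\[
c(P_Y(p_i))-c(P_Y(p_j))-\varepsilon\alpha\langle w_j,p_i-p_j\rangle+\varepsilon\alpha T(1-\langle w_i,w_j\rangle).
\]
Since the normalized vectors $w_1,\ldots,w_{d-k}$ are linearly independent and distinct, $1-\langle w_i,w_j\rangle>0$ whenever $i\neq j$, and the second defining inequality for $T$ was chosen precisely so that the displayed quantity is $\geq 1$ uniformly in the pair $(i,j)$.

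The main obstacle I anticipate is the careful bookkeeping in (2), where three ingredients must be combined without slack: the one-sided bound on the $w_j$-component of $x-p_j$ coming from the cone-exclusion, the coercivity of $c$ on $Y$ converting $|P_Y(x)|$ into a usable lower bound, and the reverse triangle inequality bridging $P_Y(x)$ and $P_Y(x-p_j)$. The alignment works because the constant $\alpha$ in the coercivity of $c$ is exactly the $\alpha$ used to define $G(q_j)=v+\varepsilon\alpha w_j$, which is no accident: this matching is precisely why $\alpha$ was extracted in Lemma \ref{lemmalipschitzconstantdecomposition}(5) and then reused in \eqref{definitionnewjets}.
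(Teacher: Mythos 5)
Your argument is correct and coincides with the paper's proof in all essential respects: (1) is the same one-line observation; (2) combines, in a slightly different order, exactly the same four ingredients (the decomposition of $m$, the cone-exclusion $x\notin V_j$, the coercivity bound $c\geq\alpha|\cdot|+\beta$ with the reverse triangle inequality, and the first defining inequality for $T$); and (3) is the same expansion followed by the second defining inequality for $T$. The only difference is cosmetic — the paper applies the coercivity/triangle/$T$-choice estimates before the cone-exclusion, while you apply the cone-exclusion first — and your closing remark about why the same constant $\alpha$ appears in both Lemma \ref{lemmalipschitzconstantdecomposition}(5) and the definition \eqref{definitionnewjets} correctly identifies the structural reason the bookkeeping closes up.
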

\begin{proof}
\item[$(1)$] Since $f(q_j) = m(q_j)+1,$ the definition of $m$ leads us to
$$
f(q_j)-f(x)-\langle G(x), q_j-x \rangle = m(q_j)-f(x)-\langle G(x), q_j-x \rangle + 1 \geq 1,
$$
for $x\in E, \: j=1, \ldots, d-k.$
\item[$(2)$] We fix $x \in E$ and $j=1, \ldots, d-k.$ The decomposition of $m$ yields 
$$
m(q_j)=c (P_Y(p_j)+P_Y(T w_j) ) + \langle v, q_j \rangle = c (P_Y(p_j)) + \langle v, q_j \rangle = m(p_j)+ \langle v, q_j-p_j \rangle.
$$
We obtain from this
\begin{align*}
f(x)-f(q_j)& -\langle G(q_j), x-q_j \rangle  = m(x)-m(p_j)+\langle v, p_j-q_j \rangle-\langle G(q_j), x-q_j \rangle -1 \\
& = c\circ(P_Y(x)) + \langle v,x \rangle -m(p_j)+\langle v, p_j-q_j \rangle-\langle v + \varepsilon \alpha w_j , x-q_j \rangle -1 \\
& = c\circ(P_Y(x))  -m(p_j)+\langle v, p_j\rangle-\varepsilon \alpha \langle   w_j , x-q_j \rangle -1 \\
& =  c\circ(P_Y(x))  -m(p_j)+\langle v, p_j\rangle-\varepsilon \alpha \langle   w_j , x-p_j \rangle -\varepsilon \alpha \langle   w_j , p_j-q_j \rangle-1 \\
& =  c\circ(P_Y(x))  -m(p_j)+\langle v, p_j\rangle-\varepsilon \alpha \langle   w_j , x-p_j \rangle +\varepsilon \alpha T-1.
\end{align*}
Now, using Lemma \ref{lemmalipschitzconstantdecomposition}(5), the last term is bigger than or equal to
\begin{align*}
\alpha | P_Y(x)| & + \beta -m(p_j)+\langle v, p_j\rangle-\varepsilon \alpha \langle   w_j , x-p_j \rangle +\varepsilon \alpha T -1 \\
& \geq \alpha | P_Y(x-p_j)| - \alpha | P_Y(p_j)| + \beta -m(p_j)+\langle v, p_j\rangle-\varepsilon \alpha \langle   w_j , x-p_j \rangle +\varepsilon \alpha T -1 \\
& \geq \alpha | P_Y(x-p_j)| -\varepsilon \alpha \langle   w_j , x-p_j \rangle +1,
\end{align*}
where the last inequality follows from the choice of $T.$ Now, since $x\in E,$ the condition $(iii)$ tells us that $x$ does not belong to the cone $V_j,$ which implies that the last term is greater than or equal to
$$
\varepsilon \alpha \langle   w_j , x-p_j \rangle -\varepsilon \alpha \langle   w_j , x-p_j \rangle  +1 =1.
$$
This establishes the inequalities of $(2).$ 
\item[$(3)$] Consider $1 \leq i \neq j\leq d-k.$ Notice that
$$
f(q_i)-f(q_j)=c (P_Y(p_i+T w_i)) - c (P_Y(p_j+T w_j)) + \langle v, q_i-q_j \rangle =  c (P_Y(p_i))-c (P_Y(p_j))+\langle v, q_i-q_j \rangle.
$$
This implies
\begin{align*}
f(q_i)-f(q_j)& -\langle G(q_j), q_i-q_j \rangle = c (P_Y(p_i))-c (P_Y(p_j))+\langle v, q_i-q_j \rangle- \langle v+ \varepsilon \: \alpha \:  w_j, q_i-q_j \rangle \\
& = c (P_Y(p_i))-c (P_Y(p_j))- \varepsilon \: \alpha \langle w_j, p_i-p_j+ T(w_i-w_j) \rangle \\
& = c (P_Y(p_i))-c (P_Y(p_j))- \varepsilon \: \alpha \langle w_j, p_i-p_j \rangle + \varepsilon \: \alpha \: T \left( 1- \langle w_i, w_j \rangle \right) \geq 1,
\end{align*}
where the last inequality follows from the choice of $T.$ 
\end{proof}

\bigskip

\subsection{Properties of the new jet}

We now define the set $E^*=E \cup \lbrace q_1, \ldots, q_{d-k} \rbrace.$ Note that we have already extended the definition of $(f,G)$ to $E^*.$ 

\begin{lemma}\label{lemmapropertiesnewjet}
We have that:
\begin{enumerate}
\item[$(1)$] $X= \textrm{span} \lbrace G(x)-G(y) \: :\: x,y\in E^*\rbrace.$ 
\item[$(2)$] $G$ is continuous on $E^*$ and $f(x) \geq f(y)+\langle G(y),x-y \rangle$ for all $x,y \in E^*.$ 
\item[$(3)$] $|G(x)| \leq 3K$ for every $x\in E^*.$
\item[$(4)$] If $(x_\ell)_\ell, (z_\ell)_\ell$ are sequences in $E^*$ such that $(P_X(x_\ell))_\ell$ is bounded and
$$
\lim_{\ell \to\infty} \left( f(x_\ell)-f(z_\ell)- \langle G(z_\ell), x_\ell-z_\ell \rangle \right) = 0,
$$
then $\lim_{\ell \to\infty} | G(x_\ell)-G(z_\ell) | = 0$.
\end{enumerate}
\end{lemma}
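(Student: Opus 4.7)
The plan is to verify the four statements in turn, exploiting the estimates already established in Lemmas \ref{lemmalipschitzconstantdecomposition} and \ref{lemmadatasarecompatible} together with the geometric separation between the $q_j$'s and $\overline{E}$.

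For $(1)$, I would first observe that the differences $G(q_i)-G(q_j) = \varepsilon \alpha (w_i-w_j)$ and, for any $x\in E$, $G(q_j)-G(x) = (v - G(x)) + \varepsilon \alpha w_j$. By Lemma \ref{lemmalipschitzconstantdecomposition}(2) and the fact that $Y = \mathrm{span}\{G(x)-G(y) : x,y\in E\}$, we have $v - G(x) \in Y$ for every $x\in E$ (indeed $v \in \partial m(y_0)$ implies $v - G(x_0) \in Y$ for the fixed base point, and then $v-G(x) = (v-G(x_0)) - (G(x)-G(x_0)) \in Y$). Since $w_1,\ldots,w_{d-k}$ are linearly independent vectors of $Y^\perp\cap X$ and $\dim(Y^\perp\cap X) = d-k$, they form a basis of $Y^\perp \cap X$. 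Combined with the fact that differences within $G(E)$ span $Y$, I conclude $\mathrm{span}\{G(x)-G(y):x,y\in E^*\} = Y \oplus (Y^\perp\cap X) = X$.

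For $(2)$, continuity of $G$ on $E$ is assumed; since each $q_j$ lies in the cone $V_j$ which is disjoint from $\overline{E}$, and the $q_j$ are pairwise distinct by the choice of $T$, each $q_j$ is isolated in $E^*$, so continuity is automatic at those points. The convexity-type inequalities $f(x)\geq f(y)+\langle G(y),x-y\rangle$ hold on $E\times E$ by hypothesis $(i)$, and on the mixed pairs and on $\{q_i\}\times\{q_j\}$ they follow directly from Lemma \ref{lemmadatasarecompatible}, whose conclusions give even a strict gap of at least $1$. Statement $(3)$ is routine: for $x\in E$ we have $|G(x)|\leq K$; for $x=q_j$, $|G(q_j)| \leq |v| + \varepsilon\alpha|w_j| \leq K + \varepsilon \cdot 2K \cdot 1 \leq 3K$ by Lemma \ref{lemmalipschitzconstantdecomposition}(2),(5) and $\varepsilon<1$.

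The main obstacle is $(4)$, and the key idea is that the $+1$ gaps in Lemma \ref{lemmadatasarecompatible} prevent any mixing in the limit. Suppose $(x_\ell)_\ell, (z_\ell)_\ell \subset E^*$ satisfy the hypothesis. I would first argue that $(G(z_\ell))_\ell$ is automatically bounded: otherwise, passing to a subsequence we have $z_\ell \in E$ with $|G(z_\ell)|\to\infty$, and since $(P_X(x_\ell))_\ell$ is bounded while $\langle G(z_\ell), z_\ell\rangle - f(z_\ell)$ dominates $|G(z_\ell)|$ by condition $(ii)$, the quantity $f(x_\ell)-f(z_\ell)-\langle G(z_\ell), x_\ell - z_\ell\rangle$ would tend to $+\infty$, contradiction (this is the same calculation used in Lemma \ref{finitenessminimal}). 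Now, by the inequalities $(1),(2),(3)$ of Lemma \ref{lemmadatasarecompatible}, the assumption that the sequence $f(x_\ell)-f(z_\ell)-\langle G(z_\ell), x_\ell-z_\ell\rangle \to 0$ forces, for $\ell$ large, that either $x_\ell, z_\ell \in E$ simultaneously or else $x_\ell = z_\ell = q_j$ for a common index $j$. In the first case the conclusion $|G(x_\ell)-G(z_\ell)|\to 0$ is condition $(iv)$ of Theorem \ref{maintheoremlipschitzc1} (using that $(P_X(x_\ell))_\ell$ is bounded and $(G(z_\ell))_\ell$ is bounded, which is what we need). In the second case the conclusion is trivial. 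Passing to subsequences and combining the two cases yields the desired limit, completing the proof.
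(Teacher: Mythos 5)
Your proof follows essentially the same route as the paper's: items $(1)$--$(3)$ are verified by direct computation using Lemma \ref{lemmalipschitzconstantdecomposition} and the definitions \eqref{definitionnewjets}, and item $(4)$ exploits the $+1$ gaps of Lemma \ref{lemmadatasarecompatible} to conclude that, for $\ell$ large, either $x_\ell = z_\ell = q_j$ for some $j$ or else $x_\ell, z_\ell \in E$, after which condition $(iv)$ of Theorem \ref{maintheoremlipschitzc1} applies. That is exactly the paper's argument.

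Two small points are worth flagging. First, in part $(4)$ your detour to establish that $(G(z_\ell))_\ell$ is bounded cites ``condition $(ii)$'' in the form of the $(EX)$-type coercivity hypothesis; but that hypothesis is not among the conditions of Theorem \ref{maintheoremlipschitzc1} (its condition $(ii)$ is $Y\subseteq X$). The detour is also unnecessary: $G$ is bounded on $E^{*}$ by $3K$ by your own item $(3)$, and in any case condition $(iv)$ of Theorem \ref{maintheoremlipschitzc1} only requires that $(P_X(x_\ell))_\ell$ be bounded, not $(G(z_\ell))_\ell$. Second, in part $(1)$ the deduction ``$v\in\partial m(y_0)$ implies $v-G(x_0)\in Y$'' is not an immediate consequence of Lemma \ref{lemmalipschitzconstantdecomposition}(2) alone; it rests on the Claim preceding Lemma \ref{differentiabiltyclosurec^*} (every $\eta\in\partial m(x)$ satisfies $\eta-v\in X_m=Y$), which is precisely what underlies Lemma \ref{lemmalipschitzconstantdecomposition}(3). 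The conclusion you need is exactly Lemma \ref{lemmalipschitzconstantdecomposition}(3), so it would be cleaner simply to invoke it. Neither issue affects the correctness of the final result.
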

\begin{proof}
\item[$(1)$] By Lemma \ref{lemmalipschitzconstantdecomposition}, there are points $x_1,\ldots, x_k \in E $ with $Y = \textrm{span} \lbrace G(x_j)-v \: : \: j=1, \ldots, k \rbrace,$ where $v$ is that of \eqref{firstdecompositionminimal}. Since the vectors $w_1,\ldots,w_{d-k}$ are linearly independent, the definitions of \eqref{definitionnewjets} show that
$$
\textrm{span} \lbrace G(q_j)-v   \: : \: j=1, \ldots, d-k \rbrace =\textrm{span} \lbrace ( \varepsilon \: \alpha ) w_j   \: : \: j=1, \ldots, d-k \rbrace  = X \cap Y^{\perp}.
$$ 
We thus have that
$$
X= \textrm{span}\lbrace G(x_1)-v, \ldots, G(x_k)-v, G(q_1)-v, \ldots , G(q_{d-k})-v \rbrace.
$$
For every two points $x,y\in E^*,$ we can write
$$
G(x)-G(y)=(G(x)-v)-(G(y)-v),
$$
but notice that $G(z)-v \in Y = \textrm{span} \lbrace G(x_i)-v\rbrace_{i=1}^{k}$ for every $z\in E$ and obviously $G(z)-v \in \textrm{span} \lbrace G(q_j)-v \rbrace_{j=1}^{d-k}$ if $z\in E^* \setminus  E .$ This implies that $G(x)-G(y) \in X$ for every $x,y\in E^*.$ Conversely, if $z\in E^*,$ we can write
$$
G(z)-v=(G(z)-G(x_1))+(G(x_1)-v),
$$
where the first term belongs to $\textrm{span} \lbrace  G(x)-G(y) \: : \: x,y\in E^* \rbrace$ and the second one belongs to $Y=\textrm{span} \lbrace G(x)-G(y)   \: : \: x,y\in E \rbrace.$ We conclude that $X =  \textrm{span} \lbrace G(x)-G(y) \: :\: x,y\in E^*\rbrace.$ 

\item[$(2)$] The points $q_1, \ldots, q_{d-k}$ are distinct and none of them belong to $\overline{E}.$ Because $G$ is continuous on $E, G$ is in fact continuous on $E^*.$ Condition $(i)$ of Theorem \ref{maintheoremlipschitzc1} together with Lemma \ref{lemmadatasarecompatible} tell us that 
$$
f(x) \geq f(y)+ \langle G(y), x-y \rangle , \quad \text{for all} \quad x,y \in E^*.
$$
\item[$(3)$] From \eqref{definitionnewjets}, $G(q_j)=v+ ( \varepsilon \: \alpha ) w_j,$ for $j=1, \ldots, d-k.$ Now Lemma \ref{lemmalipschitzconstantdecomposition} tells us that $|v| \leq K$ and $\alpha \leq 2K,$ where $K$ denotes $\sup_{y\in E}|G(y)|.$ Since $\varepsilon \in (0,1)$ and the vectors $w_j$'s have norm equal to $1,$ we can write $|G(p_j)| \leq |v| + \alpha \leq 3K.$
\item[$(4)$] Suppose that$ (x_\ell)_\ell, (z_\ell)_\ell$ are sequences in $E^*$ such that $(P_X( x_\ell))_k$ is bounded and 
$$
\lim_{\ell\to\infty} \left( f(x_\ell)-f(z_\ell)- \langle G(z_\ell), x_\ell-z_\ell \rangle \right)=0.
$$ In view of Lemma \ref{lemmadatasarecompatible}, it is immediate that there exists $\ell_0$ such that either there is some $1\leq j \leq d-k$ with $x_\ell=z_\ell =q_j$ for all $\ell \geq \ell_0$ or else $x_\ell,z_\ell \in E$ for all $\ell \geq \ell_0.$ In the first case, the conclusion is trivial. In the second case, $\lim_{\ell\to\infty} |G(x_\ell)-G(z_\ell)|=0$ follows from condition $(iv)$ of Theorem \ref{maintheoremlipschitzc1}.
\end{proof}

\bigskip

We now define $m^*(x) = \sup _{y\in E^*} \lbrace f(y)+\langle G(y), x-y \rangle \rbrace$ for every $x\in E^*.$ We already know that $X_{m^{*}}= \textrm{span} \lbrace G(x)-G(y) \: : \: x,y \in E^* \rbrace.$ From Lemma \ref{lemmapropertiesnewjet}, $X_{m^{*}}=X.$ The function $m^*$ is convex and $m^{*}=f$ on $E^*.$ Also, for every $x\in E^*,$ we have that $G(x) \in \partial m^{*}(x)$ and, by virtue of Lemma \ref{lemmapropertiesnewjet}, $m^{*}$ is $3 K$-Lipschitz on $\R^n.$  The function $m^*$ has the decomposition
\begin{equation}\label{seconddecompositionminimal}
m^*= c^* \circ P_X + \langle v^*, \cdot \rangle \quad \text{on} \quad \R^n,
\end{equation}
where $c^*: X \to \R$ is convex and coercive on $X$, and $v^* \in \R^n.$ With the same proof as that of Lemma \ref{lemmalipschitzconstantdecomposition}(2), we see that $v^* \in \partial m^*(z_0)$ for some $z_0\in X,$ the function $c^*$ is $6 K$-Lipschitz and $|v^*| \leq 3 K.$ We study the differentiability of $c^*$ in the following Lemma, which follows from the corresponding result of the general (not necessarily Lipschitz) case.

\begin{lemma} \label{differentiabiltyclosurec^*lipschitz}
The function $c^*$ is differentiable on $\overline{P_X(E^*)}$, and, if $y\in P_X(E^*)$, then $ \nabla c^* (y)= G(x)-v^*$, where $x\in E^*$ is such that $P_X( x) = y_.$
\end{lemma}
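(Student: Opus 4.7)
The plan is to reduce the statement directly to Lemma \ref{differentiabiltyclosurec^*}, whose proof applies almost verbatim once we check that the extended jet $(f,G)$ on $E^*$, together with the associated minimal convex extension $m^*$ and its decomposition $m^* = c^* \circ P_X + \langle v^*, \cdot \rangle$, satisfies all the ingredients used there. First I would observe that Lemma \ref{lemmapropertiesnewjet} already provides exactly the analogues of what Lemma \ref{conditioncw1forE*} gave for the general case: the jet $(f,G)$ on $E^*$ is monotone, $G$ is continuous (in fact bounded by $3K$), and for every pair of sequences $(x_\ell)_\ell, (z_\ell)_\ell$ in $E^*$ with $(P_X(x_\ell))_\ell$ bounded,
$$
\lim_{\ell\to\infty}\bigl(f(x_\ell)-f(z_\ell)-\langle G(z_\ell), x_\ell-z_\ell\rangle\bigr)=0 \;\Longrightarrow\; \lim_{\ell\to\infty}|G(x_\ell)-G(z_\ell)|=0.
$$
Also, Lemma \ref{lemmapropertiesnewjet}(1) gives $X=\textrm{span}\{G(x)-G(y):x,y\in E^*\}$, which together with the argument of Lemma \ref{subspaceminimal} (applied to $m^*$) yields $X_{m^*}=X$, so that the decomposition \eqref{seconddecompositionminimal} is indeed the canonical one in the sense of Theorem \ref{decomposition theorem}. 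Note that condition $(ii)$ of Theorem \ref{MainTheorem1WithPThenP} is vacuous here because $G$ is bounded on $E^*$.

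Next I would run the same argument as in the proof of Lemma \ref{differentiabiltyclosurec^*}, with $(c^*, m^*, v^*, E^*)$ in place of $(c, m, v, E^*)$. Suppose $c^*$ fails to be differentiable at some $y_0\in\overline{P_X(E^*)}$. By convexity one finds $h_\ell\in X$ with $|h_\ell|\searrow 0$ and $\varepsilon>0$ so that $c^*(y_0+h_\ell)+c^*(y_0-h_\ell)-2c^*(y_0)\geq \varepsilon|h_\ell|$, and then sequences $x_\ell\in E^*$ with $P_X(x_\ell)\to y_0$, together with almost-maximizing sequences $z_\ell, \widetilde z_\ell \in E^*$ for $m^*(y_0+h_\ell)$ and $m^*(y_0-h_\ell)$. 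The key point in the general proof was that $(G(z_\ell))_\ell$ and $(G(\widetilde z_\ell))_\ell$ are bounded; in the present Lipschitz setting this is immediate from Lemma \ref{lemmapropertiesnewjet}(3), which in fact bypasses the coercivity-type argument used previously. The same algebraic manipulation, using $G(z_\ell)-v^*\in X$ (a consequence of the analogue of the earlier Claim applied to $m^*$) and the continuity of $c^*$, then produces
$$
\lim_{\ell\to\infty}\bigl(f(x_\ell)-f(z_\ell)-\langle G(z_\ell), x_\ell-z_\ell\rangle\bigr)=0,
$$
and similarly with $\widetilde z_\ell$. Invoking Lemma \ref{lemmapropertiesnewjet}(4) gives $|G(z_\ell)-G(\widetilde z_\ell)|\to 0$, which contradicts the lower bound $\varepsilon$ coming from the defining inequality for $h_\ell$.

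Finally, for the identification $\nabla c^*(y)=G(x)-v^*$ whenever $y=P_X(x)$ with $x\in E^*$, I would argue exactly as at the end of the proof of Lemma \ref{differentiabiltyclosurec^*}: the analogue of \eqref{derivativesc*inX} shows that $G(x)-v^*\in\partial c^*(P_X(x))$, and once we know $c^*$ is differentiable at $P_X(x)$, the subdifferential is a singleton, so it must equal $\nabla c^*(y)$.

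\textbf{Main obstacle.} There is essentially no new difficulty: the whole point of Lemma \ref{lemmapropertiesnewjet} and the careful choice \eqref{definitionnewjets} of the auxiliary jets at $q_j$ was to make the hypotheses of the general differentiability lemma available in the Lipschitz setting. The only step requiring slight care is verifying that the boundedness of $(G(z_\ell))_\ell$ (used to cancel the relevant terms in the sum producing the limit above) is now a free consequence of boundedness of $G$ on $E^*$, rather than of condition $(ii)$ of Theorem \ref{MainTheorem1WithPThenP}; but this is precisely Lemma \ref{lemmapropertiesnewjet}(3), so no coercivity argument is needed here.
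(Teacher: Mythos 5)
Your proposal is correct and takes exactly the approach the paper intends: the paper gives no explicit proof of this lemma, stating only that it ``follows from the corresponding result of the general (not necessarily Lipschitz) case,'' and you have correctly identified that Lemma \ref{lemmapropertiesnewjet} supplies precisely the ingredients (monotonicity on $E^*$, continuity and boundedness of $G$ on $E^*$, the $CW^1$-type condition, and $X=\textrm{span}\{G(x)-G(y):x,y\in E^*\}$) that let the proof of Lemma \ref{differentiabiltyclosurec^*} run verbatim with $(m^*,c^*,v^*)$ in place of $(m,c,v)$, with boundedness of $(G(z_\ell))_\ell$ now a free consequence of Lemma \ref{lemmapropertiesnewjet}(3) rather than of condition $(ii)$.
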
 

\medskip

\subsection{Construction of the extension}

\begin{lemma}\label{extensionconvexcoercivelipschitz}
Let $h:X \to \R$ be a convex, Lipschitz and coercive function such that $h$ is differentiable on a closed subset $A$ of $X.$ There exists $H \in C^1(X)$ convex,  Lipschitz and coercive such that $H=h$ and $\nabla H = \nabla h$ on $A.$ Moreover, $H$ can be taken so that $\lip(H) \leq M \lip(h),$ where $M=M(n)>0$ is a constant only depending on $n$.
\end{lemma}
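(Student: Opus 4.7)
My plan is to retrace the proof of Lemma~\ref{extensionconvexcoercive} step by step, but keep explicit linear control on $L:=\lip(h)$ at every stage.

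First, apply the standard Whitney $C^{1}$-extension theorem to the jet $(h,\nabla h)|_{A}$. Because $h$ is $L$-Lipschitz on $X$ and $|\nabla h|\le L$ on $A$ (and $\nabla h$ is continuous on $A$, being the gradient of a convex function), a careful accounting in Whitney's partition-of-unity construction yields an extension $\widetilde h\in C^{1}(X)$ with $\widetilde h=h$, $\nabla\widetilde h=\nabla h$ on $A$ and $\lip(\widetilde h)\le C_{1}(n)\,L$.

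Second, replace the quadratic term $2d(\cdot,A)^{2}$ used in the original argument by the Lipschitz auxiliary
$$
a(x):=L\,\min\{d(x,A),\,d(x,A)^{2}\},
$$
which is $L$-Lipschitz, vanishes on $A$, and satisfies $a(x)\le L|x-x_{0}|^{2}$ whenever $x_{0}\in A$ and $|x-x_{0}|\le 1$ (so $a$ is differentiable on $A$ with vanishing gradient). Set $\phi(x):=|h(x)-\widetilde h(x)|+2a(x)$; this is $C_{2}(n)L$-Lipschitz, non-negative, positive off $A$, and differentiable with zero gradient on $A$. Now produce $\varphi\in C^{\infty}(X\setminus A)$ satisfying \emph{simultaneously}
$$
|\varphi-\phi|\le a\text{ on }X\setminus A\qquad\text{and}\qquad\lip(\varphi)\le C_{3}(n)\,L,
$$
by mollifying $\phi$ at each point $x\in X\setminus A$ with a smooth bump of radius a small fraction of $\min\{d(x,A),\,a(x)/\lip(\phi)\}$ (convolution with a probability density does not enlarge the Lipschitz constant, and the pointwise error is controlled by $\lip(\phi)$ times the radius). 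Extending by $\widetilde\varphi=0$ on $A$, the argument of Claim~\ref{claimdifferentiabilityvarphi} still shows $\widetilde\varphi$ differentiable everywhere on $X$ with $\nabla\widetilde\varphi=0$ on $A$, while $\lip(\widetilde\varphi)\le C_{3}(n)\,L$.

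Third, set $g:=\widetilde h+\widetilde\varphi$; as in the original proof $g$ is differentiable on $X$ with $g=h$, $\nabla g=\nabla h$ on $A$ and $g\ge\widetilde h+\phi-a\ge h$ on $X$, hence coercive, and $\lip(g)\le C_{4}(n)\,L$. Finally let $H:=\mathrm{conv}(g)$. The Kirchheim--Kristensen theorem gives $H\in C^{1}(X)$; the sandwich $h\le H\le g$ (the left inequality because $h$ is convex and $\le g$) forces $H=h$ on $A$ and keeps $H$ coercive, and Lemma~\ref{differentiability criterion for convex functions} applied to the pair $(h,H)$ gives $\nabla H=\nabla h$ on $A$. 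Moreover $\lip(\mathrm{conv}(g))\le\lip(g)$: given $x,y\in X$ and an $\varepsilon$-optimal Carath\'eodory decomposition $x=\sum\lambda_{i}x_{i}$, the translate $y=\sum\lambda_{i}(x_{i}+(y-x))$ yields $\mathrm{conv}(g)(y)\le\sum\lambda_{i}g(x_{i})+\lip(g)|y-x|\le\mathrm{conv}(g)(x)+\lip(g)|y-x|+\varepsilon$. Hence $\lip(H)\le C_{4}(n)\,L$, and $M(n):=C_{4}(n)$ works.

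The main obstacle is the smoothing step: achieving \emph{both} the pointwise error $|\varphi-\phi|\le a$ (needed to keep $g\ge h$ and to preserve the vanishing of $\nabla\widetilde\varphi$ on $A$) and a Lipschitz bound comparable to $\lip(\phi)$. A careless appeal to Whitney's approximation theorem gives only the first. Past this, the proof is essentially a mechanical tracking of constants through the argument of Lemma~\ref{extensionconvexcoercive}.
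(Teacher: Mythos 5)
Your proposal is correct and follows essentially the same route as the paper: Whitney extension with Lipschitz control, a Lipschitz truncation of the quadratic auxiliary near $A$ (the paper uses $\theta_{\varepsilon}(d(\cdot,A))$ where you use $L\min\{d,d^{2}\}$, which serves the identical purpose), smooth approximation with simultaneous pointwise and Lipschitz control, extension by zero on $A$, and finally the convex envelope plus Kirchheim--Kristensen. The only slip is cosmetic: $L\min\{d,d^{2}\}$ is $2L$-Lipschitz rather than $L$-Lipschitz, but since only a dimensional multiple of $\lip(h)$ is required this is harmless.
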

\begin{proof}
Since $h$ is convex, its gradient $ \nabla h$ is continuous on $A$. Then, for all $x,y\in A,$ we have
$$
0 \leq \frac{h(x)-h(y)-\langle \nabla h(y), x-y \rangle}{|x-y|} \leq \Big \langle \nabla h(x)-\nabla h(y), \frac{x-y}{|x-y|} \Big \rangle \leq |\nabla h(x)-\nabla h(y)| ,
$$
where the last term tends to $0$ as $|x-y| \to 0$ uniformly on $x,y\in K$ for every compact subset $K$ of $A.$ This shows that the pair $( h, \nabla h)$ defined on $A$ satisfies the conditions of the classical Whitney Extension Theorem for $C^1$ functions. Therefore, there exists a function $\widetilde{h} \in C^1(X)$ such that $\widetilde{h}= h$ and $\nabla \widetilde{h} = \nabla h$ on $A.$ In fact, we can arrange $\lip(\widetilde{h}) \leq \kappa \lip(h),$ where $\kappa=\kappa(n)>0$ is a constant only depending on $n$ (see 
\cite[Claim 2.3]{AzagraMudarra2017PLMS}).
Let us denote $L=\textrm{Lip}(h)$. 

\medskip

For each $\varepsilon>0$, let $\theta_{\varepsilon}:\R\to\R$ be defined by
$$
\theta_{\varepsilon}(t)   =  \left\lbrace
	\begin{array}{ccl}
	0 & \mbox{ if } t\leq 0 \\
	t^2 & \mbox{ if } t\leq\frac{L+\varepsilon}{2} \\
	(L+\varepsilon)\left(t-\frac{L+\varepsilon}{2}\right) +\left(\frac{L+\varepsilon}{2}\right)^2  & \mbox{ if }  t>\frac{L+\varepsilon}{2}
	\end{array}
	\right.
$$
Observe that $\theta_{\varepsilon}\in C^{1}(\R)$, $\textrm{Lip}(\theta_{\varepsilon})=L+\varepsilon$. Now set
$$
\Phi_{\varepsilon}(x)=\theta_{\varepsilon}\left(d(x, A)\right),
$$
where $d(x, A)$ stands for the distance from $x$ to $A$, notice that 
$\Phi_{\varepsilon}(x)=d(x,A)^2$ on an open neighborhood of $A$,
and
 define
$$
H_{\varepsilon}(x)=|\widetilde{h}(x)-h(x)|+ 2\Phi_{\varepsilon}(x).
$$ 
Note that $\textrm{Lip}(\Phi_{\varepsilon})=\textrm{Lip}(\theta_{\varepsilon})$ because $d(\cdot, A)$ is $1$-Lipschitz, and therefore
\begin{equation}
\textrm{Lip}(H_{\varepsilon})\leq \textrm{Lip}(\widetilde{h}) + L+ 2(L+\varepsilon)\leq (3+\kappa)L +2\varepsilon.
\end{equation}
\begin{claim}
$H_{\varepsilon}$ is differentiable on $A$, with $\nabla H_{\varepsilon}(x)=0$ for every $x\in A$.
\end{claim}
\begin{proof}
Same as that of Claim \ref{claimdifferentiabilityphi}.
\end{proof}

Now, because $\Phi_{\varepsilon}$ is continuous and positive on $X \setminus A$, by using mollifiers and a partition of unity , one can construct a function $\varphi_{\varepsilon}\in C^{\infty}(X \setminus A)$ such that
\begin{equation}
|\varphi_{\varepsilon}(x)-H_{\varepsilon}(x)|\leq \Phi_{\varepsilon}(x) \,\,\, \textrm{ for every } x\in X \setminus A,
\end{equation}
and 
\begin{equation}
\textrm{Lip}(\varphi_{\varepsilon})\leq \textrm{Lip}(H_{\varepsilon})+\varepsilon
\end{equation}
(see for instance \cite[Proposition 2.1]{GW} for a proof in the more general setting of Riemannian manifolds, or \cite{AFLR} even for possibly infinite-dimensional Riemannian manifolds).
Let us define $\widetilde{\varphi}=\widetilde{\varphi_{\varepsilon}}:X \to\R$ by
$$
\widetilde{\varphi}(x)=  \left\lbrace
	\begin{array}{ccl}
	\varphi_{\varepsilon}(x) & \mbox{ if } x\in X \setminus A \\
	0  & \mbox{ if }  x\in A.
	\end{array}
	\right.
$$
\begin{claim}
The function $\widetilde{\varphi}$ is differentiable on $X$, and it satisfies $\nabla\widetilde{\varphi}(x_0)=0$ for every $x_0\in A$.
\end{claim}
\begin{proof}
Same as Claim \ref{claimdifferentiabilityvarphi}.
\end{proof}

Note also that
\begin{equation}\label{lip constant of varphi}
\textrm{Lip}(\widetilde{\varphi})=\textrm{Lip}(\varphi_{\varepsilon})\leq \textrm{Lip}(H_{\varepsilon})+\varepsilon\leq (3+\kappa)L +3\varepsilon.
\end{equation}

Next we define 
\begin{equation}
g=g_{\varepsilon}:=\widetilde{h}+\widetilde{\varphi}.
\end{equation}
The function $g$ is differentiable on $X$, and coincides with $h$ on $A$. Moreover, we also have $\nabla g=\nabla h$ on $A$ (because $\nabla\widetilde{\varphi}=0$ on $A$). And, for $x\in X \setminus A$, we have
$$
g(x)\geq \widetilde{h}(x)+H_\varepsilon(x)-\Phi_{\varepsilon}(x)=\widetilde{h}(x)+|h(x)-\widetilde{h}(x)|+\Phi_{\varepsilon}(x)\geq h(x) + \Phi_{\varepsilon}(x).
$$
This shows that $g\geq h,$ which in turn implies that $g$ is coercive. Also, notice that according to \eqref{lip constant of varphi} and the definition of $g$, we have
\begin{equation}
\textrm{Lip}(g)\leq \textrm{Lip}(\widetilde{h}) +\textrm{Lip}(\widetilde{\varphi})\leq \kappa L +(3+\kappa)L +3\varepsilon = ( 3+2\kappa) L+ 3\varepsilon.
\end{equation}

If we define $H=\textrm{conv}(g)$ we thus get that $H$ is convex on $X$ and $F\in C^{1}(X)$, with 
\begin{equation}\label{lispchitz constant of F}
\textrm{Lip}(H)\leq\textrm{Lip}(g)\leq ( 3+2\kappa) L+ 3\varepsilon
\end{equation}
Thus, we can take $\varepsilon$ small enough so that $\textrm{Lip}(H) \leq 2 ( 3+2\kappa) L.$ Finally, we know (by an already familiar argument) that $H=h$ and $\nabla H= \nabla h$ on $A.$ Also, because $h$ is a coercive convex function, we have that $H \geq h$ is also coercive. This completes the proof of Lemma \ref{extensionconvexcoercivelipschitz}.
\end{proof}

\bigskip

Now we are able to finish the proof of Theorem \ref{maintheoremlipschitzc1}. Setting $A: = \overline{P_X(E^*)},$ we see from Lemma \ref{differentiabiltyclosurec^*lipschitz} that $c^*$ is differentiable on $A.$ Moreover, since $c^* :X \to \R$ is convex and coercive on $X,$ Lemma \ref{extensionconvexcoercivelipschitz} provides us with a Lipschitz, convex and coercive function $H$ of class $C^1(X)$ such that $(H, \nabla H)=(c^*, \nabla c^*)$ on $A$ and
$$
\lip(H) \leq M \lip(c^*) \leq 6 M K,
$$ 
{where $M>0$ is a dimensional constant. Recall that $K$ denotes $\sup_{y\in E} |G(y)|.$ We next show that $F:= H \circ P_X + \langle v^*, \cdot \rangle$ is the desired extension of $(f,G).$ Since $H$ is $C^1(X)$ and convex, it is clear that $F$ is $C^1(\R^n)$ and convex as well. Because $H$ is coercive on $X,$ it follows (using Theorem \ref{decomposition theorem}) that $X_F=X.$ Also, since $H(y)=c^*(y)$ for $y\in P_X(E),$ we obtain from \eqref{seconddecompositionminimal} that
$$
F(x)= H(P_X(x))+\langle v^*, x \rangle = c^* (P_X(x))+\langle v^*, x \rangle=m^*(x)=f(x).
$$
Besides, from the second part of Lemma \ref{differentiabiltyclosurec^*lipschitz}, we have, for all $x\in E$, that
$$
\nabla F(x)= \nabla H (P_X(x)) + v^* = G(x)-v^* + v^* = G(x).
$$
Finally, note that 
$$
\lip(F) \leq \lip(H)+ |v^*| \leq 6 M K + 3 K = (6 M + 3) K = (6 M +3 ) \sup_{y\in E} |G(y)|.
$$
The proof of Theorem \ref{maintheoremlipschitzc1} is complete.

\section{Proof of  Theorem \ref{main geometric corollary}}

Let us assume first that there exists such a convex body $W$, and let us check that $N$ and $P=P_X: \R^n \to X$} satisfy conditions $(1)-(4)$. Define $F:\R^n\to\R$ by
$$
F(x)=\theta\left( \mu_{W}(x) \right), \quad x\in \R^n,
$$
where $\theta : \R \to [0, +\infty)$ is a $C^1$ Lipschitz convex function with $\theta(t) = t^2$ whenever $|t| \leq 2$ and $\theta(t)=at$ whenever $|t| \geq 2,$ for a suitable $a>0$. We have that $\partial W=F^{-1}(1)$, and in particular $F=1$ on $E$; besides $$N(x)=\frac{\nabla F(x)}{|\nabla F(x)|} \textrm{ for all } x\in E.
$$
It is clear that $F$ is a Lipschitz convex function of class $C^{1}(\R^n)$. Moreover, by elementary properties of the Minkowski functional and the fact that $\nabla F(0)=0,$ we have
$$
X_F=\textrm{span}\lbrace \nabla F(x) \: : \: x \in \R^n \rbrace =\textrm{span}\lbrace \nabla \mu_W(x) \: : \: x \in \partial W \rbrace= \textrm{span}\lbrace n_W(x)  \: : \: x\in \partial W \rbrace=X.
$$
Therefore $(F, \nabla F)$ satisfies conditions $(i)-(iv)$ of Theorem \ref{maintheoremlipschitzc1} on the set $E^{*}:=E\cup \{0\}$ with projection $P=P_X: \R^n \to X$. Then condition $(1)$ follows directly from $(i)$ (or from the fact that $W$ is convex and $N$ is normal to $\partial W$). In order to check $(2)$, take two sequences $(x_k)_k$, $(z_k)_k$ contained in $E$ with $(P(x_k))_k$ bounded. Now suppose that
$$
\lim_{k\to\infty}\langle N(z_k), x_k-z_k\rangle=0.
$$
Then we also have, using that $F(x_k)=1=F(z_k)$, that
$$
\lim_{k\to\infty}\left( F(x_k)-F(z_k)-\langle \nabla F(z_k), x_k-z_k\rangle \right)=0,
$$ 
and according to $(i)$ of Theorem \ref{maintheoremlipschitzc1} we obtain
\begin{equation}\label{difference of gradients go to 0}
\lim_{k\to\infty}|\nabla F(x_k)- \nabla F(z_k)|=0.
\end{equation}
Suppose, seeking a contradiction that we do not have $\lim_{k\to\infty}| N(x_k)-N(z_k)|=0.$ Then, after possibly passing to subsequences, we may assume that there exists some $\varepsilon>0$ such that
$$
|N(x_k)-N(z_k)|\geq \varepsilon\: \textrm{ for all } k\in\N.
$$
Since $F(x_k)=1, \:F(0)=0$ and $\nabla F(x_k) \in X,$ the convexity of $F$ yields 
$$
0 \leq F(0)-F(x_k)- \langle \nabla F(x_k), -x_k \rangle = -1+\langle \nabla F(x_k), x_k \rangle = -1 + \langle \nabla F(x_k), P(x_k) \rangle
$$
and this shows that $\inf_k |\nabla F(x_k)|>0.$ Thanks to \eqref{difference of gradients go to 0}, we have $\inf_k |\nabla F(z_k)|>0$ too and both $\left( \nabla F(x_k) \right)_k$ and $\left( \nabla F(z_k) \right)_k$ are bounded above because $F$ is Lipschitz. So we may assume, possibly after extracting subsequences again, that $\nabla F(x_k)$ and $\nabla F(z_k)$ converge, respectively, to vectors $\xi, \eta\in\R^n\setminus\{0\}$. By \eqref{difference of gradients go to 0} we then get
$\xi=\eta$, hence also
$$
\varepsilon\leq |N(x_k)-N(z_k)|=\left| \frac{\nabla F(x_k)}{|\nabla F(x_k)|} -\frac{\nabla F(z_k)}{|\nabla F(z_k)|} \right|\to \left| \frac{\xi}{|\xi|}-\frac{\eta}{|\eta|}\right|=0,
$$
a contradiction.

Let us now check $(3)$. Since $0\in\textrm{int}(W)$, we can find $r>0$ such that $B(0, r)\subset W$. Let $y\in E$. If $y$ is parallel to $N(y)$, then $\langle N(y), y\rangle=|y|\geq r.$ Otherwise, by convexity of $W$, the triangle of vertices $0$, $r N(y)$ and $y$, with angles $\alpha, \beta, \gamma$ at those vertices, is contained in $W$. So is the triangle of vertices $0$, $r N(y)$, $p$, where $p$ is the intersection of the line segment $[0, y]$ with the line $L=\{rN(y)+tv : t\in\R\}$, where $v$ is perpendicular to $N(y)$ in the plane $\textrm{span}\{y, N(y)\}$. Then we have that $|p|<|y|$, and $|p|\cos\alpha=r$, hence
$$
\langle N(y), y\rangle=|y|\cos\alpha>|p|\cos\alpha=r>0.
$$
Finally condition $(4)$ follows immediately from $(iii)$ of Theorem \ref{maintheoremlipschitzc1} applied with $E^{*}=E\cup\{0\}$ (and from the fact that $\nabla F (0)=0$).

\medskip

Conversely, assume that $N:E\to\mathbb{S}^{n-1}$ and $P=P_X: \R^n \to X$ satisfy $(1)-(4)$, and let us construct a suitable $W$ with the help of Theorem \ref{maintheoremlipschitzc1}. Choose $r$ such that
\begin{equation}\label{there is an r such that}
0<r<\inf_{y\in E}\langle N(y), y\rangle,
\end{equation}
and define $E^{*}=E\cup\{0\}$, $f:E^{*}\to\R$, $G:E^{*}\to\R^n$ by
$$
f(0)=0, f(x)=1 \textrm{ if } x\in E; \,\,\, G(0)=0, G(x)=\frac{2}{r}N(x) \textrm{ if } x\in E.
$$
It is clear that condition $(3)$ implies that $\textrm{dist}(0, E)>0$, hence the continuity of $G$ on $E^{*}$ is obvious. As for checking that $$f(x)-f(y)-\langle G(y), x-y\rangle\geq 0 \: \textrm{ for all } x, y\in E^{*},$$ the only interesting case is that of $x=0$, $y\in E$, for which we have
$$
f(0)-f(y)-\langle G(y), x-y\rangle=-1+\frac{2}{r}\langle N(y), y\rangle\geq -1+2=1>0.
$$
Therefore condition $(i)$ of Theorem \ref{maintheoremlipschitzc1} is fulfilled. Conditions $(ii)$ and $(iii)$ follow immediately from $(4)$. It only remains for us to check $(iv)$. As before, an a priori less trivial situation consists in taking $x_k=0$, $(z_k)_k\subseteq E$. Note that $(G(z_k))_k$ is always bounded. Assuming that
$$
\lim_{k\to\infty} (f(x_k)-f(z_k)-\langle G(z_k), x_k-z_k\rangle)=0,
$$
we get 
$
\lim_{k\to\infty}\langle G(z_k), z_k\rangle=1,
$
which implies
$$
\lim_{k\to\infty}\langle N(z_k), z_k\rangle=\frac{r}{2},
$$
contradicting \eqref{there is an r such that}. Therefore this situation cannot occur. The rest of cases are immediately dealt with.

Thus we may apply Theorem \ref{maintheoremlipschitzc1} in order to find a convex function $F\in C^{1}(\R^n)$ such that $(F, \nabla F)$ extends the jet $(f,G)$, and $X_F=X$. We then define $W=F^{-1}\left( (-\infty, 1] \right)$. It is easy to check that $W$ is a (possibly unbounded) convex body of class $C^1$ such that $E\subset\partial W$, $0\in\textrm{int}(W)$, $N(x)=n_{W}(x)$ for all $x\in E.$ Moreover, because $F(0)=0$ and $\nabla F(0)=0,$ one can see from the proof of Theorem \ref{maintheoremlipschitzc1} that 
$$
X= \textrm{span} \left( \nabla F(E) \cup \lbrace \nabla F(q_1), \ldots, \nabla F(q_{d-\ell}) \rbrace \right),
$$
where the $q_j$'s are such that $F(q_j) \geq 1$ (see Lemma \ref{lemmadatasarecompatible}). In particular, the $q_j$'s do not belong to $\textrm{int}(W)$ and then $\mu_W(q_j) >0$ for every $j=1, \ldots, d-\ell.$ This implies that
$$
\textrm{span}(n_W(\partial W ))  = \textrm{span} \lbrace \nabla F(x) \: : \: x \in \R^n \setminus \mu_W^{-1}(0) \rbrace \supseteq \textrm{span} \left( \nabla F(E) \cup \lbrace \nabla F(q_1), \ldots, \nabla F(q_{d-\ell}) \rbrace \right) =X.
$$
Since $X_F=X,$ this argument shows that $\textrm{span}(n_W(\partial W ))=X.$
\qed

\medskip

\section*{Acknowledgement}
The authors wish to thank the referee for many suggestions that improved the exposition, and for the statement of the stronger version of Corollary \ref{MainCorollaryWithEssentiallyCoerciveExtensions} included in the final version of this paper.

\medskip



\begin{thebibliography}{}

\bibitem{Azagra}
D. Azagra, {\em Global and fine approximation of convex functions}. Proc. Lond. Math. Soc. (3) 107 (2013), no. 4, 799--824.

\bibitem{AFLR}
D. Azagra, J. Ferrera, F. L\'opez-Mesas and Y. Rangel, {\em Smooth approximation of Lipschitz functions on Riemannian manifolds}.  J. Math. Anal. Appl. 326 (2007), 1370--1378.

\bibitem{AzagraHajlasz}
D. Azagra and P. Haj\l asz, {\em Lusin-type properties of convex functions}, preprint.

\bibitem{AzagraMudarraExplicitFormulas}
D. Azagra, E. Le Gruyer, and C. Mudarra, {\em Explicit formulas for $C^{1,1}$ and $C^{1, \omega}_{\textrm{conv}}$ extensions of 1-jets in Hilbert and superreflexive spaces}, J. Funct. Anal. 274 (2018), 3003-3032.

\bibitem{AzagraMudarra2017PLMS} D. Azagra and C. Mudarra, {\em Whitney Extension Theorems for convex functions of the classes $C^1$ and $C^{1, \omega}$}, Proc. London Math. Soc. 114 (2017),  no.1, 133--158.

\bibitem{BenoistUrruty}
J. Benoist and J.-B. Hiriart-Urruty, {\em What is the subdifferential of the closed convex hull of a function?}, SIAM J. Math. Anal. 27 (6) (1996) 1661--1679.

\bibitem{BrudnyiShvartsman}
Y. Brudnyi, P. Shvartsman, {\em Whitney's extension problem for multivariate $C^{1, \omega}$-functions}. Trans. Am. Math. Soc. 353 (2001), 2487--2512.

\bibitem{CepedelloRegularization}
M. Cepedello, {\em On regularization in superreflexive Banach spaces by infimal convolution formulas}, Studia Math. 129 (1998), 265--284.

\bibitem{DaniilidisLeGruyerEtAl}
A. Daniilidis, M. Haddou, E. Le Gruyer, and O. Ley, {\em Explicit formulas for $C^{1,1}$ Glaeser-Whitney extensions of 1-fields in Hilbert spaces}, Proc. Amer. Math. Soc. 146 (2018), 4487-4495.

\bibitem{Fefferman2005}
C. Fefferman, {\em A sharp form of Whitney's extension theorem}. Ann. of Math. (2) 161 (2005), no. 1, 509--577.

\bibitem{Fefferman2006}
C. Fefferman, {\em Whitney's extension problem for $C^{m}$}. Ann. of Math. (2) 164 (2006), no. 1, 313--359.

\bibitem{FeffermanSurvey}
C. Fefferman, {\em Whitney's extension problems and interpolation of data}. Bull. Amer. Math. Soc. (N.S.) 46 (2009), no. 2, 207--220.

\bibitem{FeffermanIsraelLuli2} C. Fefferman, A. Israel, G.K Luli, {\em Interpolation of data by smooth nonnegative functions}, Rev. Mat. Iberoam. 33 (2017), no. 1, 305--324. 

\bibitem{FeffermanIsraelLuli1}
C. Fefferman, A. Israel, G.K Luli, {\em Finiteness principles for smooth selection}. Geom. Funct. Anal. 26 (2016), no. 2, 422--477.

\bibitem{GhomiJDG2001}
M. Ghomi, {\em Strictly convex submanifolds and hypersurfaces of positive curvature}. J. Differential Geom. 57 (2001), 239--271.

\bibitem{GhomiPAMS2002}
M. Ghomi, {\em The problem of optimal smoothing for convex functions}. Proc. Amer. Math. Soc. 130 (2002) no. 8, 2255--2259.

\bibitem{GhomiBLMS2004}
M. Ghomi, {\em Optimal smoothing for convex polytopes}. Bull. London Math. Soc. 36 (2004), 483--492.

\bibitem{Glaeser}
G. Glaeser, {\em Etudes de quelques alg\`ebres tayloriennes}, J. d'Analyse 6 (1958), 1-124.

\bibitem{GriewankRabier}
A. Griewank, P.J. Rabier, {\em On the smoothness of convex envelopes}. Trans. Amer. Math. Soc. 322 (1990) 691--709.

\bibitem{GW}
R. E. Greene, and H. Wu, {\em $C\sp{\infty }$ approximations of
convex, subharmonic, and plurisubharmonic functions}, Ann. Sci.
{\'E}cole Norm. Sup. (4) 12 (1979), no. 1, 47--84.

\bibitem{JSSG}
M. Jim\'enez-Sevilla, L. S\'anchez--Gonz\'alez, {\em On smooth extensions of vector-valued functions defined on closed subsets of Banach spaces}. Math. Ann. 355 (2013), no. 4, 1201--1219.

\bibitem{KirchheimKristensen}
B. Kirchheim, J. Kristensen, {\em Differentiability of convex envelopes}. C. R. Acad. Sci. Paris S\'er. I Math. 333 (2001), no. 8, 725--728.

\bibitem{LeGruyer1}
E. Le Gruyer, {\em Minimal Lipschitz extensions to differentiable functions defined on a Hilbert space}. Geom. Funct. Anal 19(4) (2009), 1101-1118.

\bibitem{Rockafellar}
T. Rockafellar, {\em Convex Analysis}. Princeton Univ. Press, Princeton, NJ, 1970.

\bibitem{SchulzSchwartz}
K. Schulz, B. Schwartz, {\em Finite extensions of convex functions}.
Math. Operationsforsch. Statist. Ser. Optim. 10 (1979), no. 4, 501--509.

\bibitem{Stein}
E. Stein, {\em Singular integrals and differentiability properties of functions}. Princeton, University Press, 1970.

\bibitem{Whitney}
H. Whitney, {\em Analytic extensions of differentiable functions defined in closed sets}, Trans. Amer. Math. Soc. 36 (1934), 63--89.

\bibitem{MinYan}
M. Yan, {\em Extension of Convex Function}. J. Convex Anal. 21 (2014) no. 4, 965--987.


\end{thebibliography}
\end{document}